\documentclass[preprint,12pt]{elsarticle}

\usepackage[utf8]{inputenc}

\usepackage{tikz,environ,tkz-euclide, tikzscale}
\usetikzlibrary{math,arrows.meta,positioning,shapes,
fit,arrows,snakes,hobby}

\usepackage{enumitem}
\usepackage{float}

\usepackage{braket,amsfonts,amsthm}

\usepackage{array}

\usepackage[caption=false]{subfig}


\usepackage{pgfplots}
\usepackage{caption}

\newtheorem{theorem}{Theorem}[section]

\newtheorem{lemma}[theorem]{Lemma}
\newtheorem{corollary}[theorem]{Corollary}
\newtheorem{remark}[theorem]{Remark}
\newtheorem{proposition}[theorem]{Proposition}
\newtheorem{conjecture}[theorem]{Conjecture}
\newtheorem{problem}[theorem]{Problem}

\newcommand{\PSL}{{\mathrm{PSL}}}

\usepackage{algorithmic}

\usepackage{graphicx,epstopdf}


\usepackage{amsopn}

\usepackage{xspace}
\usepackage{bold-extra}
\usepackage[most]{tcolorbox}

\colorlet{texcscolor}{blue!50!black}
\colorlet{texemcolor}{red!70!black}
\colorlet{texpreamble}{red!70!black}
\colorlet{codebackground}{black!25!white!25}

 \begin{document}
\begin{frontmatter}

 \title{Decycling cubic graphs\tnoteref{grants}}
  \tnotetext[grants]{The first and the third author were supported by the grant no.
 APVV-19-0308 of Slovak Research and Development Agency.  The first author
 was partially supported by VEGA 2/0078/20. The second author was supported by the grant GA\v{C}R 19-17314J
 of the Czech Science Foundation. The third author was partially supported by
 VEGA 1/0727/22.}

\author[1,2]{Roman Nedela}
 \ead{nedela@savbb.sk}
 \address[1]{Faculty of Applied Sciences, University of West Bohemia, Pilsen Czech republic}
 \address[2]{Mathematical Institute of Slovak Academy of Sciences, Bansk\'a Bystrica, Slovakia}

\author[3]{Michaela Seifrtová}
 \ead{mikina@kam.mff.cuni.cz}
 \address[3]{Faculty of Mathematics and Physics, Charles University, Prague, Czech republic}

 \author[4]{Martin \v{S}koviera\corref{cor1}}
 \ead{skoviera@dcs.fmph.uniba.sk}
 \address[4]{Faculty of Mathematics, Pysics, and Informatics, Comenius University, Bratislava, Slovakia}
 \cortext[cor1]{Corresponding author}

\begin{abstract}
A set of vertices of a graph $G$ is said to be decycling if its
removal leaves an acyclic subgraph. The size of a smallest
decycling set is the decycling number of $G$. Generally, at
least $\lceil(n+2)/4\rceil$ vertices have to be removed in
order to decycle a cubic graph on $n$ vertices. In 1979, Payan
and Sakarovitch proved that the decycling number of a
cyclically $4$-edge-connected cubic graph of order $n$ equals
$\lceil (n+2)/4\rceil$. In addition, they characterised the
structure of minimum decycling sets and their complements. If
$n\equiv 2\pmod4$, then $G$ has a decycling set which is
independent and its complement induces a tree. If $n\equiv
0\pmod4$, then one of two possibilities occurs: either $G$ has
an independent decycling set whose complement induces a forest
of two trees, or the decycling set is near-independent (which
means that it induces a single edge) and its complement induces
a tree. In this paper we strengthen the result of Payan and
Sakarovitch by proving that the latter possibility (a
near-independent set and a tree) can always be guaranteed.
Moreover, we relax the assumption of cyclic
$4$-edge-connectivity to a significantly weaker condition
expressed through the canonical decomposition of 3-connected
cubic graphs into cyclically $4$-edge-connected ones. Our
methods substantially use a surprising and seemingly distant
relationship between the decycling number and the maximum genus
of a cubic graph.
\end{abstract}

 \begin{keyword}
 cubic graph, decycling set, feedback vertex set, cyclic
 connectivity, maximum genus
 \end{keyword}


 \end{frontmatter}


\section{Introduction}
Destroying all cycles of a graph by removing as few vertices as
possible is a natural and extensively studied problem in graph
theory and computer science, which can be traced back at least
as far as to Kirchhoff's work on spanning trees
\cite{kirchhoff}. The minimum number of vertices whose deletion
eliminates all cycles of a graph $G$ is its \textit{decycling
number}, denoted by $\phi(G)$, and the corresponding set is a
\textit{minimum decycling set}. (Here we follow the terminology
of \cite{Bau_Worm, Bei_Vandel}. In the literature, decycling
number is also known as \textit{feedback vertex number} and the
corresponding set of vertices is a \textit{feedback vertex
set}, see \cite{Speck} for instance.)

In contrast to a similar problem of eliminating cycles by
removing edges, which amounts to determining the cycle rank (or
the Betti number) $\beta(G)$ of a graph $G$, computing the
decycling number is long known to be a difficult problem. In
1972, it was proven by Karp \cite[Problem~7]{Karp72} that
finding a minimum decycling set, or in other words,
establishing the decycling number, is an NP-complete problem.
The problem remains NP-complete even when restricted to planar
graphs, bipartite graphs, or perfect graphs
\cite{Garey_Johnson, garey_johnson_stock}. On the other hand,
it becomes polynomial for a number of families, including
permutation graphs (Liang \cite{liang}), interval graphs,
cocomparability graphs,
and also for cubic graphs (Li and Liu \cite{liu_li}, Ueno et
al. \cite{U_K_Got}).

Evaluating the decycling number of a cubic graph is, in a
sense, well understood. It is not difficult to see that
$\phi(G)\ge\lceil(n+2)/4\rceil$ for every cubic graph $G$ of
order $n$. In 1988, Speckenmayer \cite{Speck} derived the
equation $\phi(G)=n/2-z(G)+1$, where the parameter $z(G)$
stands for the size of a maximum nonseparating independent set.
Surprisingly, the parameter $z(G)$ has an interpretation in
topological graph theory: it coincides with the maximum
orientable genus $\gamma_M(G)$ of $G$. The connection between
the decycling number and the maximum genus of a cubic graph was
revealed in 1997 by Huang and Liu \cite{HuangLiu} through the
formula $\phi(G)+\gamma_M(G)=n/2+1$ and was further developed
by Li and Liu \cite{liu_li}. Their results have a number of
important consequences. In particular, they imply that the
decycling number of a connected cubic graph reaches its natural
lower bound $\lceil(n+2)/4\rceil$ precisely when $G$ is
\textit{upper-embeddable}, that is, when $G$ has a $2$-cell
embedding into an orientable surface with at most two faces.
The following result follows from results of Huang and Liu
\cite{HuangLiu} and Long and Ren \cite{Long_Ren}. An
independent proof can be found in Section~3
(Theorem~\ref{thm:longren-2}). 

\begin{theorem}{\rm (Huang and Liu \cite{HuangLiu}, Long and Ren \cite{Long_Ren})}\label{thm:longren}
If $G$ is a connected cubic graph of order $n$, then
$\phi(G)\geq \lceil (n+2)/4\rceil$, and the equality holds if
and only if $G$ is upper-embeddable.
\end{theorem}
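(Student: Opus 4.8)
The plan is to recast the decycling number entirely in terms of the maximum genus and then read off both the bound and the extremal case from a single Euler-characteristic inequality. First I would record the arithmetic of cubic graphs: since $G$ is cubic of order $n$, it has $m = 3n/2$ edges, so its cycle rank is $\beta(G) = m - n + 1 = n/2 + 1$. Combining Speckenmayer's identity $\phi(G) = n/2 + 1 - z(G)$ with the coincidence $z(G) = \gamma_M(G)$ then gives the compact formula $\phi(G) = \beta(G) - \gamma_M(G)$, valid for every connected cubic graph. This reduces the theorem to a statement purely about $\gamma_M(G)$.

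Next I would invoke the universal upper bound on the maximum genus. Any $2$-cell embedding of a connected graph has at least one face, so Euler's formula $V - E + F = 2 - 2\gamma$ together with $F \geq 1$ forces $2\gamma_M(G) \leq E - V + 1 = \beta(G)$, i.e. $\gamma_M(G) \leq \lfloor \beta(G)/2 \rfloor$. Substituting into the formula above yields
\[
\phi(G) = \beta(G) - \gamma_M(G) \geq \beta(G) - \lfloor \beta(G)/2 \rfloor = \lceil \beta(G)/2 \rceil = \lceil (n+2)/4 \rceil,
\]
which is the claimed bound. For the equality statement I would simply track when the genus inequality is tight: equality $\phi(G) = \lceil (n+2)/4 \rceil$ holds exactly when $\gamma_M(G) = \lfloor \beta(G)/2 \rfloor$, and this is precisely the definition of upper-embeddability. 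Indeed, attaining the Euler bound means $G$ admits a $2$-cell embedding with $F = 1 + \beta(G) - 2\gamma_M(G)$ faces equal to $1$ when $\beta(G)$ is even and $2$ when $\beta(G)$ is odd, in either case at most two, matching the definition in the introduction. This closes the biconditional.

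The genuinely non-trivial step, and the one I expect to be the main obstacle, is the identity $z(G) = \gamma_M(G)$ underpinning the opening reduction; everything afterward is Euler-formula bookkeeping and parity arithmetic. For a self-contained argument that avoids quoting this coincidence, I would instead prove the lower bound directly by counting: for any decycling set $S$, writing $c$ for the number of components of the forest $G - S$ and $e(S)$ for the number of edges inside $S$, a degree count against $m = 3n/2$ gives $|S| = n/4 + (e(S) + c)/2$, and since $e(S) \geq 0$ and $c \geq 1$ this already yields $|S| \geq \lceil (n+2)/4 \rceil$. Analysing the two ways the minimum value $e(S) + c \in \{1, 2\}$ can be realised then recovers the Payan--Sakarovitch structure, and matching this quantity to the Betti deficiency $\xi(G) = \beta(G) - 2\gamma_M(G)$ via Xuong's spanning-tree characterisation of $\gamma_M$ is what would make the equality case independent of the cited formula.
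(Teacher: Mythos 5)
Your proposal is correct, but its primary route is genuinely different from the paper's own proof. You derive everything from the identity $\phi(G)=\beta(G)-\gamma_M(G)$, obtained by combining Speckenmayer's formula $\phi(G)=n/2+1-z(G)$ with the Huang--Liu coincidence $z(G)=\gamma_M(G)$; the bound and the equality case then both fall out of the Euler estimate $\gamma_M(G)\le\lfloor\beta(G)/2\rfloor$ and the definition of upper-embeddability. This is clean and short, but it is precisely the derivation from the cited results of Huang--Liu and Long--Ren that the paper's Section~3 proof (Theorem~\ref{thm:longren-2}) is explicitly designed to avoid: the paper instead proves the lower bound by a direct degree count on $\delta_G(A)$ for a decycling partition $\{A,J\}$, shows that a minimum decycling set attaining $\lceil(n+2)/4\rceil$ forces the partition to be \emph{stable} (tree plus independent set, tree plus near-independent set, or two trees plus independent set, according to $n\bmod 4$ and $e_J$), and then invokes its own equivalence between stable partitions and upper-embeddability (Corollary~\ref{cor:stable}, built from Theorems~\ref{thm:1-face} and~\ref{thm:2-face} via the Jungerman--Xuong theorem and Kotzig's lemma). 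Notably, your closing ``self-contained'' paragraph converges onto exactly this argument: your exact identity $|S|=n/4+(e(S)+c)/2$ is a sharper restatement of the paper's inequality~\eqref{eq:J}, and your case analysis of $e(S)+c\in\{1,2\}$ is the paper's equality-case counting. What you leave as a gesture, however, is the crucial bridge in both directions --- constructing a decycling set of size $\lceil(n+2)/4\rceil$ from a Xuong tree (via Kotzig's pairing of the even cotree components, needed for ``upper-embeddable $\Rightarrow$ equality'') and conversely building a Xuong tree from a stable partition --- and this bridge is the actual content of Theorems~\ref{thm:1-face} and~\ref{thm:2-face}, not mere bookkeeping. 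In sum, your main proof is valid modulo quoting the deep cited identities and buys brevity at the cost of independence, while the paper's proof pays for two structural theorems but obtains the stable-partition machinery that the rest of the paper needs anyway.
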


In 1975, Payan and Sakarovitch \cite{PS} proved that the
equality $\phi(G)=\lceil (n+2)/4\rceil$ holds for all
cyclically $4$-edge-connected graphs. In addition, they
determined the structure of the corresponding decycling sets.
They showed that every minimum decycling set $J$ of such a
graph is either independent or \textit{near-independent}
(meaning that it induces a subgraph with exactly one
edge), and its complement $A=V(G)-J$ induces a forest with at
most two components. Following the terminology established by
Payan and Sakarovitch we say that the partition $\{A,J\}$ of
$V(G)$ is a \textit{stable decycling partition} of $G$. If,
moreover, $A$ induces a tree, we say that $\{A,J\}$ is
\textit{coherent}, otherwise it is \textit{incoherent}.

Figure~\ref{examples} shows three simple examples of stable
decycling partitions. The vertices are coloured black or white,
with black vertices representing $A$ and white vertices
representing the minimum decycling set~$J$; the edges within
$A$ are solid, those within $J$ are dashed, and the edges
between the sets are dotted. Out of the two
partitions of the cube, one is coherent but the other is not.

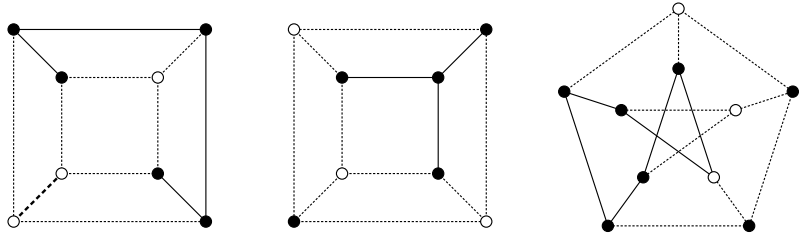
\begin{figure}
\centering\resizebox{0.8\textwidth}{!}{
\begin{tikzpicture}[every node/.style={inner sep=0,outer sep=0}, line cap=rect]  {

\tikzmath{\p=0.8; \q=0;\x=0.6;\w=0.6;\z=3.5;}
\coordinate (a11) at (\p,\q);
\coordinate (a12) at (\p+2*\w,\q);
\coordinate (a13) at (\p+2*\w,\q+2*\w);
\coordinate (a14) at (\p,\q+2*\w);
\coordinate (b11) at (\p-\x,\q-\x);
\coordinate (b12) at (\p+2*\w+\x,\q-\x);
\coordinate (b13) at (\p+2*\w+\x,\q+2*\w+\x);
\coordinate (b14) at (\p-\x,\q+2*\w+\x);

\draw[densely dotted] (a11)--(a12) ;
\draw[densely dotted] (a12)--(a13);
\draw[densely dotted] (a13)--(a14) ;
\draw[densely dotted] (a14)--(a11);

\draw [densely dotted] (b11)--(b12)node [midway, below, yshift=-12]{}; 
\draw (b12)--(b13);
\draw (b13)--(b14);
\draw [densely dotted] (b14)--(b11);

\draw[dotted, line width=0.3mm] (a11)--(b11);
\draw (a12)--(b12);
\draw[densely dotted] (a13)--(b13);
\draw (a14)--(b14);

\filldraw [fill=white] (a11) circle (2pt);
\filldraw  (a12) circle (2pt);
\filldraw [fill=white] (a13) circle (2pt);
\filldraw  (a14) circle (2pt);

\filldraw  [fill=white](b11) circle (2pt);
\filldraw  (b12) circle (2pt);
\filldraw  (b13) circle (2pt);
\filldraw  (b14) circle (2pt);


\coordinate (a11) at (\z+\p,\q);
\coordinate (a12) at (\z+\p+2*\w,\q);
\coordinate (a13) at (\z+\p+2*\w,\q+2*\w);
\coordinate (a14) at (\z+\p,\q+2*\w);
\coordinate (b11) at (\z+\p-\x,\q-\x);
\coordinate (b12) at (\z+\p+2*\w+\x,\q-\x);
\coordinate (b13) at (\z+\p+2*\w+\x,\q+2*\w+\x);
\coordinate (b14) at (\z+\p-\x,\q+2*\w+\x);

\draw[densely dotted] (a11)--(a12);
\draw (a12)--(a13);
\draw (a13)--(a14);
\draw [densely dotted](a14)--(a11);

\draw  [densely dotted](b11)--(b12)node [midway, below, yshift=-12]{}; 
\draw [densely dotted](b12)--(b13);
\draw [densely dotted](b13)--(b14);
\draw  [densely dotted](b14)--(b11);

\draw[densely dotted] (a11)--(b11);
\draw [densely dotted](a12)--(b12);
\draw (a13)--(b13);
\draw [densely dotted](a14)--(b14);

\filldraw [fill=white] (a11) circle (2pt);
\filldraw  (a12) circle (2pt);
\filldraw  (a13) circle (2pt);
\filldraw  (a14) circle (2pt);

\filldraw  (b11) circle (2pt);
\filldraw  [fill=white](b12) circle (2pt);
\filldraw  (b13) circle (2pt);
\filldraw  [fill=white](b14) circle (2pt);


\tikzmath{\p=0.8; \q=0.75;\x=0.6;\w=0.6;\z=8.5;}

\node (a10) {};
\path (a10) ++ (\z,\x/5*3+0.2) node (a11)  {};
\path (a11)  ++ (18:\q) node (b11){};
\path (a11)  ++(18+72:\q) node (b12){};
\path (a11)  ++(18+2*72:\q) node (b13){};
\path (a11)  ++(18+3*72:\q) node (b14){};
\path (a11)  ++(18+4*72:\q) node (b15){};
\path (a11)  ++(18:2*\q) node (c11){};
\path (a11)  ++(18+72:2*\q) node (c12){};
\path (a11)  ++(18+2*72:2*\q) node (c13){};
\path (a11)  ++(18+3*72:2*\q) node (c14){};
\path (a11)  ++(18+4*72:2*\q) node (c15){};

\draw [densely dotted](b11) -- (c11);
\draw [densely dotted](b12) -- (c12);
\draw (b13) -- (c13);
\draw (b14) -- (c14);
\draw [densely dotted](b15) -- (c15);

\draw [densely dotted](b11) -- (b13);
\draw (b13) -- (b15);
\draw (b15) -- (b12);
\draw (b12) -- (b14);
\draw [densely dotted](b14) -- (b11);

\draw [densely dotted](c11) -- (c12);
\draw [densely dotted](c12) -- (c13);
\draw (c13) -- (c14);
\draw [densely dotted](c14) -- (c15)node  [midway, below, yshift=-8]{}; 
\draw [densely dotted](c15) -- (c11);

\filldraw [fill=white] (b11) circle (2pt);
\filldraw  (b12) circle (2pt);
\filldraw  (b13) circle (2pt);
\filldraw  (b14) circle (2pt);
\filldraw  [fill=white](b15) circle (2pt);

\filldraw  (c11) circle (2pt);
\filldraw  [fill=white](c12) circle (2pt);
\filldraw  (c13) circle (2pt);
\filldraw  (c14) circle (2pt);
\filldraw  (c15) circle (2pt);

}
\end{tikzpicture}}
\caption{Examples of stable decycling partitions.}
    \label{examples}
\end{figure}

The theorem of Payan and Sakarovitch can be formulated as
follows.

\begin{theorem}{\rm (Payan and Sakarovitch \cite{PS})}\label{thm:PS}
Every cyclically $4$-edge-connected cubic graph has a stable
decycling partition. More precisely, if $G$ has $n$ vertices,
then the following hold:
\begin{enumerate}[label=\rm(\alph*)]
\item[{\rm(i)}] If $n\equiv2\pmod4$, then $G$ has a
    partition $\{A,J\}$ where $A$ induces a tree and $J$ is
    independent.
\item[{\rm(ii)}] If $n\equiv0\pmod4$, then $G$ has a
    partition $\{A,J\}$ where either
\item[] {\rm 1.} $A$ induces a tree and $J$ is
    near-independent, or
\item[] {\rm 2.} $A$ induces a forest of two trees and
    $J$ is independent.
\end{enumerate}
\end{theorem}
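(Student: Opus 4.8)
The plan is to obtain the whole statement by combining a single edge-counting identity with the equality case of Theorem~\ref{thm:longren}, after which the classification into the listed cases is essentially forced and no separate existence argument is needed: \emph{every} minimum decycling set will turn out to induce a stable partition. Fix a cubic graph $G$ on $n$ vertices, let $J$ be a decycling set, put $A=V(G)\setminus J$ and $j=|J|$. Since $J$ is decycling, $G[A]$ is a forest; write $c$ for its number of components and $e_J$ for the number of edges of $G[J]$.

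First I would record the counting identity. Counting edge-endpoints inside $A$, and using that a forest on $|A|$ vertices with $c$ components has $|A|-c$ edges, shows that the number of edges between $A$ and $J$ equals $|A|+2c$; counting edge-endpoints inside $J$ then gives $2e_J=3j-|A|-2c$. Substituting $|A|=n-j$ yields
\[
2e_J = 4j - n - 2c, \qquad e_J + c = 2j - n/2 .
\]
This holds for any decycling set whose complement induces a forest, and it couples the three quantities $j$, $c$, $e_J$.

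Next I would pin down $j$ for a minimum decycling set. It suffices to prove that a cyclically $4$-edge-connected cubic graph is upper-embeddable, for then Theorem~\ref{thm:longren} gives $\phi(G)=\lceil(n+2)/4\rceil$. Putting $j=\lceil(n+2)/4\rceil$ into the identity makes $e_J+c$ equal to $2\lceil(n+2)/4\rceil-n/2$, which is $1$ when $n\equiv2\pmod4$ and $2$ when $n\equiv0\pmod4$. As $c\ge1$ and $e_J\ge0$, the first case forces $(c,e_J)=(1,0)$, and the second forces $(c,e_J)\in\{(1,1),(2,0)\}$. Reading $c=1$ as ``$A$ induces a tree'', $c=2$ as ``$A$ induces two trees'', $e_J=0$ as ``$J$ is independent'' and $e_J=1$ as ``$J$ is near-independent'' reproduces exactly cases~(i) and~(ii).

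The real content, and the only point where connectivity is used, is thus the upper-embeddability of cyclically $4$-edge-connected cubic graphs. I would prove it through Xuong's theorem: $G$ is upper-embeddable precisely when it admits a spanning tree $T$ for which at most one component of $G-E(T)$ has an odd number of edges; equivalently, through the Betti-deficiency formula of Nebesk\'y. The hard part is to construct such a tree, and here cyclic $4$-edge-connectivity must be used to exclude the obstruction to upper-embeddability---namely a small cyclic edge cut splitting $G$ into two cycle-carrying parts, which in the present language is exactly what would force a forest with three or more components or a complement with two independent edges. I expect this genus-theoretic step to carry all the difficulty; a self-contained alternative would be an exchange argument that starts from a maximum induced forest and repeatedly augments it or reduces its number of components, with cyclic $4$-edge-connectivity guaranteeing that an augmenting move always exists unless the partition already has the required form.
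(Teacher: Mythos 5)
Your proposal is correct, but it takes a genuinely different route from the paper. The paper never argues through minimum decycling sets at all: it proves Theorems~\ref{thm:1-face} and~\ref{thm:2-face} constructively --- starting from a Xuong tree with acyclic cotree, decomposing the cotree into adjacent pairs via Kotzig's lemma, and picking the shared vertices as $J$ --- and then obtains Theorem~\ref{thm:PS} as Corollary~\ref{cor:4cc} by invoking the known fact \cite{PX, KG, nebesky83} that cyclically $4$-edge-connected graphs are upper-embeddable. You instead fix a \emph{minimum} decycling set, verify the identity $2e_J=4j-n-2c$ (your computation checks out, including for multi-edges), and let $j=\lceil(n+2)/4\rceil$ force $(c,e_J)=(1,0)$ when $n\equiv2\pmod4$ and $(c,e_J)\in\{(1,1),(2,0)\}$ when $n\equiv0\pmod4$. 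This buys a strictly stronger conclusion --- \emph{every} minimum decycling set induces a stable partition, which is in fact the form in which Payan and Sakarovitch stated their result --- and is shorter given your two black boxes. Two caveats. First, your appeal to Theorem~\ref{thm:longren} must be to the external literature (Huang--Liu \cite{HuangLiu}, Long--Ren \cite{Long_Ren}): within this paper the equality direction of that theorem is itself proved (Theorem~\ref{thm:longren-2}) \emph{via} the stable partitions of Corollary~\ref{cor:stable}, so a self-contained reading of your argument inside the paper's development would be circular; note also that your counting identity is essentially the same computation as inequality~\eqref{eq:J} and the ``easy counting argument'' in the proof of Theorem~\ref{thm:longren-2}, run in the equality case. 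Second, you leave the upper-embeddability of cyclically $4$-edge-connected cubic graphs as a sketch; that is acceptable provided you cite it (the paper does exactly this), and a short direct verification via Nebesk\'y's criterion (Theorem~\ref{thm:Nebesky}) is available if you want self-containment --- but as written, ``I would prove it through Xuong's theorem'' is a promissory note, not a proof, so the step should be explicitly attributed rather than claimed.
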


It is not difficult to realise that stable decycling partitions
exist even beyond  the class of cyclically $4$-edge-connected
cubic graphs. It is therefore natural to ask which cubic graphs
admit a stable decycling partition. We answer this question in
Theorems~\ref{thm:1-face} and~\ref{thm:2-face} stated in
Section~\ref{sec:partitions} which, put together, yield the
following statement.

\begin{theorem} \label{thm:stable}
A connected cubic graph admits a stable decycling partition if
and only if it is upper-embeddable.
\end{theorem}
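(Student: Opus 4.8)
\section*{Proof proposal for Theorem~\ref{thm:stable}}

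The plan is to route everything through the maximum genus. The identities recorded in the introduction, $\phi(G)=n/2-z(G)+1$ (Speckenmayer) and $\phi(G)+\gamma_M(G)=n/2+1$ (Huang and Liu), together with $z(G)=\gamma_M(G)$, show that a maximum nonseparating independent set has exactly $z(G)=\gamma_M(G)$ vertices; and by Theorem~\ref{thm:longren} combined with $\phi(G)+\gamma_M(G)=n/2+1$, upper-embeddability is the same as $\gamma_M(G)=\lfloor\beta(G)/2\rfloor$, where $\beta(G)=n/2+1$. Writing $n=4m+r$ with $r\in\{0,2\}$ (recall $n$ is even), this largest attainable value of $z(G)$ equals $m+1$ when $r=2$ and $m$ when $r=0$. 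Setting $\xi(G)=\beta(G)-2\gamma_M(G)$ and using $\gamma_M(G)\le\lfloor\beta(G)/2\rfloor$, upper-embeddability becomes $\xi(G)\le 1$; then $\xi(G)=0$ for $r=2$ and $\xi(G)=1$ for $r=0$.

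For the forward implication I extract from a stable partition $\{A,J\}$ a nonseparating independent set of size $\lfloor\beta(G)/2\rfloor$, which forces $z(G)=\lfloor\beta(G)/2\rfloor$ and hence upper-embeddability. Summing degrees over $J$, and using that $J$ induces $e_J\le 1$ edges while $A$ is acyclic with $c\le 2$ components, gives $|J|=n/4+(c+e_J)/2$. Each bound-attaining type is settled by deleting at most one vertex of $J$. If $r=2$, then $A$ is a tree and $J$ is independent, so $J$ is itself nonseparating of size $m+1$ because $G-J=A$ is connected. If $r=0$ and $J$ is near-independent, I delete one endpoint of its unique edge; the remaining $m$ vertices are independent, and removing them leaves the tree $A$ with that endpoint attached, hence connected. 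If $r=0$ and $A$ is a forest of two trees, connectivity of $G$ forces some $j\in J$ to be adjacent to both trees, and deleting $j$ leaves $m$ independent vertices whose complement $A\cup\{j\}$ is connected.

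For the backward implication I run the correspondence in reverse, which is the heart of the argument. Let $S$ be a maximum nonseparating independent set, so $|S|=\gamma_M(G)$ and $G-S$ is connected. A degree count (each vertex of $S$ is cubic, $S$ is independent) yields $\beta(G-S)=\beta(G)-2|S|=\xi(G)$. If $r=2$ then $\xi(G)=0$, so $G-S$ is a tree and $\{G-S,\,S\}$ is already a stable partition. If $r=0$ then $\xi(G)=1$, so $G-S$ is connected with a single cycle $C$; I relocate an arbitrary vertex $w$ of $C$ into the decycling set, taking $J=S\cup\{w\}$ and $A=(G-S)-w$. Since $w$ is cubic and lies on $C$, two of its edges lie on $C$ and at most one leaves it: if that third edge reaches $S$ then $J$ is near-independent and $A$ is a single tree, while if it stays inside $G-S$ then $J$ is independent and $A$ splits into exactly two trees. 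Either way $\{A,J\}$ is a stable partition.

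I expect the main obstacle to be the forward direction when $r=2$. Besides the type above, the count $|J|=n/4+(c+e_J)/2$ a priori admits a near-independent $J$ with a two-tree complement, of size $m+2$, one above the bound. To preserve the equivalence one must show this configuration is never a minimum decycling set, equivalently that it always yields a nonseparating independent set of size $m+1$. The obvious move---pushing an endpoint of the edge of $J$ into $A$ to merge the two trees---works only when that endpoint has neighbours in both trees, and it can genuinely fail when both endpoints send their two remaining edges into a single tree; dislodging this case requires a more global argument and is the one place where an unavoidable case analysis enters. By contrast, the backward construction is clean once $\beta(G-S)=\xi(G)$ is in hand, precisely because the position of the single relocated vertex's third edge controls the number of components of $A$ and the independence of $J$ \emph{simultaneously}.
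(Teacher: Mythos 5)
Your proposal is correct in substance but takes a genuinely different route from the paper's. The paper obtains Theorem~\ref{thm:stable} as Corollary~\ref{cor:stable} of Theorems~\ref{thm:1-face} and~\ref{thm:2-face}, whose proofs work directly with Xuong trees: the Jungerman--Xuong theorem supplies a spanning tree with at most one odd cotree component, Kotzig's lemma (Lemma~\ref{lm:kotzig}) partitions the cotree into pairs of adjacent edges plus at most one singleton, and the shared vertices of the pairs form $J$; conversely, a stable partition is converted back into a spanning tree whose cotree visibly admits such a pairing. You instead route everything through the Speckenmayer/Huang--Liu identity $z(G)=\gamma_M(G)$, using the count $\beta(G-S)=\beta(G)-2|S|$ for an independent set $S$ of a cubic graph. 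Your backward construction (take a maximum nonseparating independent set $S$; if $\xi(G)=1$, relocate one vertex $w$ of the unique cycle of $G-S$, with the position of $w$'s third edge deciding the type of partition) and your forward reductions (delete at most one vertex of $J$ to exhibit a nonseparating independent set of size $\lfloor\beta(G)/2\rfloor$) are valid and pleasantly short; your type-3 argument is in fact the same connectivity observation as Case~2 of the paper's proof of Theorem~\ref{thm:2-face}. What the trade costs: you import Huang--Liu as a black box, whereas the paper is self-contained modulo Jungerman--Xuong, and the paper's proof produces the explicit Xuong trees that later sections reuse (the implication (iii)$\Rightarrow$(ii) of Theorem~\ref{thm:amply2-face} is literally Case~1 of the proof of Theorem~\ref{thm:2-face}).

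Two corrections. First, the ``main obstacle'' you flag in the forward direction is vacuous: in this paper a stable decycling partition is by definition one of the three types listed in Theorem~\ref{thm:PS} (equivalently, $|J|=\lceil(n+2)/4\rceil$; compare ``the three types of stable partitions'' in the proof of Theorem~\ref{thm:longren-2}). The size-$(m+2)$ configuration with a near-independent $J$ and a two-tree complement is therefore not a stable partition and never needs to be dislodged --- indeed, were it admitted, the equivalence would fail, since by Theorem~\ref{thm:longren} a graph with $\phi(G)>\lceil(n+2)/4\rceil$ cannot be upper-embeddable. Your three bound-attaining cases already cover the full forward direction. Second, loops (which the paper permits) break your backward dichotomy verbatim: if the unique cycle of $G-S$ is a loop at $w$, then $w$ does not have ``two edges on $C$,'' its third edge necessarily stays inside $G-S$ (otherwise $G-S$ would be disconnected), and removing $w$ yields a \emph{single} tree while $J=S\cup\{w\}$ is \emph{near-independent}, the loop being the one edge of $G[J]$ --- so one lands in type~2, not in your ``independent $J$, two trees'' case. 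The conclusion survives, but the case description needs repairing; this is exactly why the paper disposes of loops separately at the start of the proof of Theorem~\ref{thm:2-face}.
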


In this context it is important to note that the concepts of
maximum orientable genus and upper-embeddability of graphs have
been extensively studied and are fairly well understood. There
exist results (for example in \cite{jung, KOK, nebesky2,
xuong1}) which provide min-max characterisations of maximum
genus in purely combinatorial terms, and there is a
polynomial-time algorithm to determine the maximum genus of an
arbitrary connected graph \cite{FGM, G}. As a consequence, the
Payan-Sakarovitch theorem readily follows from
Theorem~\ref{thm:stable} (more precisely from the more detailed
Theorems~\ref{thm:1-face} and~\ref{thm:2-face}) and the long
known fact that all cyclically $4$-edge-connected graphs are
upper-embeddable \cite{PX, KG, nebesky83}.

Stable decycling partitions provide a useful insight into the
structure of cubic graphs, which is the reason why they
naturally emerge in several different contexts beyond decycling
of cubic graphs or embedding cubic graphs into orientable
surfaces with high genus. For example, Glover and Marušič
\cite{Glover2007} used Theorem~\ref{thm:PS} to construct
Hamilton cycles or Hamilton paths in cubic Cayley graphs for a
rich family of quotients of the modular group
$\PSL(2,\mathbb{Z})$. Their technique was subsequently used in
a few other papers dealing with the hamiltonicity of cubic
Cayley graphs \cite{GKM09, GKMM12, KUTNAR20095491}. Stable
decycling partitions of cubic graphs play an essential role
also in a recent computer-assisted proof of the Barnette-Goodey
conjecture due to Kardo\v s \cite{kardos_computer}, which
states that simple planar cubic graphs with faces of size at
most 6 are hamiltonian.

Proofs of these results share the following idea of
topological nature. First, a cubic graph $G$ in question is
cellularly embedded into a closed surface. Next, a suitable
tree $T$ in the dual graph $G^*$ is identified. Under certain
conditions, the vertex set of $T$ extends to a stable decycling
partition of a cubic subgraph $H\subseteq G^*$. If this
partition is coherent, then, depending on the parity of the
Betti number of $H$, it is used to construct either a Hamilton
cycle or a Hamilton path of $G$, which traverse the boundary of
the union of faces of $G$ corresponding to the vertices of
$T\subseteq G^*$ (with possible exception of one edge).

Unfortunately, this idea does not work well if the decycling
partition is not coherent. From this point of view, coherent
decycling partitions are more valuable than incoherent ones.
Recall that for cyclically $4$-edge-connected cubic graphs
the theorem of Payan and Sakarovitch guarantees the
existence of a coherent decycling partition only when
$n\equiv2\pmod4$ (that is, when $\beta(G)$ is even). If
$n\equiv0\pmod4$, two possibilities can occur, only one of
which is a coherent decycling partition (namely the one in
Item~(ii)-1 of Theorem \ref{thm:PS}). In this situation one has
to ask under what conditions it is possible to ensure that a
cubic graph admits a coherent decycling partition if its Betti
number is odd (that is, if $n\equiv0\pmod4$). Nedela and
\v{S}koviera \cite{nedela} answered this question in the
positive provided that $G$ is cyclically $5$-edge-connected.
The main result of this paper replaces the assumption of cyclic
$5$-connectivity with cyclic $4$-connectivity, and thus
improves Theorem~\ref{thm:PS} by eliminating the possibility of
an incoherent decycling decomposition stated in Item~(ii)-2.

\begin{theorem}\label{thm:main}
Every cyclically $4$-edge-connected cubic graph $G$ admits a
coherent decycling partition. More precisely, the vertex set of
$G$ has a partition $\{A,J\}$ such that $A$ induces a tree and
$J$ is independent or near-independent, $J$ being independent
if and only if the order of $G$ equals $2\pmod4$.
\end{theorem}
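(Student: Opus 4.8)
The plan is to isolate the only nontrivial residue, $n\equiv 0\pmod 4$, and then upgrade the incoherent outcome of Theorem~\ref{thm:PS} to a coherent one by a local exchange. First I would record the elementary edge counts. Since $G$ is cubic, $\beta(G)=n/2+1$, so $\beta(G)$ is even exactly when $n\equiv 2\pmod 4$. If $\{A,J\}$ is a decycling partition with $J$ independent and $A$ inducing a forest with $c$ components, then comparing $|E(G)|=3n/2$ with the edges inside $A$ and those joining $A$ to $J$ gives $|J|=(n+2c)/4$; if instead $J$ induces exactly one edge, the same count gives $|J|=(n+2c+2)/4$. By Theorem~\ref{thm:longren} a cyclically $4$-edge-connected graph is upper-embeddable, so a minimum decycling set has size $\lceil(n+2)/4\rceil$, and these counts show that the residue of $n$ modulo $4$ already dictates whether $J$ is independent or near-independent. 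For $n\equiv 2\pmod 4$ the statement is immediate, since Theorem~\ref{thm:PS}(i) yields a partition with $A$ a tree and $J$ independent, which is coherent. Hence the whole content of Theorem~\ref{thm:main} lies in the case $n\equiv 0\pmod 4$.

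So assume $n\equiv 0\pmod 4$. Theorem~\ref{thm:PS}(ii) supplies a stable partition $\{A,J\}$ which is either coherent (case~1, and then we are done) or incoherent, with $A$ inducing two trees $T_1,T_2$ and $J$ independent. Because $A$ induces exactly $T_1\cup T_2$, there is no edge of $G$ between $T_1$ and $T_2$; as $G$ is connected, some vertex $v\in J$ must be adjacent to both trees, so up to symmetry $v$ has two neighbours $a_1,a_1'\in T_1$ and one neighbour $a_2\in T_2$. The surgery I would perform is then the following exchange. Move $v$ from $J$ into $A$: the induced subgraph on $A\cup\{v\}$ becomes connected (the edge $va_2$ attaches $T_2$) and unicyclic, its unique cycle $C$ consisting of $v$ together with the $T_1$-path joining $a_1$ and $a_1'$. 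I would next delete from $A\cup\{v\}$ a vertex $w$ of $C$ whose third neighbour in $G$ lies in $J$, so that $w$ has degree $2$ in $A\cup\{v\}$; removing such a $w$ destroys $C$ without disconnecting the graph.

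Setting $A'=(A\setminus\{w\})\cup\{v\}$ and $J'=(J\setminus\{v\})\cup\{w\}$ then produces a partition in which $A'$ induces a single tree, so $J'$ is again a minimum decycling set. The attractive point is that the near-independence of $J'$ need not be verified directly: since $|J'|=|J|=(n+4)/4$ and $A'$ is a tree, the edge count of the first paragraph (the independent case would force $|J'|=(n+2)/4$) leaves no option but that $J'$ induces exactly one edge. Thus $\{A',J'\}$ is coherent, which is precisely the conclusion of Theorem~\ref{thm:main}; a single successful exchange already turns two trees into one, so no iteration is needed.

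The hard part is guaranteeing that an admissible vertex $w$ always exists, and this is exactly where cyclic $4$-edge-connectivity must be spent. If every vertex of the cycle $C$ carried an off-cycle neighbour inside $A\cup\{v\}$ (a pendant branch into $T_1$), then no legitimate $w$ would be available and the exchange would stall; one must show that such a configuration either yields a cyclic edge-cut of size below $4$, contradicting the hypothesis, or can be avoided by re-selecting the bridging vertex $v$. Carrying out this case analysis with only $4$-edge-cuts available, rather than the $5$-edge-cuts exploited in the earlier work of Nedela and \v{S}koviera, is the principal obstacle. It is here that I would lean on the correspondence between coherent decycling partitions and two-face maximum-genus embeddings developed in Section~\ref{sec:partitions} (Theorems~\ref{thm:1-face} and~\ref{thm:2-face}): a coherent partition corresponds to a maximum-genus embedding whose two faces interlock along the single edge induced by $J$, and the genus machinery is what I expect to certify that a cyclically $4$-edge-connected graph always admits such an embedding, thereby closing the gap that the naive exchange leaves open.
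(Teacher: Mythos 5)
Your reduction to the case $n\equiv 0\pmod 4$, the edge count $4|J|=n+2c+2e_J$, and the observation that near-independence of $J'$ follows for free once $A'$ induces a tree are all correct, and the local exchange itself (absorb a bridging vertex $v\in J$ into $A$, then expel a vertex $w$ of the resulting unique cycle $C$ whose third neighbour lies in $J$) is sound as far as it goes. But the proposal contains a genuine gap, which you in fact name yourself: nothing guarantees that $C$ carries an admissible $w$, i.e.\ that some vertex of $C$ has its third edge leaving $A\cup\{v\}$, nor that the bad configuration (every cycle vertex sending its third edge back into $T_1$) can be escaped by re-selecting $v$ or the initial partition from Theorem~\ref{thm:PS}. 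Showing that this configuration forces a cycle-separating edge cut of size at most $3$, or is otherwise avoidable, is exactly where the hypothesis of cyclic $4$-edge-connectivity must be spent, and it is the entire mathematical content of the theorem; with cyclic $5$-edge-connectivity such exchange arguments can be pushed through (this is essentially what Nedela and \v{S}koviera did in \cite{nedela}), and the failure of the naive case analysis at connectivity $4$ is precisely why a different method is needed. Your fallback---that ``the genus machinery is what I expect to certify'' the required embedding---is circular: by Theorem~\ref{thm:amply2-face}, for $n\equiv 0\pmod4$ a coherent decycling partition is \emph{equivalent} to ample two-face-embeddability, so certifying that a cyclically $4$-edge-connected graph admits such an embedding is not a tool available from Section~\ref{sec:partitions}; it is a restatement of what you are trying to prove.

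For comparison, the paper's proof (Theorem~\ref{thm:betterPS}) avoids any surgery on a fixed partition and instead argues by induction on the order, using two external results: Wormald's generation theorem (Theorem~\ref{wormald}), by which every cyclically $4$-edge-connected cubic graph other than $K_4$ and $Q_3$ arises from a smaller cyclically $4$-edge-connected graph $G'$ by an edge extension, and the vertex-prescribed form of the Payan--Sakarovitch theorem (Theorem~\ref{thm:Psvertex}). Since $G'$ has order $n-2\equiv2\pmod4$, it has a coherent partition $\{A',J'\}$ with $J'$ independent, chosen so that the subdivided vertex $x$ of one extended edge lies in $J'$; then $\{A'\cup\{v\},\,J'\cup\{u\}\}$ is verified directly to be coherent, with $xu$ the unique edge inside the new decycling set. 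If you want to rescue your exchange approach you must supply the missing case analysis for the stalled configuration; as written, the argument establishes nothing in the only nontrivial case.
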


What happens when the graph in question is not cyclically
$4$-edge-connec\-ted? First of all, there are many cubic graphs
with cyclic connectivity $3$ (including those with an odd Betti
number) that do admit a coherent decycling partition. In order
to get a deeper insight into the situation in $3$-connected
cubic graphs it is helpful to use the fact that every such
graph $G$ has a ``canonical'' decomposition $\{G_1, G_2,\ldots,
G_r\}$ into cyclically $4$-edge-connected cubic graphs. In
Section~\ref{sec:decomp} we prove that if at most one $G_i$ has
an odd Betti number, then $G$ admits a coherent decycling
partition (Theorem~\ref{thm:decomp1odd}). The family of
3-connected cubic graphs having at most one factor with an odd
Betti number includes the so-called odd-cyclically
$4$-connected graphs. Their characteristic property is that
every edge cut whose removal leaves a component with an odd
Betti number has size at least~$4$. It may be interesting to
mention that the concept of odd cyclic $4$-connectivity comes
from the study of maximum orientable genus where it was
independently introduced by Khomenko and Glukhov \cite{KG} and
Nebesk\'y \cite{nebesky83} in 1980 and 1983, respectively.

Nevertheless, there exist plenty of cubic graphs that do not
admit a coherent decycling partition. Theorem~\ref{thm:stable}
tells us that a necessary condition for a connected cubic graph
to admit a coherent decycling partition is to be
upper-embeddable. This condition is, however, not sufficient.
Examples of upper-embeddable cubic graphs of connectivity $1$
with no coherent decycling partition are not difficult to find:
the example in Figure \ref{c1_counter} can easily be turned
into an infinite family. Two-connected examples are less
apparent. Figure~\ref{fig:c2c_counter} displays two graphs
which are upper-embeddable, but none of them admits a coherent
decycling partition. An obvious generalisation of these two
graphs to an infinite family does not work because it produces
non-upper-embeddable graphs. In spite of that, an infinite
family does exist and is described in Section~\ref{sec:c2c}.

\begin{figure}[h]
\centering
\resizebox{0.4\textwidth}{!}{
\begin{tikzpicture}
[every node/.style={inner sep=0,outer sep=0}, line cap=rect]

  \tikzmath{\a=0.5; \b=1.45; \c=2.2; \d=2.7; \e=2.2; \f=1.9; \g=3.6;\h=3;\i=1.9;} 
  \tikzmath{\u=90; \l=120; \m=60; \n=20; \o=20; \p=60;\q=5; \r=14;} 
  \node (a1) {};
  
  \path (a1) ++(\u: \a) node (c11){};
  \path (a1) ++(\u+\l: \a) node (c12){};
  \path (a1) ++(\u+2*\l: \a) node (c13){};
  
  \path (a1) ++(\u: \b) node (c21){};
  \path (a1) ++(\u+\l: \b) node (c22){};
  \path (a1) ++(\u+2*\l: \b) node (c23){};


  \path (a1) ++(\p+\u+\n: \f) node (e11){};
  \path (a1) ++(\p+\u+\l+\n: \f) node (e12){};
  \path (a1) ++(\p+\u+2*\l+\n: \f) node (e13){};
 
  \path (a1) ++(\p+\u-\n: \f) node (e21){};
  \path (a1) ++(\p+\u+\l-\n: \f) node (e22){};
  \path (a1) ++(\p+\u+2*\l-\n: \f) node (e23){};  

  \path (a1) ++(\u: \g) node (f11){};
  \path (a1) ++(\u+\l: \g) node (f12){};
  \path (a1) ++(\u+2*\l: \g) node (f13){};
  
  
  \path (a1) ++(\p+\u+\n+\o: \i) node (g11){};
  \path (a1) ++(\p+\u+\l+\n+\o: \i) node (g12){};
  \path (a1) ++(\p+\u+2*\l+\n+\o: \i) node (g13){};
 
  \path (a1) ++(\p+\u-\n-\o: \i) node (g21){};
  \path (a1) ++(\p+\u+\l-\n-\o: \i) node (g22){};
  \path (a1) ++(\p+\u+2*\l-\n-\o: \i) node (g23){};  
  
  \path (a1) ++(\u: \h) node (f31){};
  \path (a1) ++(\u+\l: \h) node (f32){};
  \path (a1) ++(\u+2*\l: \h) node (f33){};

  \draw [line width=0.4mm, dashed] (c11) -- (c12);
  \draw [line width=0.4mm] (c12) -- (c13);
  \draw [line width=0.4mm] (c11) -- (c13);  
  \draw [line width=0.4mm] (c11) -- (c21);
  \draw [line width=0.4mm] (c12) -- (c22);
  \draw [line width=0.4mm] (c13) -- (c23);  
   
  \draw [line width=0.4mm] (e13) -- (e21);  
  \draw [line width=0.4mm] (e12) -- (e23);  
  \draw [line width=0.4mm] (e11) -- (e22);  
  
  \draw [line width=0.4mm] (e11) -- (f12);  
  \draw [line width=0.4mm] (e12) -- (f13);  
  \draw [line width=0.4mm] (e13) -- (f11);  
  \draw [line width=0.4mm, dashed] (e21) -- (f11);  
  \draw [line width=0.4mm, dashed] (e22) -- (f12);  
  \draw [line width=0.4mm, dashed] (e23) -- (f13);  
  
  \draw [line width=0.4mm, dashed] (f31) -- (f11);  
  \draw [line width=0.4mm, dashed] (f32) -- (f12);  
  \draw [line width=0.4mm, dashed] (f33) -- (f13);  
  
  \draw [line width=0.4mm] (e11) -- (g11);  
  \draw [line width=0.4mm] (e12) -- (g12);  
  \draw [line width=0.4mm] (e13) -- (g13);      
  \draw [line width=0.4mm, dashed] (e21) -- (g21);  
  \draw [line width=0.4mm, dashed] (e22) -- (g22);  
  \draw [line width=0.4mm, dashed] (e23) -- (g23);  

  \draw [line width=0.4mm] (f31) -- (g13);  
  \draw [line width=0.4mm] (f32) -- (g11);
  \draw [line width=0.4mm] (f33) -- (g12);
  \draw [line width=0.4mm, dashed] (f31) -- (g21);  
  \draw [line width=0.4mm, dashed] (f32) -- (g22);
  \draw [line width=0.4mm, dashed] (f33) -- (g23);
  \draw [line width=0.4mm] (g13) -- (g21);  
  \draw [line width=0.4mm] (g11) -- (g22);
  \draw [line width=0.4mm] (g12) -- (g23);

  \filldraw (c11) circle (2pt);
  \filldraw (c12) circle (2pt);
  \filldraw (c13) circle (2pt);
  
  \filldraw (c21) circle (2pt);
  \filldraw (c22) circle (2pt);
  \filldraw (c23) circle (2pt);
  
  \filldraw (e11) circle (2pt);
  \filldraw (e12) circle (2pt);
  \filldraw (e13) circle (2pt);
 
  \filldraw (e21) circle (2pt);
  \filldraw (e22) circle (2pt);
  \filldraw (e23) circle (2pt);  
    
  \filldraw (f11) circle (2pt);
  \filldraw (f12) circle (2pt);
  \filldraw (f13) circle (2pt);

  \filldraw (g11) circle (2pt);
  \filldraw (g12) circle (2pt);
  \filldraw (g13) circle (2pt);
 
  \filldraw (g21) circle (2pt);
  \filldraw (g22) circle (2pt);
  \filldraw (g23) circle (2pt);  
  
  \filldraw (f31) circle (2pt);
  \filldraw (f32) circle (2pt);
  \filldraw (f33) circle (2pt);

 ----------

\end{tikzpicture}}
\caption{An upper-embeddable graph with no coherent partition.
Dashed lines represent the cotree edges of a Xuong tree
(see Section~\ref{sec:partitions} for the definition).}
    \label{c1_counter}
\end{figure}
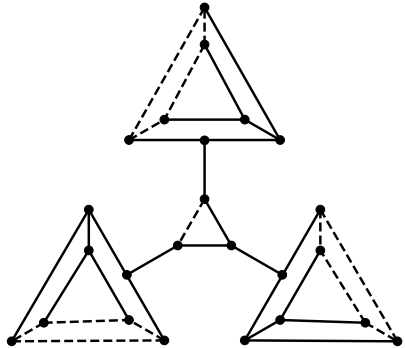

The situation in $3$-connected cubic graphs remains open: we
have no example of a $3$-connected upper-embeddable cubic graph
which would not have a coherent decycling partition. Motivated
by the difficulty of finding examples of $2$-connected
upper-embeddable cubic graphs with no coherent decycling
partition we propose the following conjecture.

\begin{figure}[h]
\centering
\resizebox{0.8\textwidth}{!}{
\input{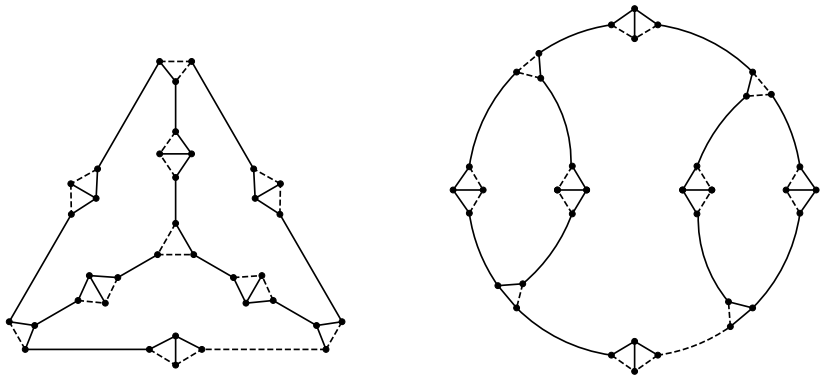}}
\caption{Two-connected upper-embeddable graphs $F_1$ (left) and $F_2$ (right) with no coherent
decycling partition. Dashed lines represent the cotree edges of a Xuong tree.
}
    \label{fig:c2c_counter}
\end{figure}

\begin{conjecture}\label{conj}
{\rm A $3$-connected cubic graph admits a coherent decycling
partition if and only if it is upper-embeddable. }
\end{conjecture}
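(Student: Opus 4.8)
The forward implication is free: a coherent decycling partition is in particular a stable one (its forest part happens to be a single tree), so by Theorem~\ref{thm:stable} any cubic graph admitting a coherent partition is upper-embeddable. The whole content of the conjecture lies in the converse, and the plan is to prove it by induction along the canonical decomposition of a $3$-connected cubic graph into cyclically $4$-edge-connected factors $G_1,\dots,G_r$. Since a $3$-connected cubic graph is also $3$-edge-connected, its only small edge cuts are $3$-edge-cuts; this is exactly what rules out the $1$- and $2$-connected counterexamples of Figures~\ref{c1_counter} and~\ref{fig:c2c_counter}, and it means that the factors are assembled along a tree $\mathcal{T}$ whose edges correspond to nontrivial $3$-edge-cuts. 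Call a factor \emph{odd} if its Betti number is odd. A short computation shows that splitting $G$ along a $3$-edge-cut and capping each side with a new cubic vertex alters each side's Betti number by an even amount, so parities add up: the number $k$ of odd factors satisfies $k\equiv\beta(G)\pmod2$, and a global coherent partition must have $J$ independent when $k$ is even and near-independent (a single edge) when $k$ is odd.

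If $k\le1$ we are done immediately by the decomposition theorem (Theorem~\ref{thm:decomp1odd}), which already covers at most one odd factor, the base case $r=1$ being Theorem~\ref{thm:main}. So the real work is the case $k\ge2$. Here each odd factor, taken on its own, only admits a coherent partition whose $J$-part carries one edge (by Theorem~\ref{thm:main}), yet the assembled graph is allowed at most one such edge in total. The strategy is therefore to cancel near-independent edges in pairs across the $3$-cuts of $\mathcal{T}$: choose a leaf factor that is odd, split $G$ along the corresponding $3$-edge-cut into a piece $G'$ containing that single odd factor and the remainder $G''$, handle $G'$ by Theorem~\ref{thm:decomp1odd} and $G''$ by induction, and then reglue. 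The three cut edges reconnect the two trees $A'$ and $A''$; keeping the global $A$ a single tree requires that exactly one of the three cut edges run between $A'$ and $A''$ while the other two meet $J$, and the reglueing should simultaneously absorb one near-independent edge from each side so that the number of $J$-edges drops by two. Iterating leaves only $k\bmod 2$ near-independent edges, which is precisely what a coherent partition of $G$ permits.

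Two points must be secured for this induction to run. First, upper-embeddability has to be inherited by the remainder $G''$ obtained by deleting a leaf factor and capping: this should follow from a subadditivity estimate for Xuong's Betti deficiency $\xi$ under amalgamation along a $3$-edge-cut, of the form $\xi(G')+\xi(G'')\le\xi(G)+c$ with $c$ small enough that $\xi(G)\le1$ forces $\xi(G'')\le1$. Such estimates are standard in maximum-genus theory, and I expect them to be available from, or adaptable from, the min-max characterisations cited in the introduction \cite{jung, nebesky2, xuong1}. Second, and this is where the difficulty concentrates, the local reglueing must be realisable at \emph{every} $3$-cut. Theorem~\ref{thm:main} only asserts the existence of a coherent partition on each side; to glue them while controlling how the three cut edges meet $A$ and $J$ one needs a strengthened version producing, on each capped side, a coherent partition with a prescribed incidence pattern at the three edges around the cap vertex (for instance, with the cap vertex in $J$ and its near-independent edge positioned on a chosen cut edge).

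The main obstacle is exactly this prescribed-boundary strengthening of Theorem~\ref{thm:main}. The existence proof gives no control over the behaviour of the partition at a designated vertex, and it is conceivable that for some cyclically $4$-edge-connected factor every coherent partition forces an unfavourable pattern at the cap, in which case the two sides cannot be reconciled without either creating a cycle in $A$ or splitting $A$ into two trees. Resolving this will require understanding how much freedom one has in redistributing the single near-independent edge of an odd factor, most naturally through the maximum-genus picture: that edge corresponds to the lone odd component in a Xuong cotree, and the question becomes whether the odd component can always be steered to straddle a prescribed $3$-cut. I regard proving that this steering is always possible, and that the two steered partitions then fit together across the cut, as the crux on which the whole conjecture turns, and as the reason it is stated as a conjecture rather than a theorem: the lower-connectivity counterexamples show that without the room afforded by a full $3$-edge-cut the steering genuinely fails.
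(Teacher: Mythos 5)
You have not proved this statement, and neither does the paper: it is stated as Conjecture~\ref{conj} precisely because the converse direction is open. To your credit, you diagnose this yourself, but let me be precise about where your sketch stops being a proof. The parts that are solid are exactly the parts the paper already establishes: the forward implication is immediate from Theorem~\ref{thm:stable} (a coherent partition is stable), and the case of at most one cyclically odd factor in the canonical decomposition is Theorem~\ref{thm:decomp1odd}, with the base case being Theorem~\ref{thm:main}. Your parity bookkeeping is also correct, since $\beta(H*K)=\beta(H)+\beta(K)-2$ makes the number $k$ of odd factors congruent to $\beta(G)$ modulo $2$. Everything beyond that is a programme, not an argument, and both of its load-bearing steps are unproven. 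The second one you name: a boundary-prescribed strengthening of Theorem~\ref{thm:main} that lets you steer the surplus edge (equivalently, the lone odd Xuong cotree component, via Theorem~\ref{thm:amply2-face}) onto a chosen $3$-cut of each capped factor. The paper's only steering result is Theorem~\ref{thm:Psvertex}, which prescribes one vertex into $J$ and controls nothing about the surplus edge; nothing in the paper, nor in the cited maximum-genus literature, gives the cancellation of two surplus edges across a cut that your induction requires. Indeed, the Claim inside the proof of Theorem~\ref{thm:decomp1odd} works only because the attached factor $K$ is cyclically even, so at most one surplus edge is ever in play; your $k\ge 2$ case is exactly the regime that argument cannot reach.

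The first step deserves more suspicion than you give it. You assert a subadditivity estimate $\xi(G')+\xi(G'')\le\xi(G)+c$ for splitting along a $3$-edge-cut and call it standard, but deficiency is additive over bridges, not over $3$-sums, and no such estimate appears in the paper or follows readily from Theorem~\ref{thm:Nebesky}. The hypothesis of upper-embeddability is genuinely global here: for $k\ge 2$ a $3$-connected cubic graph may or may not be upper-embeddable (with enough odd factors hanging off a central one, the set $X$ of cut edges violates inequality~\eqref{eq:Nebesky}), so your induction must propagate $\xi(G)\le 1$ to the capped remainder $G''$, and it is entirely unclear that removing an odd leaf factor cannot increase deficiency. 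Without both steps the induction does not run, and the paper's own evidence is telling: its authors could neither prove the converse nor find a $3$-connected tightly upper-embeddable counterexample, and the lower-connectivity families of Section~\ref{sec:c2c} show that the steering you need can genuinely fail when the available cuts are smaller than $3$-cuts. So your proposal is a reasonable reading of why the conjecture is plausible and where its difficulty sits, but it establishes nothing beyond what Theorems~\ref{thm:stable}, \ref{thm:main}, and~\ref{thm:decomp1odd} already say.
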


There is yet another important aspect of decycling cubic
graphs. It concerns random graphs. As is well known \cite{RW},
a random cubic graph is hamiltonian (and therefore
$2$-connected). Bau et al. \cite{Bau_Worm} used this fact to
prove that a random cubic graph of order $n$ has decycling
number equal to $\lceil (n+2)/4\rceil$ almost surely. When
combined with Theorem~\ref{thm:longren}, this result states
that almost all cubic graphs are upper-embeddable. By contrast,
there is a positive probability that a random cubic graph
contains a triangle, which means that it cannot be cyclically
$4$-edge-connected (see Theorem 9.5 in \cite{Luczak}). Since all
upper-embeddable cubic graphs admit a stable decycling
partition, the following question suggests itself.

\begin{problem}\label{prob:as-amply}
{\rm Is it true that almost all cubic graphs admit a coherent
decycling partition? Equivalently, is it true that almost all
cubic graphs contain an induced tree such that the removal of
its vertices leaves a subgraph with at most one edge?}
\end{problem}

Our paper is organised as follows. In the next section we
collect the basic definitions required for understanding this
paper. In Section~\ref{sec:partitions} we reexamine the
relationship between minimum decycling partitions and maximum
orientable genus in cubic graphs. We prove
Theorem~\ref{thm:stable} about stable decycling partitions and
derive the result of Payan and Sakarovitch
(Theorem~\ref{thm:PS}) as a consequence. In
Section~\ref{sec:ample} we focus on coherent decycling
partitions. We introduce the concept of ample upper
embeddability and prove that a cubic graph with an odd Betti
number admits a coherent decycling partition if and only if it
is amply upper-embeddable (Theorem~\ref{thm:amply2-face}). In
Sections~\ref{sec:c4c} and~\ref{sec:decomp} we prove our main
results about coherent decycling partitions in cyclically
$4$-edge connected and certain cyclically $3$-edge-connected
cubic graphs. In Section~\ref{sec:c2c} we present an infinite
family of $2$-connected cubic graphs in which every stable
decycling partition is incoherent. The paper closes with a few
remarks and problems.


\section{Preliminaries}\label{sec:cyc_connectivity}
\noindent{}Graphs in this paper will be mostly cubic
($3$-valent) or their subgraphs (subcubic). Loops and multiple
edges are permitted. Note that a $3$-connected cubic graph is
simple, which means that it has neither parallel edges nor
loops. We use the standard notation $V=V(G)$ for the vertex set
and $E=E(G)$ for the edge set of $G$. The symbol $n$ is
reserved for the \emph{order} of~$G$, the number of vertices of
$G$.

If a vertex $u$ of a cubic graph is incident with a loop, then
the other edge incident with $u$ is a bridge. In this paper,
both endvertices of the latter edge are regarded as
cutvertices, that is, including~$u$. This useful convention
comes from topological graph theory where graphs are
defined as finite $1$-dimensional CW-complexes and graph
connectivity corresponds to the connectivity of the underlying
topological space.

For a connected graph $G$, let $\beta(G) = |E|-|V|+1$ denote
its \emph{Betti number}, that is, the dimension of the cycle
space of $G$ (the \emph{cycle rank}), or in other words, the
number of edges of a cotree. Recall that a \emph{cotree}
of a spanning tree $T$ is the spanning subgraph $G-E(T)$ of
$G$. A graph whose Betti number is even is said to be
\emph{cyclically even}, otherwise it is \emph{cyclically odd}.
Observe that if $G$ is cubic, then $G$ is cyclically even
whenever $n\equiv2\pmod4$ and $G$ is cyclically odd whenever
$n\equiv0\pmod4$.

For a subset $A\subseteq V$, let $G[A]$ denote the subgraph
of $G$ induced  by $A$. If $A$ is a proper subset of vertices,
then $\delta_G(A)$ will denote the set of edges that join a
vertex in $A$ to a vertex in $V-A$. In other words,
$\delta_G(A)$ is the edge cut that separates $G[A]$
from $G[V-A]$. A $3$-edge-cut
$\delta_G(A)$ where $A$ or $V-A$ consists of a single vertex is
called \emph{trivial}. If $H=G[A]$, we usually write
$\delta_G(H)$ instead of $\delta_G(A)$.

We say that a set $B\subseteq E(G)$ is \emph{cycle-separating}
if $G-B$ is disconnected, and at least two of its components
contain cycles. Note that in a cubic graph $G$ a minimum cycle
separating subset of $E(G)$ is independent (that is a
matching), and that an independent edge cut is always cycle
separating. A connected graph $G$ is \emph{cyclically
$k$-edge-connected} if no set of fewer than $k$ edges is
cycle-separating in $G$. It can be shown that deleting any set
of $k\geq \beta(G)$ edges yields either a disconnected graph or
a graph without cycles. Thus, if $G$ contains a
cycle-separating set of edges, then it contains one with no
more than  $\beta(G)-1$ elements. We therefore define the
\emph{cyclic connectivity} of $G$, or more precisely,
\emph{cyclic edge connectivity}, denoted by $\zeta(G)$, as the
largest integer $k\leq\beta(G)$ for which $G$ is cyclically
$k$-edge-connected. For instance, $K_{3,3}$ is cyclically
$k$-edge-connected for every positive integer $k$, but its
cyclic connectivity equals~$4$. In fact, $\zeta(G)=\beta(G)$ if
and only if $G$ has no cycle-separating edge cut. If $G$ is
cubic, then it happens only for three graphs, namely for
$G\cong K_4$, $G\cong K_{3,3}$, and the dipole $D_2$ (the only
bridgeless cubic graph on two vertices).

Cyclic vertex connectivity can be defined in a similar manner;
for cubic graphs see \cite{MCCUAIG199216, atom}. It can be
shown that in cubic graphs, for $k\le 3$, $k$-connectivity,
$k$-edge-con\-nec\-tiv\-i\-ty, cyclic edge-connectivity, and
cyclic vertex-connectivity are all equal
\cite[Theorem~1.2]{MCCUAIG199216}. In other words, for cubic
graphs cyclic connectivity is a natural extension of the
classical connectivity invariants.

\section{Decycling partitions and maximum genus in cubic graphs}
\label{sec:partitions}
\noindent{}The purpose of this section
is to discuss a link between the existence of a stable
decycling partition of a cubic graph and its maximum orientable
genus. Our main aim is to prove that a connected cubic graph
has a stable decycling partition if and only if it is
upper-embeddable, that is, if it has a cellular embedding on an
orientable surface which has at most two faces. The importance
of this relationship is underscored by the fact that the
required partitions can be efficiently constructed because the
maximum orientable genus of every graph is polynomially
computable \cite{FGM, G}. This fact provides an alternative
proof of the result, proved in \cite{liu_li}, that the
decycling number of a cubic graph can be determined in
polynomial time.

We begin our exposition by reviewing basic facts about maximum
genus. For a deeper account we refer the reader to \cite{MoTh}
and \cite{White} and for a fairly recent survey to
\cite[Chapter~3]{topics}.

Throughout this paper, all embeddings will be cellular and
surfaces will be closed and orientable. Recall that a
\textit{cellular embedding} of a connected graph $G$ on a
closed surface $S$ is, roughly speaking, a drawing of $G$ on
$S$ without edge-crossings such that each component of $S-G$, a
\textit{face}, is homeomorphic to a disk (a $2$-cell). If a
graph with $n$ vertices and $m$ edges has a cellular embedding
on an orientable surface of genus $g$, then the number of
faces, denoted by $r$, must satisfy the Euler-Poincar\'e
equation
$$n-m+r=2-2g.$$

The \emph{maximum genus} of a connected graph $G$, denoted by
$\gamma_M(G)$, is the largest genus of an orientable surface
into which $G$ has a cellular embedding. It is an immediate
consequence of the Euler-Poincar\'e formula that
$\gamma_M(G)\le\lfloor\beta(G)/2\rfloor$, where $\beta(G)$ is
the Betti number of $G$. The quantity
$\xi(G)=\beta(G)-2\gamma_M(G)$ measures the distance of
$\gamma_M(G)$ from the natural upper bound of $\beta(G)/2$ and
is called the \emph{deficiency} of~$G$. It is easy to see that
$\xi(G)=r_\mathrm{min}(G)-1$ where $r_\mathrm{min}(G)$ is the
minimum number of faces among all cellular embedings of $G$.
The graphs for which $\gamma_M(G) = \lfloor\beta(G)/2\rfloor$
are called \emph{upper-embeddable}. Clearly, a graph is
upper-embeddable if and only if $\xi(G)\le 1$, or equivalently,
if it has a cellular embedding into an orientable surface with
one or two faces. In this context it may be worth mentioning
that every connected graph admits a cellular embedding with a
single face into some closed surface, possibly non-orientable
\cite{R, S}.

Upper-embeddable graphs thus fall into two types:
\emph{orientably one-face embeddable} graphs (whose Betti
number is even) and \emph{orientably two-face embeddable}
graphs (whose Betti number is odd); for brevity the adverb
``orientably'' will be omitted. In cubic graphs these two types
of upper-embeddability can easily be distinguished by the
number of vertices. A one-face embeddable cubic graph has
order $2\pmod4$ whereas a two-face embeddable cubic graph
has order $0\pmod4$.

One of the most remarkable facts about maximum genus is that
this topological invariant can be characterised in a purely
combinatorial manner. As early as in 1973, Khomenko et al.
\cite{KOK} proved that the maximum genus of an arbitrary
connected graph $G$ equals the maximum number of pairs of
adjacent edges whose removal leaves a connected spanning
subgraph of $G$. A similar characterisation was later
established by Xuong \cite{xuong1}. It states that $\xi(G)$
equals the minimum number of components with an odd number of
edges in a cotree of $G$. For brevity, such components will be
called \emph{odd}. Analogously, those with an even number of
edges will be called \emph{even}.

A spanning tree of $G$ whose cotree has exactly $\xi(G)$ odd
components is called a \emph{Xuong tree} of $G$. By a result of
Kotzig \cite[Theorem 4]{kotzig:1957}, every connected graph
with an even number of edges can be decomposed into pairs of
adjacent edges. Since a component with an odd number of edges
can be decomposed into pairs of adjacent edges and one
singleton, the cotree of a Xuong tree can be partitioned into
$\gamma_M(G)$ pairs of adjacent edges and $\xi(G)$ singletons.

For the sake of completeness we present a proof of Kotzig's
result.

\begin{lemma}\label{lm:kotzig}
{\rm (Kotzig \cite{kotzig:1957})}
The edge set of every connected graph with an even number of
edges can be partitioned into pairs of adjacent edges.
\end{lemma}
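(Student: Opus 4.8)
The plan is to recast the pairing problem as a parity condition tied to a spanning tree and then resolve it by a single bottom-up sweep. Observe first that partitioning $E(G)$ into pairs of adjacent edges is the same as choosing, for each edge, one of its endpoints to be its \emph{owner} in such a way that every vertex owns an even number of edges: given such a choice, the edges owned by a fixed vertex all meet at that vertex and can therefore be grouped arbitrarily into adjacent pairs, and conversely each adjacent pair singles out a common vertex that can serve as owner. (A loop is simply assigned to its unique endpoint and counts as one owned edge, so loops and parallel edges cause no difficulty.) Thus it suffices to produce an owner-assignment under which every vertex owns an even number of edges.

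To build such an assignment I would fix a spanning tree $T$ of $G$, which exists because $G$ is connected, and root it at a vertex $r$. First assign every non-tree edge, including every loop, to one of its endpoints chosen arbitrarily. Then process the non-root vertices in order of non-increasing depth in $T$. The key invariant is that when a vertex $v$ is reached, every edge incident to $v$ has already been assigned except the single edge $e_v$ joining $v$ to its parent: the non-tree edges at $v$ were assigned at the outset, while each tree edge from $v$ to a child was assigned when that deeper child was processed. I would then assign $e_v$ so as to correct the parity at $v$ — to $v$ itself if $v$ currently owns an odd number of edges, and to its parent otherwise — after which $v$ owns an even number of edges and $e_v$ is never reconsidered.

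It remains to verify the root. Since the owners partition $E(G)$, the total number of owned edges equals $|E(G)|$, which is even by hypothesis; as every non-root vertex owns an even number of edges, the count at $r$ must be even as well. Hence all vertices own an even number of edges, and grouping each vertex's owned edges into adjacent pairs yields the desired partition of $E(G)$.

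The only points that require care are the two pieces of parity bookkeeping: checking the processing invariant that exactly one edge incident to $v$, namely $e_v$, is still unassigned at the moment $v$ is handled — this is precisely where processing by decreasing depth is essential — and the concluding step that forces the root's parity from the evenness of $|E(G)|$ together with the evenness of all the other counts. Everything else is routine, and, as noted, the argument is insensitive to loops and multiple edges.
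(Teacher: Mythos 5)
Your proof is correct, and it takes a genuinely different route from the paper's. Both arguments rest on the same reformulation: a partition into adjacent pairs is equivalent to an assignment of each edge to one of its endpoints (equivalently, an orientation, with the owner as head) in which every vertex receives an even number of edges. The paper establishes this by an iterative correction process: start from an arbitrary orientation, note that the number of vertices of odd in-degree is even because $|E(G)|$ is even, and repeatedly reverse a maximal directed walk from one odd vertex to another, fixing two parities at a time. Your construction is instead a single bottom-up sweep over a rooted spanning tree: non-tree edges (and loops) are assigned arbitrarily, each tree edge is then used exactly once to repair the parity of its lower endpoint, and the root's parity comes for free from the global count, since the ownership map partitions $E(G)$ and all other vertices own evenly many edges. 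Your invariant is sound -- when $v$ is processed, every child edge was settled earlier (children have greater depth) and no edge incident to $v$ other than $e_v$ is touched afterwards -- and your handling of loops (counted once at their unique endpoint, pairable with any edge at that vertex) is careful where the paper is silent. What each approach buys: the paper's argument is shorter and needs no rooted-tree bookkeeping, but its termination rests on the claim that a maximal directed walk from an odd-in-degree vertex ends at another such vertex, a step stated as ``clearly'' yet requiring some justification; your sweep terminates trivially, makes the parity accounting fully explicit, and immediately yields a linear-time algorithm for producing the pairing.
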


\begin{proof}
Let $G$ be a connected graph with an even number of edges. It
suffices to prove that $G$ can be oriented in such a way that
each vertex has an even in-degree. Indeed, for each vertex of
$G$ we simply distribute the incoming edges into disjoint
pairs, thereby producing a partition of the entire
edge set into pairs of adjacent edges.

Let us start with an arbitrary orientation $D$ of $G$. If $D$
has the required property, we are done. Otherwise, there are at
least two vertices with an odd in-degree, since $|E(G)|$ is
even. At one of them, say $v$, pick an edge directed outward
and continue to produce a directed walk $W$ starting at $v$
which extends as long as possible. Clearly, $W$ terminates at
another vertex with an odd in-degree, say $w$. After reversing
the orientation of all edges on $W$ the in-degree of $v$ and
$w$ becomes even while the other vertices of $G$ keep the
parity of their in-degree. We continue this process until all
vertices of $G$ obtain an even in-degree.
\end{proof}

We end this brief introduction to the theory of maximum genus
of a graph by stating two, in a sense complementary,
combinatorial characterisations of upper-embeddable graphs. The
first of them is due to Jungerman \cite{jung} and Xuong
\cite{xuong1}.

\begin{theorem}\label{thm:Xuong} {\rm (Jungerman \cite{jung}, Xuong \cite{xuong1})}
A connected graph is one-face embeddable if and only if it has
a spanning tree with no odd cotree components; it is two-face
embeddable if and only if it has a spanning tree whose cotree
has precisely one odd component.
\end{theorem}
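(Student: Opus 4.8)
The plan is to derive both equivalences directly from the two facts already recorded in this section: the identity $\xi(G)=r_{\mathrm{min}}(G)-1$ relating the deficiency to the minimum number of faces, and Xuong's formula that $\xi(G)$ equals the minimum, over all spanning trees of $G$, of the number of odd cotree components. A preliminary observation that makes the bookkeeping clean is that in any cotree the number of odd components has the same parity as the total number of cotree edges, namely $\beta(G)$; hence for every spanning tree the number of odd cotree components is congruent to $\beta(G)$ modulo $2$, which is also consistent with $\xi(G)=\beta(G)-2\gamma_M(G)$.

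First I would treat one-face embeddability. By definition $G$ is one-face embeddable exactly when it admits a cellular embedding with a single face, that is, when $r_{\mathrm{min}}(G)=1$; by the identity this is equivalent to $\xi(G)=0$, and by Xuong's formula $\xi(G)=0$ holds if and only if some spanning tree has a cotree with no odd component. The stipulated parity ($\beta(G)$ even) is then automatic, since $\xi(G)=0$ gives $\beta(G)=2\gamma_M(G)$.

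Next I would treat two-face embeddability in the same way: $G$ is two-face embeddable exactly when $r_{\mathrm{min}}(G)=2$, equivalently $\xi(G)=1$, equivalently (by Xuong's formula) when the minimum number of odd cotree components equals~$1$. It remains only to match this with the phrasing ``a spanning tree whose cotree has precisely one odd component''. If a Xuong tree realises the minimum~$1$, it is such a tree; conversely, if some spanning tree has a cotree with exactly one odd component, then the preliminary parity observation forces $\beta(G)$ to be odd, so the minimum cannot be $0$, and it is therefore exactly~$1$, giving $\xi(G)=1$ and $r_{\mathrm{min}}(G)=2$.

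The only genuine content lies in Xuong's formula itself, which I would invoke as already stated rather than reprove. Were a self-contained argument desired, the hard direction is the constructive one: from a spanning tree whose cotree has $k$ odd components, build a cellular embedding with $k+1$ faces by attaching the cotree edges to a one-face embedding of the tree in adjacent pairs (one pair at a time from each even component, and from each odd component after setting aside a single edge, as in the decomposition recorded above), each pair being absorbed by the standard handle-addition surgery that leaves the face count unchanged, with the $k$ leftover singletons contributing the extra faces. The matching lower bound---that no embedding uses fewer faces---is the step I would expect to be the main obstacle, since it requires controlling how the face count of an arbitrary embedding relates to the parity structure of the cotree.
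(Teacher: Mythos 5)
Your derivation is correct, and it matches the paper's treatment: the paper states this theorem on citation (Jungerman, Xuong) immediately after recording exactly the two facts you use, namely $\xi(G)=r_{\mathrm{min}}(G)-1$ and Xuong's formula that $\xi(G)$ is the minimum number of odd cotree components, so the theorem is indeed just the content-free translation you give, with your parity observation correctly handling the converse in the two-face case. You also rightly flag that the genuine mathematical content lives in Xuong's formula itself, which both you and the paper take as imported rather than reproved.
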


The second characterisation of upper-embeddable graphs is due
to  Nebesk\'y \cite[Theorem 3]{nebesky2}. Let $G$ be a graph
and $X\subseteq E(G)$ an arbitrary set of edges. Let
$\text{ec}(G-X)$  and $\text{oc}(G-X)$ denote the number of
cyclically even and cyclically odd components of $G-X$,
respectively.

\begin{theorem}\label{thm:Nebesky} {\rm (Nebesk\'y \cite{nebesky2})}
A connected graph $G$ is upper-embeddable if and only if for an
arbitrary set $X\subseteq E(G)$ one has
\begin{equation}\label{eq:Nebesky}
\text{\rm ec}(G-X)+2\text{\rm oc}(G-X)-2\leq |X|.
\end{equation}
\end{theorem}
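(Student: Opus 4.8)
The plan is to deduce Nebesk\'y's characterisation from Xuong's theorem (Theorem~\ref{thm:Xuong}), which expresses $\xi(G)$ as the minimum, over all spanning trees, of the number of odd cotree components. Write $c(H)$ for the number of components of a graph $H$ and abbreviate $c=c(G-X)$. A spanning forest of $G-X$ has $|V|-c$ edges, so $\beta(G-X)=\beta(G)-|X|+c-1$; substituting $c=\mathrm{ec}(G-X)+\mathrm{oc}(G-X)$ shows that inequality~(\ref{eq:Nebesky}) is equivalent to
\[
\mathrm{oc}(G-X)-\big(|X|-c+1\big)\le 1 .
\]
I shall prove the sharper statement that for \emph{every} $X$ the left-hand side is at most $\xi(G)$, and that this bound is attained for a suitable $X$. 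Since $G$ is upper-embeddable precisely when $\xi(G)\le 1$, the theorem then follows immediately.

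The universal bound $\mathrm{oc}(G-X)-(|X|-c+1)\le\xi(G)$ yields the necessity direction and rests on a subadditivity of the deficiency under edge deletion. Let $C_1,\dots,C_c$ be the components of $G-X$. First, if $\beta(C_i)$ is odd then $\xi(C_i)=\beta(C_i)-2\gamma_M(C_i)$ is odd, hence positive, so $\mathrm{oc}(G-X)\le\sum_i\xi(C_i)$. To bound $\sum_i\xi(C_i)$, take a Xuong tree $T$ of $G$ and set $F_i=T\cap E(C_i)$; this is a spanning forest of $C_i$ with, say, $t_i$ components, and it extends to a spanning tree $T_i$ of $C_i$ by adding $t_i-1$ edges. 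Deleting from $T$ its $|E(T)\cap X|$ edges lying in $X$ splits $T$ into $1+|E(T)\cap X|$ pieces, so $\sum_i t_i=1+|E(T)\cap X|$. Comparing edge sets, the cotree of $T$ in $G$ is the disjoint union of the cotrees of the $T_i$ in the $C_i$ together with exactly
\[
\textstyle\sum_i(t_i-1)+\big(|X|-|E(T)\cap X|\big)=|X|-c+1
\]
further edges. Adding one edge alters the number of components with an odd number of edges by at most one, so the number of odd cotree components of $T$ is at least $\sum_i\xi(C_i)-(|X|-c+1)$. As $T$ is a Xuong tree this number equals $\xi(G)$, whence $\sum_i\xi(C_i)\le\xi(G)+|X|-c+1$ and therefore $\mathrm{oc}(G-X)-(|X|-c+1)\le\xi(G)$, as required.

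It remains to \emph{realise} the bound, which is the sufficiency direction in contrapositive form: if $G$ is not upper-embeddable, i.e.\ $\xi(G)\ge 2$, one must exhibit an $X$ with $\mathrm{oc}(G-X)-(|X|-c+1)\ge 2$, so that~(\ref{eq:Nebesky}) fails. Equality throughout the chain above forces a well-behaved $X$: every component $C_i$ should be upper-embeddable with odd Betti number, and each of the $|X|-c+1$ extra edges should merge pieces so as to decrease the odd-component count by exactly one. Accordingly I would start from a Xuong tree $T$, whose cotree has $\xi(G)\ge 2$ odd components, and take $X$ to be a minimal set of edges whose deletion separates these odd cotree components into distinct components of $G-X$ while keeping each resulting component upper-embeddable of odd Betti number. \textbf{The main obstacle is exactly this construction:} one must route the separating cut through the Xuong tree so that the deleted edges split off pieces of odd Betti number without spawning superfluous even pieces, and then check that the number of deleted edges is small enough for $\mathrm{oc}(G-X)-(|X|-c+1)$ to reach $\xi(G)$. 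This is the genuinely combinatorial heart of the proof; by contrast the reformulation and the necessity direction are routine.

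Combining the two directions gives the identity
\[
\xi(G)=\max_{X\subseteq E(G)}\big(\mathrm{oc}(G-X)-|X|+c(G-X)-1\big),
\]
and the asserted equivalence is the special case of this formula at the threshold value~$1$.
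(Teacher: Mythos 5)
Your reduction of \eqref{eq:Nebesky} to the inequality $\mathrm{oc}(G-X)-(|X|-c+1)\le 1$ is correct, and your necessity direction is complete and sound: decomposing the cotree of a Xuong tree $T$ into the cotrees of the extensions $T_i$ inside the components $C_i$ plus exactly $\sum_i(t_i-1)+|X|-|E(T)\cap X|=|X|-c+1$ further edges, and observing that each added edge changes the number of odd (edge-parity) components by at most one, correctly yields $\mathrm{oc}(G-X)\le\sum_i\xi(C_i)\le\xi(G)+|X|-c+1$. For context: the paper does not prove this theorem at all — it is quoted from Nebesk\'y \cite{nebesky2} — so there is no in-paper proof to compare with; your derivation of the easy half from the full Xuong min-max formula (which the paper states in the text preceding Theorem~\ref{thm:Xuong}) is a clean self-contained argument.

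The sufficiency direction, however, is a genuine gap, and it is the substantive half of Nebesk\'y's theorem. You need: if $\xi(G)\ge 2$, then some $X$ achieves $\mathrm{oc}(G-X)-(|X|-c+1)\ge 2$, and your sketch — take a minimal edge set separating the odd cotree components of a Xuong tree — does not obviously work. Nothing bounds the number of tree edges that must be cut to separate two odd cotree components: each deleted edge costs $1$ in $|X|$ while raising $c$ by at most $1$, so any deletion that fails to increase the component count, and any even-Betti piece spawned along the way, erodes the gain. Moreover, an odd cotree component lying inside a piece of $G-X$ in no way forces that piece to have odd Betti number, so the condition ``each resulting component upper-embeddable of odd Betti number'' is not something your construction can simply impose. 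Establishing the attainment $\max_X\bigl(\mathrm{ec}(G-X)+2\,\mathrm{oc}(G-X)-|X|-1\bigr)\ge\xi(G)$ is exactly what Nebesk\'y's original (and rather involved, inductive) proof accomplishes; it can alternatively be derived from matroid-parity or Gallai-type duality, but not by a routine refinement of the Xuong-tree count. As it stands you have proved only that upper-embeddability implies \eqref{eq:Nebesky}; the ``if'' part of the theorem, and hence your closing identity, remain unproven.
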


In the rest of this section we show that the two types of
upper-embeddability of cubic graphs lead to three different
types of decycling vertex partitions $\{A,J\}$, two of which
exist when the Betti number is odd, and one exists when the
Betti number is even. We will first treat the  case of even
Betti number, but before stating the result we present a lemma
which provides a convenient tool for dealing with cubic
upper-embeddable graphs in general.

Let $G$ be a connected cubic graph and $T$ a spanning tree of
$G$. Observe that each component of $G-E(T)$ is a path or a
cycle. A component of $G-E(T)$ will be called \emph{heavy} if
the number of its edges is at least three and has the
same parity as the Betti number of $G$. A component which is
not heavy will be called \emph{light}.

\begin{lemma}\label{lm:acyclic}
Let $G$ be a connected loopless cubic graph of order at
least $4$. If $G$ is upper-embeddable, then it has a Xuong tree
with acyclic cotree. If, in addition, $G$ has a Xuong tree with
a heavy cotree component, then it also has a Xuong tree with
acyclic cotree that contains a heavy component of the same
parity.
\end{lemma}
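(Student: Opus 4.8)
The plan is to prove both assertions by a single \emph{edge-swapping} move on a Xuong tree that destroys one cotree cycle at a time while preserving the Xuong property. Throughout I use that, since $G$ is cubic, the cotree $G-E(T)$ has maximum degree at most $2$, so each of its components is a path, a cycle, or an isolated vertex; in particular every vertex lying on a cotree cycle has cotree-degree $2$ and hence tree-degree $1$, i.e.\ it is a leaf of $T$. Suppose $T$ is a Xuong tree whose cotree has a cycle component $C$. The basic move is: pick any $v\in C$, let $f=vu$ be its unique tree edge and $e$ one of the two cotree edges of $C$ at $v$, and pass to $T'=T-f+e$. Since $v$ is a leaf, deleting $f$ isolates $v$ and adding $e$ reattaches it, so $T'$ is again a spanning tree. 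The observation that removes the one degenerate configuration is that $u\notin C$ and $\deg_T(u)\ge 2$: as $v$ is a leaf and $n\ge 4$, its neighbour $u$ cannot also be a leaf (otherwise $\{u,v\}$ with $f$ would be a whole component of $T$), and $u\in C$ is impossible since then both $u$ and $v$ would have $f$ as their only tree edge, again making $\{u,v\}$ a component of $T$. Consequently, in the new cotree $G-E(T')=(G-E(T))-e+f$ the vertex $u$ has cotree-degree at most $2$, the edge $f$ joins the opened path $C-e$ to the \emph{distinct} component of $u$ without closing a cycle, and $C$ is thereby turned into a path while no new cycle is created anywhere.

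Next I would verify that $T'$ is still a Xuong tree with strictly fewer cycle components. Writing $k=|E(C)|$ and letting $\ell$ be the size of the (path) component of $u$ (with $\ell=0$ when $u$ is isolated), the move replaces the contributions of these two components by a single path, so the number of odd cotree components changes from $[k\text{ odd}]+[\ell\text{ odd}]$ to $[(k+\ell)\text{ odd}]$. This is unchanged unless $k$ and $\ell$ are both odd, in which case it drops by $2$. Since $T$ already attains the minimum $\xi(G)$, the count cannot drop, so it is preserved and $T'$ is a Xuong tree. As the move converts $C$ into a path and touches no other cycle, the number of cycle components decreases by exactly one; iterating yields a Xuong tree with acyclic cotree, which proves the first assertion.

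For the second assertion I would run the very same sequence of swaps, but starting from a Xuong tree $T_0$ that already carries a heavy component $H_0$. The decisive point is that the surgery never \emph{shrinks} a component: each move either leaves a component untouched, turns a cycle into a path of the same length, or merges two components into a longer one. Hence the component carrying the edges of $H_0$ keeps size at least $3$ (at least $4$ when $\beta(G)$ is even) at every step. To control parity, split on $\beta(G)$. If $\beta(G)$ is even then $\xi(G)=0$, so every cotree component stays even throughout; the surviving large component is then even of size at least $4$, hence heavy. If $\beta(G)$ is odd then $\xi(G)=1$, so at each step there is exactly one odd component, and a heavy component is precisely an odd component of size at least $3$; since the removed edge $e$ only shrinks a component when that component is the cycle being opened (in which case its size is preserved or increased), and since merging the odd component with its necessarily even neighbours keeps it odd and only enlarges it, the unique odd component remains heavy. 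In both cases the final acyclic Xuong tree contains a heavy component of the same parity as $H_0$.

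The main obstacle, and exactly the place where the hypotheses \emph{loopless} and \emph{order at least $4$} are used, is justifying that the elementary swap is legitimate: that $T'$ is a spanning tree, that adding $f$ creates no new cotree cycle, and above all that the number of odd components is not increased, so the Xuong property survives each step. Once this single move and its parity bookkeeping are established, both parts follow by routine iteration, and no further case analysis on the global structure of $G$ is needed.
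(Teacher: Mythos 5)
Your proposal is correct and takes essentially the same route as the paper's own proof: the identical elementary swap $T'=T-f+e$ that opens one cotree cycle at a time, the same use of the order-$\ge 4$ and looplessness hypotheses to show that the tree-neighbour $u$ of a cycle vertex has cotree-degree at most $1$ (so the swap merges the opened cycle with a distinct acyclic component), and the same appeal to the minimality of $\xi(G)$ to rule out a drop in the number of odd components, which preserves both the Xuong property and the heavy component. Your parity bookkeeping for the second assertion is merely a slightly more explicit version of the paper's case distinction on whether the opened cycle is odd or even.
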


\begin{proof}
Take any Xuong tree $T$ of $G$. If $G-E(T)$ is acyclic, then
there is nothing to prove. If not, $G-E(T)$ has a component $Q$
which is a cycle.  Let $Q=(u_1u_2\ldots u_k)$, where $u_1, u_2,
\ldots, u_k$ are vertices of $G$ listed in cyclic ordering.
Since $G$ is loopless, we have $k\ge 2$. Each $u_i$ is adjacent
to some vertex $v_i$ such that the edge $u_iv_i$ belongs to
$T$. Note, however, that $v_i$ and $v_j$ need not be distinct
for $i\ne j$. Furthemore, the valency of each $v_i$ in $G-E(T)$
is at most $1$ for otherwise $u_iv_i$ would be the only edge of
$T$ and $G$ would have only two vertices. In particular, each
$v_i$ belongs to an acyclic component of $G-E(T)$. Set
$T'=T+u_1u_2-u_1v_1$; clearly, $T'$ is a spanning tree of $G$.
The cycle $Q$ in $G-E(T)$ now transforms into a
$u_2$-$v_1$-path $P$ in $G-E(T')$ of length $k$. If $Q$ is an
odd cycle, then $v_1$ must belong to an even component of
$G-E(T)$, because $T$ is a Xuong tree. Therefore $P$ becomes
part of an acyclic cotree component whose number of edges
is odd and at least $k$. If $Q$ is even, then $P$ becomes part
of an acyclic component of $G-E(T')$ which has the same parity
as the component of $G-E(T)$ containing $v_1$ and has at
least $k$ edges again. In other words, the transformation of
$T$ into $T'$ turns a heavy component of $G-E(T)$ into a heavy
acyclic component of $G-E(T')$. By repeating this process as
many times as necessary we arrive at a Xuong tree $T''$ of $G$
which has an acyclic cotree, and has a heavy cotree component
whenever the cotree of $T$ had.
\end{proof}

The following two theorems are the main results of this
section.

\begin{theorem}\label{thm:1-face}
For a connected cubic graph $G$ the following statements are
equivalent.
\begin{itemize}
\item[{\rm (i)}]$G$ has a cellular embedding on an
    orientable surface with one face.
\item[{\rm (ii)}] The vertex set of $G$ can be partitioned
    into two sets $A$ and $J$ such that $A$ induces a tree
    and $J$ is independent.
\end{itemize}
\end{theorem}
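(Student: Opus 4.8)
The plan is to prove the two implications separately, using Theorem~\ref{thm:Xuong} (Jungerman--Xuong) as the bridge to maximum genus: recall that (i) is equivalent to the existence of a spanning tree of $G$ whose cotree has no odd component. Before starting either direction I would record a counting identity that pins down the relevant sizes. If $\{A,J\}$ is a partition as in (ii), then every vertex of $J$ sends all three of its edges into $A$, so $G[A]$ is a tree with $(n-|J|)-1$ edges while the number of edges leaving $J$ equals $3|J|$; comparing the total with $|E(G)|=3n/2$ forces $2|J|=n/2+1=\beta(G)$, hence $|J|=\beta(G)/2$ and in particular $\beta(G)$ is even, i.e.\ $n\equiv2\pmod4$. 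Read backwards, the same identity shows that to build a partition as in (ii) it is enough to exhibit an independent set $J$ with $|J|=\beta(G)/2$ for which $G-J$ is connected, since connectivity together with the correct edge count automatically makes $G[A]=G-J$ a tree.

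For the implication (ii)$\Rightarrow$(i) I would turn the partition into a spanning tree by hand. Take the tree $G[A]$ and attach each $j\in J$ by one arbitrarily chosen edge $f_j$ joining $j$ to $A$; such an edge exists because $J$ is independent, and the resulting subgraph $T$ is a spanning tree, being connected, spanning, and of size $(|A|-1)+|J|=n-1$. The cotree $G-E(T)$ then consists exactly of the two remaining edges at each vertex of $J$, so it is bipartite between $J$ and $A$ with every vertex of $J$ of degree $2$. Consequently each of its components is a path or cycle alternating between $A$ and $J$; a path cannot terminate at a degree-$2$ vertex of $J$, so both its endpoints lie in $A$ and it has even length, and every cycle is even by bipartiteness. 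No cotree component is odd, and Theorem~\ref{thm:Xuong} delivers a one-face embedding.

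For the harder implication (i)$\Rightarrow$(ii) I would first apply Theorem~\ref{thm:Xuong} to get a spanning tree with only even cotree components, and then invoke Lemma~\ref{lm:acyclic} to replace it by a Xuong tree $T$ with \emph{acyclic} cotree; thus $G-E(T)$ is a disjoint union of paths, each having an even number of edges. Writing such a path as $x_0x_1\cdots x_{2k}$, each internal vertex has cotree-degree $2$ and hence tree-degree $1$, so it is a leaf of $T$. I would let $J$ collect all \emph{odd-indexed} internal vertices $x_1,x_3,\ldots,x_{2k-1}$ over all paths; counting one per adjacent cotree pair gives $|J|=\beta(G)/2$. These vertices are pairwise non-adjacent: a cotree edge joins consecutive, hence oppositely-indexed, vertices and so never joins two of them, while a tree edge joining two of them would join two leaves of $T$ and force $T=K_2$, excluded once $n\ge4$. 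Since every cotree edge meets $J$, the graph $G-J$ carries only tree edges, and as $J$ consists of mutually non-adjacent leaves of $T$, the graph $G-J=T-J$ is again a tree; by the reduction in the first paragraph this is precisely the partition sought.

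The conceptual crux is the observation that the common vertices of consecutive cotree-edge pairs are non-adjacent leaves of $T$, which is exactly what makes $J$ simultaneously independent and non-separating; once Lemma~\ref{lm:acyclic} supplies the acyclic even cotree, the rest is bookkeeping. The points where I would expect to spend real care are the degenerate cases that sit outside this argument. If $G$ carries a loop, its vertex can lie neither in $J$ (it is not independent) nor in $A$ (the loop is a cycle), so (ii) fails; and a loop is an odd one-edge cotree component in every spanning tree, so (i) fails too, making the equivalence hold trivially. I may therefore assume $G$ loopless, as Lemma~\ref{lm:acyclic} demands. The only remaining exception is $D_2$, which must be checked directly (it is one-face embeddable on the torus and admits the partition with $|A|=|J|=1$), since the ``two leaves are non-adjacent'' step requires $n\ge4$; every other one-face embeddable cubic graph has $n\ge6$, and the argument above applies verbatim.
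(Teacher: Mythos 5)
Your proof is correct, and its overall skeleton---translating between partitions and Xuong trees through Theorem~\ref{thm:Xuong} in both directions---matches the paper's; in particular your (ii)$\Rightarrow$(i) construction (attach each $j\in J$ to the tree $G[A]$ by one chosen edge, note every $J$-vertex then carries exactly two cotree edges, conclude all cotree components are even) is essentially verbatim the paper's argument. The genuine difference is in (i)$\Rightarrow$(ii): the paper does not use Lemma~\ref{lm:acyclic} here at all, but instead applies Kotzig's Lemma~\ref{lm:kotzig} to partition the (arbitrary, possibly cyclic) even cotree components into pairs of adjacent edges and takes one shared vertex per pair, whereas you first straighten the cotree into even paths via Lemma~\ref{lm:acyclic} and then select the odd-indexed internal vertices---a concrete instantiation of the same pairing. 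Kotzig's route is slightly slicker: it needs no order hypothesis and absorbs the dipole $D_2$ without a special case (with a single pair there are no distinct pairs whose chosen vertices could be adjacent), while your route inherits the order-at-least-$4$ and looplessness hypotheses of Lemma~\ref{lm:acyclic} and must therefore dispatch loops and $D_2$ by hand, which you identify and do correctly (a loop kills both (i) and (ii); $D_2$ satisfies both). What your version buys in exchange is transparency: the independence of $J$ is read off directly from the path structure (cotree edges join consecutively indexed vertices; a tree edge between two leaves would force $T=K_2$), and your opening counting identity $2|J|=\beta(G)$, forcing $|J|=\beta(G)/2$ and $n\equiv2\pmod 4$, is not needed for the equivalence but is a worthwhile supplement, since it shows that any partition as in (ii) is automatically a minimum decycling partition, consistent with Theorem~\ref{thm:longren-2}.
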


\begin{proof}
(i) $\Rightarrow$ (ii): By the Jungerman-Xuong Theorem, $G$ has
a spanning tree $T$ with all cotree components even.
Lemma~\ref{lm:kotzig} implies that $E(G)-E(T)$ has a partition
$\mathcal{P}$ into pairs of adjacent edges. For each pair
$\{e,f\}$ from $\mathcal{P}$ pick a vertex shared by $e$ and
$f$, and let $J$ be the set of all vertices obtained in this
way. Set $A=V(G)-J$. Since $G$ is cubic, each vertex of $J$
must be a pendant vertex of $T$. Observe that the vertices
chosen from distinct pairs of $\mathcal{P}$ must be
non-adjacent. So $J$ is an independent set of vertices and
$T-J$ is a tree with $V(T-J)=A$. Since every cotree edge with
respect to $T$ is incident with a vertex in $J$, the tree $T$
is an induced subgraph. In other words, $\{A,J\}$ is the
required vertex partition.

(ii) $\Rightarrow$ (i): Assume that the vertex set of $G$ has a
partition $\{A, J\}$ where $A$ induces a tree and $J$ is an
independent set. Let $S$ be the tree induced by $A$. For any
vertex $v\in J$ choose an arbitrary edge $e_v$ incident with
$v$ and form a spanning subgraph $S^+$ of $G$ by adding to $S$ all
the edges $e_v$ with $v\in J$. Since $J$ is independent, each
edge $e_v$ has its other end on $S$. Therefore $S^+$ is a
connected spanning subgraph of $G$. In fact, $S^+$ must be
acyclic, because each vertex $v\in J$ is joined to $S$ only by
the edge $e_v$. Thus $S^+$ is a spanning tree of $G$ in which
all vertices of $J$ are pendant. By the definition of $S^+$,
each cotree edge is incident with a single vertex of $J$, and
each vertex of $J$ is incident with precisely two cotree edges.
Hence the set of cotree edges can be partitioned into pairs of
adjacent edges and, consequently, each cotree component is even.
By Theorem~\ref{thm:Xuong}, $G$ is one-face embeddable.
\end{proof}

If a cubic graph $G$ has a vertex partition $\{A,J\}$ such that
$A$ induces a tree and $J$ is independent, then by
Theorem~\ref{thm:1-face} its Betti number must be even, and
hence its order is $2\pmod4$. The next theorem shows that if
the order of $G$ is $0\pmod4$, then there exists a similar
partition $\{A,J\}$ of the vertices of $G$, however, it is
either the independence of $J$ or the connectivity of the
subgraph induced by $A$ that fails.

\begin{theorem}\label{thm:2-face}
Let $G$ be a connected cubic graph. The following statements
are equivalent.
\begin{itemize}
\item[{\rm (i)}] $G$ has a cellular embedding on an
    orientable surface with two faces.

\item[{\rm (ii)}] The vertex set of $G$ can be partitioned
    into two sets $A$ and $J$ such that either

\item[] {\rm 1.} $A$ induces a tree and $J$ is
    near-independent, or
\item[] {\rm 2.} $A$ induces a forest with two components
    and $J$ is independent.
\end{itemize}
\end{theorem}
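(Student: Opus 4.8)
The plan is to mirror the proof of Theorem~\ref{thm:1-face}, replacing the use of Kotzig's lemma (Lemma~\ref{lm:kotzig}) and the Jungerman--Xuong theorem (Theorem~\ref{thm:Xuong}) by their two-face versions. Throughout I use that, since $G$ is cubic with order $0\pmod4$, its Betti number is odd, so every cotree has an odd number of odd components and in particular at least one; proving either implication thus amounts to exhibiting a configuration with \emph{exactly} one odd cotree component. The bookkeeping device used in both directions is that, for a spanning tree $T$ of a cubic graph, each cotree component that is a path has its internal vertices of tree-degree $1$ (pendant vertices of $T$) and its two endpoints of tree-degree $2$ (hence cut vertices of $T$).

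For (i)$\Rightarrow$(ii) I would start, via Lemma~\ref{lm:acyclic}, from a Xuong tree $T$ whose cotree is acyclic; as $\xi(G)=1$, this cotree consists of one odd path $P_0$ together with even paths. Each even path I split into consecutive pairs of adjacent edges and place the apex of each pair into $J$; as in Theorem~\ref{thm:1-face} these apices are pendant, pairwise non-adjacent, and cover every cotree edge of the even paths. The argument then turns entirely on $P_0$, where a genuine dichotomy appears. If $P_0=(u_0u_1\ldots u_{2k+1})$ is \emph{heavy}, i.e. has at least three edges ($k\ge1$), I place $u_1,u_3,\ldots,u_{2k-1}$ together with $u_{2k}$ into $J$; all of these are internal to $P_0$, hence pendant, they cover every edge of $P_0$, and the only adjacency they create inside $J$ is the single edge $u_{2k-1}u_{2k}$. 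Thus $J$ is near-independent, and since no cotree edge survives inside $A=V(G)-J$ while deleting pendants keeps $T$ connected, $A$ induces a tree, giving alternative~1. If instead $P_0$ is a single edge $u_0u_1$, its end $u_0$ has tree-degree $2$; placing $u_0$ (rather than an internal vertex) into $J$ keeps $J$ independent but makes $u_0$ a cut vertex of $T$, so $T-J$ splits into exactly two trees and $A$ induces a forest with two components, giving alternative~2.

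For (ii)$\Rightarrow$(i) I would construct, in each case, a spanning tree $S^+$ whose cotree decomposes into adjacent pairs plus exactly one singleton; this forces exactly one odd cotree component, so Theorem~\ref{thm:Xuong} yields a two-face embedding. In alternative~1, let $S$ be the tree induced by $A$ and $xy$ the unique edge of $G[J]$; I attach every $v\in J$ to $S$ by a single edge, choosing for $x$ and $y$ an edge into $A$ so that $xy$ remains in the cotree. Each $v\in J\setminus\{x,y\}$ then contributes one adjacent pair, while the three cotree edges incident with $\{x,y\}$ (namely $xy$ and one further edge at each of $x,y$) split into one adjacent pair and one leftover edge. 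In alternative~2, with $A$ inducing trees $S_1,S_2$, I first note that connectivity of $G$ forces some $z\in J$ to have a neighbour in each of $S_1,S_2$: the only links between the two trees pass through $J$, which is independent, so a shortest $S_1$--$S_2$ alternating path has length two. I then attach $z$ to both $S_1$ and $S_2$ and every other $v\in J$ by a single edge, so that $z$ contributes the lone singleton and every other $J$-vertex an adjacent pair.

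The main obstacle, and the feature distinguishing this theorem from its one-face counterpart, is the treatment of the single odd component: I must produce exactly one odd cotree component (not three, five, \dots) and correctly decide which alternative occurs. The delicate points are verifying that $G[J]$ contains \emph{exactly} one edge in the heavy case, which relies on the fact that two pendant vertices of $T$ cannot share a tree edge when $n\ge4$, so that no unintended adjacency arises between chosen vertices of different cotree components; and, in the reverse direction, confirming the bridging vertex $z$ and checking that the ad hoc pairing around $xy$ (respectively around $z$) really leaves a single unpaired edge. Ruling out accidental adjacencies of the chosen cut vertex with pendant apices in alternative~2 similarly requires the modest freedom available in choosing the pairings, but is otherwise routine.
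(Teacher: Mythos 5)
Your proposal follows essentially the same route as the paper's proof: for (i)$\Rightarrow$(ii), start from Lemma~\ref{lm:acyclic} to get a Xuong tree with acyclic cotree, pair the cotree edges à la Kotzig with apices placed into $J$, and let the heavy/light dichotomy of the unique odd path decide between alternative~1 (surplus edge $u_{2k-1}u_{2k}$, matching the paper's $h=w_1w_3$) and alternative~2 ($2$-valent endpoint into $J$, matching the paper's $w_1$); for (ii)$\Rightarrow$(i), build a spanning tree by attaching each $J$-vertex by one edge so that the cotree decomposes into adjacent pairs plus exactly one singleton, with the same bridging-vertex argument (your $z$ is the paper's $x$ with neighbours $y_1\in S_1$, $y_2\in S_2$). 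However, there is one genuine gap: you never treat loops, which the paper's conventions explicitly permit and which its proof handles separately in both directions. In (i)$\Rightarrow$(ii) your starting point fails outright for a cubic graph with a loop: a loop can never lie in a spanning tree, so it is always a cyclic cotree component, no Xuong tree has an acyclic cotree, and Lemma~\ref{lm:acyclic} (stated only for loopless graphs of order at least $4$) cannot be invoked; the unique odd component then need not be a path. The paper instead uses that the edge at the loop vertex is a bridge, that deficiency is additive over bridges, and applies Theorem~\ref{thm:1-face} to the cubic graph homeomorphic to the bridgeless part, appending the loop vertex to $J$. Symmetrically, in (ii)$\Rightarrow$(i) alternative~1 the unique edge of $G[J]$ may be a loop at $x$; then your accounting of ``three cotree edges incident with $\{x,y\}$'' breaks down, since $x$ has a single non-loop edge, which must be the tree edge $t_x$, and the loop itself is the lone singleton -- a subcase the paper treats explicitly. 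This is not cosmetic: the loop case of Case~1 is later cited in the proof of Theorem~\ref{thm:amply2-face}, and Lemma~\ref{lm:loop-tightly} shows that cubic graphs with loops can indeed be two-face-embeddable, so the implication must cover them.

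One smaller imprecision: the ``modest freedom available in choosing the pairings'' that you invoke to rule out an accidental tree-edge adjacency between the $2$-valent vertex $u_0$ and a pendant apex does not actually exist -- the decomposition of an even path into adjacent pairs is forced, apices and all. The correct resolution is different: if $u_0$ is tree-adjacent to an apex $v$, then (as one checks using that $T-J$ loses only one split, since both tree-neighbours of $u_0$ being apices would force $n=3$) the partition is still stable but of type~1 rather than type~2, with surplus edge $vu_0$; alternatively one switches to the other endpoint $u_1$ of the light component, which is exactly why the paper constructs both candidate partitions $\{A_1,J_1\}$ and $\{A_2,J_2\}$ and concludes only that each ``fulfils (ii)'' rather than pinning down its type. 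With the loop reduction added and this point rephrased, your argument coincides with the paper's proof.
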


\begin{proof}
(i) $\Rightarrow$ (ii): First assume that $G$ contains a loop
incident with a vertex $v$, and let $u$ be the vertex adjacent
to $v$. Then $uv$ is a bridge and since deficiency is clearly
additive over bridges, $G-v$ is a one-face embeddable graph.
Let $G'$ be the cubic graph homeomorphic to $G-v$. By
Theorem~\ref{thm:1-face}, the vertex set of $G'$ has a vertex
partition $\{A,J\}$ where $A$ induces a tree and $J$ is
independent. Then $\{A\cup\{u\},J\cup\{v\}\}$ is a partition of
$V(G)$ such that $A$ induces a tree and $J$ is
near-independent.

Now let $G$ be loopless. Obviously, $G$ has at least four
vertices, so we can employ Lemma~\ref{lm:acyclic} to conclude
that $G$ has a Xuong tree $T$ with acyclic cotree. Let $B$ be
the unique odd component of $E(G)-E(T)$. Choose a pendant
edge $g=w_1w_2$ of $B$, where $w_2$ denotes a pendant vertex of
$B$. By applying Lemma~\ref{lm:kotzig} we decompose the set
$E(G)-(E(T)\cup\{g\})$ into pairs of adjacent edges. For each
pair $\{e,f\}$ from $\mathcal{P}$ we pick a vertex shared by
$e$ and $f$ and denote by $J'$ the resulting set of vertices.
Clearly, $J'$ is an independent set. For $i\in\{1,2\}$ set
$J_i=J'\cup\{w_i\}$ and $A_i=V(G)-J_i$. We show that both
$\{A_1,J_1\}$ and $\{A_2,J_2\}$ are vertex partitions
satisfying (ii) of the theorem.

First assume that $B$ is a heavy odd component of $E(G)-E(T)$,
and let $h=w_1w_3$ be the edge of $B$ incident with $w_1$ and
different from $g$. All the vertices of $J_1$ are now pendant
in $T$, so $T-J_1$ is a tree. Since every cotree edge is
incident with a vertex in $J_1$, the tree $T-J_1$ is induced
by~$A_1$. Observe that $\{w_1,w_3\}\subseteq J_1$ while $w_2\in
A_1$. As $J'$ is independent, we conclude that $h=w_1w_3$ is
the only edge joining a pair of vertices of $J_1$. Thus  $J_1$ is
near-independent and the partition $\{A_1,J_1\}$ satisfies
(ii)-1.

If $B$ has only one edge, namely $g=w_1w_2$, then the vertices
of $J_1$ except $w_1$ are pendant in $T$, and $w_1$ is a
$2$-valent vertex of $T$. Hence $T-J_1$ is a forest with two
components. Further, each cotree edge with respect to $T$ joins
a vertex in $J_1$ to a vertex in $A_1$, which implies that
$J_1$ is independent.

We remark that the partition $\{A_2,J_2\}$ is always of type
(ii)-2, no matter whether the unique odd cotree component is
heavy or not. In any case, the partitions $\{A_1,J_1\}$ and
$\{A_2,J_2\}$ fulfil (ii) of the theorem.

\medskip

(ii) $\Rightarrow$ (i): Assume that the vertex set of $G$ has a
partition $\{A,J\}$ satisfying the properties stated in (ii).
We distinguish two cases.

\medskip\noindent
Case 1. \textit{$A$ induces a tree and $J$ is a
near-independent set.} Let $h$ be the unique edge of the
subgraph $G[J]$, and let $S=G[A]$. Since $J$ is
near-independent, for each vertex $v$ of $J$ there exists at
least one edge joining $v$ to $S$. Choose one of them and
denote it by $t_v$. Form a spanning
subgraph $S^+$ of $G$ by adding to $S$ all the edges $t_v$
where $v\in J$. Clearly, $S^+$ is connected and acyclic, so
$S^+$ is a spanning tree of $G$ in which all the vertices of
$J$ are pendant.

If $h$ is not a loop, then $h=xy$ where $x$ and $y$ are
distinct vertices. It follows that each vertex $v$ of $J$ is
incident with two distinct cotree edges $e_v$ and $f_v$. If $v$
and $w$ are not adjacent in $G$, then the pairs
$\{e_v,f_v\}$ and $\{e_w,f_w\}$ are disjoint. However, for the
ends of $h$ we have $\{e_x,f_x\}\cap\{e_y,f_y\}=\{h\}$.
Assuming that $h=e_x=e_y$ we must conclude that $f_x\ne f_y$,
because otherwise $G[J]$ would contain both $h$ and $f_x=f_y$,
contrary to the assumption that $J$ is near-independent.
Therefore the pairs $\{e_v,f_v\}$, for $v\in J-\{x\}$, and the
singleton $\{f_x\}$ form a partition of $E(G)-E(S^+)$ which
shows that $S^+$ is a Xuong tree of $G$. In fact, the cotree
component containing $h$ is heavy.

If $h$ is a loop incident with a vertex $x$, then each vertex
$v$ of $J-\{x\}$ is incident with two distinct cotree edges
$e_v$ and $f_v$, and the pairs corresponding to distinct
vertices of $J-\{x\}$ are again disjoint. It follows that the
pairs $\{e_v,f_v\}$ for $v\in J-\{x\}$ and the singleton
$\{h\}$ form a partition of $E(G)-E(S^+)$ proving that $S^+$ is
a Xuong tree of $G$.

\medskip\noindent
Case 2. \textit{$A$ induces a forest with two components and
$J$ is independent.} Let $S_1$ and $S_2$ be the components of
the forest $G[A]$. Because $G$ is connected and $J$ is
independent, there must be a vertex $x\in J$ with a neighbour
$y_1$ in $S_1$ and a neighbour $y_2$ in $S_2$. For each vertex
$v\in J-\{x\}$ choose an arbitrary edge $t_v$ incident with
$v$, and form a spanning subgraph $T$ of $G$ extending $S_1\cup
S_2$ with the edges $xy_1$ and $xy_2$, and with all the edges
$t_v$ where $v\in J-\{x\}$. Clearly, $T$ is a spanning tree of
$G$. Observe that each vertex $v\in J-\{x\}$ is incident with
two cotree edges $e_v$ and $f_v$ while for $x$ there is a
single cotree edge $g$ incident with it. It follows that the
pairs $\{e_v,f_v\}$ for $v\in J-\{x\}$ together with the
singleton $\{g\}$ form a partition of $E(G)-E(T)$ proving that
$T$ is a Xuong tree of $G$.
\end{proof}

\begin{remark}
{\rm For loopless cubic graphs, the proof of
Theorem~\ref{thm:2-face} shows that the equivalence (i)
$\Leftrightarrow$ (ii) stated in the theorem holds true even
when Item (ii)-1 is removed from the statement. This fact,
combined with the theorem of Payan and Sakarovitch
(Theorem~\ref{thm:PS}), implies that for a cyclically
$4$-edge-connected cubic graph of order $n\equiv 0\pmod4$ one
can always guarantee the existence of a vertex partition
$\{A,J\}$ where $A$ induces a forest of two trees and $J$ is
independent. If we take into account Theorem~\ref{thm:1-face},
we can conclude that in a cyclically $4$-edge-connected cubic
graph $G$ there always exists a minimum decycling set $J$ which
is independent. In Section~\ref{sec:c4c} we show that $J$ can
also be chosen in such a way that $G-J$ is a tree (see
Theorem~\ref{thm:betterPS}). By Theorem~\ref{thm:1-face},
both properties can be achieved at the same time only when 
$n\equiv2\pmod{4}$.}
\end{remark}

As an immediate corollary of the previous two theorems we
obtain the following statement which is identical with
Theorem~\ref{thm:stable}.

\begin{corollary} \label{cor:stable}
A connected cubic graph admits a stable decycling partition if
and only if it is upper-embeddable.
\end{corollary}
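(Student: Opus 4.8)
The plan is to establish the two implications of the equivalence separately, using Theorems~\ref{thm:1-face} and~\ref{thm:2-face} as the main engines. Throughout I use the fact, recorded earlier in this section, that a connected graph is upper-embeddable exactly when it is one-face embeddable (which for a cubic graph means $n\equiv2\pmod4$) or two-face embeddable (which means $n\equiv0\pmod4$).

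Suppose first that $G$ is upper-embeddable. Then $G$ is one-face embeddable or two-face embeddable. In the first case Theorem~\ref{thm:1-face} produces a partition $\{A,J\}$ with $A$ inducing a tree and $J$ independent; in the second case Theorem~\ref{thm:2-face} produces a partition in which either $A$ induces a tree and $J$ is near-independent, or $A$ induces a forest of two trees and $J$ is independent. In each of these three outputs $J$ is independent or near-independent and $A$ induces a forest with at most two components, so the partition is stable, which settles one direction.

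For the converse, let $\{A,J\}$ be a stable decycling partition of $G$, and classify it by the number $c\in\{1,2\}$ of components of $G[A]$ and the number $e\in\{0,1\}$ of edges of $G[J]$. Three of the four combinations coincide with the hypothesis (ii) of one of our two theorems: $(c,e)=(1,0)$ is exactly Theorem~\ref{thm:1-face}(ii), whereas $(1,1)$ and $(2,0)$ are the two alternatives of Theorem~\ref{thm:2-face}(ii). Invoking the implication (ii)$\Rightarrow$(i) of the appropriate theorem shows that $G$ is one-face or two-face embeddable, and hence upper-embeddable.

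The remaining combination $(c,e)=(2,1)$ is the crux, since an incoherent partition with a near-independent $J$ is covered by neither theorem. Here $G[A]$ is a forest of two trees $S_1,S_2$ and $J$ is near-independent with a single edge $h=xy$. A short count of edges (comparing $|E(G)|=3n/2$ with the degrees of the vertices of $J$) shows that this case forces $n\equiv2\pmod4$, so the Betti number is even and it suffices to exhibit one-face embeddability. If one endpoint of $h$, say $x$, has a neighbour in each of $S_1$ and $S_2$, then $\{A\cup\{x\},\,J\setminus\{x\}\}$ is a partition of type $(1,0)$, because $A\cup\{x\}$ now induces a single tree while $J\setminus\{x\}$ is independent, and Theorem~\ref{thm:1-face} applies. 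Otherwise both $A$-neighbours of each endpoint of $h$ lie in a single tree; in this situation no single vertex can be moved into $A$ without creating a cycle, and I would instead extend $S_1\cup S_2$ directly to a spanning tree whose cotree edges pair off into adjacent pairs, exactly as in the constructions proving the implication (ii)$\Rightarrow$(i) above. Such a tree has no odd cotree component, so Theorem~\ref{thm:Xuong} again yields one-face embeddability. In all cases $G$ is upper-embeddable. The main obstacle is precisely this last, incoherent near-independent case: it is the only one not handled verbatim by the two theorems, and disposing of it requires the parity observation together with the direct Xuong-tree construction.
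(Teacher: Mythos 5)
Your forward direction, and your treatment of the three combinations $(c,e)=(1,0)$, $(1,1)$, $(2,0)$ in the converse, are exactly how the paper obtains this corollary: it is stated as immediate from Theorems~\ref{thm:1-face} and~\ref{thm:2-face}, since for $n\equiv2\pmod4$ a stable partition is precisely condition (ii) of Theorem~\ref{thm:1-face} and for $n\equiv0\pmod4$ it is one of the two alternatives in (ii) of Theorem~\ref{thm:2-face}. The trouble is your fourth case $(2,1)$. First, it is not part of the statement: in this paper a stable decycling partition means one of the three types occurring in Theorem~\ref{thm:PS} (the proof of Theorem~\ref{thm:longren-2} explicitly speaks of ``the three types of stable partitions'' and derives exactly the pairings $(1,0)$ for $n\equiv2\pmod4$ and $(1,1)$, $(2,0)$ for $n\equiv0\pmod4$). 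Your own edge count confirms this reading: in case $(2,1)$ one gets $|J|=(n+6)/4>\lceil(n+2)/4\rceil$, so such a $J$ is strictly larger than the decycling sets in the Payan--Sakarovitch structure theorem that the terminology encodes.

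Second, and more seriously, your disposal of that case contains a genuine gap that cannot be repaired: in the subcase where both $A$-neighbours of each endpoint of $h$ lie in a single tree, you merely assert that $S_1\cup S_2$ extends to a spanning tree all of whose cotree components are even, and this is false in general. Concretely, take two copies of $K_4$ with vertex sets $\{p,q,r,s\}$ and $\{p',q',r',s'\}$, subdivide $rs$ by a vertex $u_1$ and $r's'$ by a vertex $u_2$, take a block on $\{a,b,d\}$ with edges $ab$, $ad$ and two parallel edges between $b$ and $d$ (multiple edges are permitted in this paper), and join a new vertex $c$ to $u_1$, $u_2$ and $a$. This is a connected cubic graph on $14\equiv2\pmod4$ vertices. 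Putting $J=\{p,q,p',u_2,b\}$ and $A=V(G)-J$, the subgraph $G[J]$ has the single edge $pq$, and $G[A]$ is a forest of the two trees with edge sets $\{ru_1,su_1,u_1c,ca,ad\}$ and $\{q'r',q's'\}$; moreover both $A$-neighbours of $p$ and of $q$ lie in the first tree, so this configuration lands precisely in your problematic subcase. Yet for $X=\{cu_1,cu_2,ca\}$ one computes $\mathrm{ec}(G-X)+2\,\mathrm{oc}(G-X)-2=2+2\cdot2-2=4>3=|X|$, so by Theorem~\ref{thm:Nebesky} the graph is not upper-embeddable and the spanning tree you postulate does not exist. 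In other words, had $(2,1)$ genuinely been included under ``stable'', the corollary itself would be false; the correct fix is simply to delete this case, after which your proof coincides with the paper's.
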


The result of Payan and Sakarovitch \cite{PS} stated as
Theorem~\ref{thm:PS} now follows from the well known fact
\cite{PX, KG, nebesky83} that all cyclically $4$-edge-connected
graphs are upper-embeddable.

\begin{corollary}\label{cor:4cc}
Every cyclically $4$-edge-connected cubic graph admits a stable
decycling partition.
\end{corollary}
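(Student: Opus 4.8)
The plan is to obtain the corollary as an immediate consequence of Corollary~\ref{cor:stable} together with the classical fact that cyclic $4$-edge-connectivity forces upper-embeddability. Corollary~\ref{cor:stable} states that a connected cubic graph admits a stable decycling partition if and only if it is upper-embeddable, so the whole statement reduces to a single implication: every cyclically $4$-edge-connected cubic graph is upper-embeddable. Granting this, the corollary follows in one line, since a cyclically $4$-edge-connected cubic graph is in particular connected.

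For the remaining implication I would appeal to the known results of Payan and Xuong \cite{PX}, Khomenko and Glukhov \cite{KG}, and Nebesk\'y \cite{nebesky83}, each of which establishes precisely that cyclically $4$-edge-connected graphs are upper-embeddable. If a self-contained argument were preferred, I would instead verify Nebesk\'y's criterion (Theorem~\ref{thm:Nebesky}) directly. Given an arbitrary $X\subseteq E(G)$, write $H_1,\dots,H_p$ for the components of $G-X$ and set $a_j=3|V(H_j)|-2|E(H_j)|$, the number of half-edges of $X$ meeting $H_j$; since $G$ is cubic, every $a_j\ge 0$ and $\sum_{j}a_j=2|X|$. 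Doubling the inequality $\mathrm{ec}(G-X)+2\,\mathrm{oc}(G-X)-2\le|X|$ and using $p=\mathrm{ec}(G-X)+\mathrm{oc}(G-X)$, the claim to be proved becomes $\sum_{j}(a_j-2)\ge 2\bigl(\mathrm{oc}(G-X)-2\bigr)$. One then checks readily that every acyclic component contributes $a_j-2\ge 1$, so the only components that could spoil the inequality are those carrying a cycle.

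The heart of the matter, and the step I expect to be the main obstacle, is to control such cyclic (in particular, odd) components using cyclic $4$-edge-connectivity. An odd component has odd Betti number, hence contains a cycle, and the relevant observation is that if two or more components of $G-X$ carry a cycle, then each such component $H_j$ is separated from the rest of $G$ by the cycle-separating cut $\delta_G(H_j)\subseteq X$, whose size is therefore at least $\zeta(G)\ge 4$; consequently $a_j\ge 4$ and $a_j-2\ge 2$ for every such component. This makes the target inequality plausible, but the delicate part is handling the boundary configurations where it is tight: the case in which at most one component contains a cycle (so that $\mathrm{oc}(G-X)\le 1$ and the right-hand side is at most $-2$), and the cases where a cyclic component is attached by a cut of size at most $3$, which cyclic $4$-edge-connectivity must be invoked to exclude, using that a cubic graph has no acyclic dangling part and hence no bridge or small cut separating two cyclic regions. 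Assembling these cases into the single inequality above is exactly the combinatorial content carried by the cited references, which is why citing them is the cleanest route for the present corollary.
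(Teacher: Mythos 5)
Your proposal matches the paper's proof: the paper derives Corollary~\ref{cor:4cc} in exactly this one-line fashion, combining Corollary~\ref{cor:stable} with the classical fact, cited to \cite{PX, KG, nebesky83}, that cyclically $4$-edge-connected graphs are upper-embeddable. Your optional sketch of a direct verification via Nebesk\'y's criterion (Theorem~\ref{thm:Nebesky}) is a sound outline of what those references contain, but since you ultimately rest the argument on the same citations, the approach is essentially identical to the paper's.
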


We finish this section by providing a different proof of the
result of Long and Ren \cite{Long_Ren} stated as
Theorem~\ref{thm:longren}.

\begin{theorem}\label{thm:longren-2}
If $G$ is a connected cubic graph of order $n$, then
$\phi(G)\geq \lceil (n+2)/4\rceil$, and the equality holds if
and only if $G$ is upper-embeddable.
\end{theorem}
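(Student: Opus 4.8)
The plan is to reduce the whole statement to a single edge-counting identity, and then to let Theorems~\ref{thm:1-face} and~\ref{thm:2-face} supply the topological content. First I would fix a decycling set $J$ of a connected cubic graph $G$ of order $n$ and write $A=V(G)\setminus J$. Since $J$ is decycling, $G[A]$ is a forest; let $c$ be its number of components and let $e_J$ be the number of edges of $G[J]$. Counting the $3|J|$ edge-ends at vertices of $J$ gives $3|J|=2e_J+|\delta_G(A)|$, while counting all edges of $G$ gives $3n/2=e_J+|\delta_G(A)|+(|A|-c)$, using that the forest $G[A]$ has $|A|-c$ edges. Eliminating $|\delta_G(A)|$ and substituting $|A|=n-|J|$ yields
\[ |J| = \frac{n}{4} + \frac{c+e_J}{2}. \]
(Loops and parallel edges cause no trouble: a loop must lie in $G[J]$, since a loop in $G[A]$ would contradict acyclicity, and under the usual degree convention the count is unaffected.)

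For the lower bound I would observe that $c+e_J\ge 1$ for any minimum decycling set: a connected cubic graph contains cycles, so $A\ne\emptyset$, whence $c\ge 1$. The identity then gives $|J|\ge n/4+1/2=(n+2)/4$, and since $|J|$ is an integer, $\phi(G)\ge\lceil(n+2)/4\rceil$.

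For the equality I would analyse the minimum of $c+e_J$ according to $n\bmod 4$, equivalently the parity of $\beta(G)=n/2+1$. When $n\equiv2\pmod4$, integrality of $|J|$ forces $c+e_J$ to be odd, so the bound $\phi(G)=\lceil(n+2)/4\rceil$ is attained exactly when $c+e_J=1$, that is $c=1$ and $e_J=0$: a partition in which $A$ induces a tree and $J$ is independent. By Theorem~\ref{thm:1-face} such a partition exists if and only if $G$ is one-face embeddable. When $n\equiv0\pmod4$, integrality forces $c+e_J$ to be even, so the bound is attained exactly when $c+e_J=2$, that is $(c,e_J)\in\{(1,1),(2,0)\}$: either $A$ induces a tree and $J$ is near-independent, or $A$ induces a forest of two trees and $J$ is independent. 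By Theorem~\ref{thm:2-face} such a partition exists if and only if $G$ is two-face embeddable. In each regime, equality holds precisely when a partition of the minimal type exists, which happens precisely when $G$ is one-face or two-face embeddable, i.e. upper-embeddable (recalling that a cubic graph of order $2\pmod4$ is upper-embeddable only as a one-face graph, and of order $0\pmod4$ only as a two-face graph).

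Since the deep work — that the extremal partition types coincide with embeddings having at most two faces — is already carried out in Theorems~\ref{thm:1-face} and~\ref{thm:2-face}, the remaining effort is essentially the identity plus parity bookkeeping, and the theorem is a clean corollary. The one point where I would be most careful is ensuring that the \emph{extremal} value of $c+e_J$ is realized by exactly the partition types of those two theorems, rather than by some merely stable partition (for example $c=2$, $e_J=1$, which is stable but gives $|J|=n/4+3/2$); the parity constraint on $c+e_J$ imposed by integrality of $|J|$ is precisely what excludes such non-minimal configurations, so I would route the argument through Theorems~\ref{thm:1-face} and~\ref{thm:2-face} directly rather than through the weaker existence of a stable partition.
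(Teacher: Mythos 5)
Your proposal is correct and follows essentially the same route as the paper: the same double count of edge-ends at $J$ and edges of the forest $G[A]$ (your exact identity $|J|=n/4+(c+e_J)/2$ is just a tidier packaging of the paper's inequality chain $3|J|\geq 3|J|-2e_J=|\delta_G(A)|\geq n-|J|+2$), followed by the same identification of the extremal partition types and the same appeal to Theorems~\ref{thm:1-face} and~\ref{thm:2-face}, which the paper invokes in the combined form of Corollary~\ref{cor:stable}. The only cosmetic difference is that your parity argument makes explicit the ``easy counting argument'' the paper leaves to the reader when pinning down the number of components of $G[A]$.
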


\begin{proof}
Let $J$ be an arbitrary decycling set for $G$ and let
$A=V(G)-J$ be its complement. Denote by $e_A$ and $e_J$ the
numbers of edges of the induced subgraphs $G[A]$ and $G[J]$,
respectively. Since $G[A]$ is acyclic, we obtain
$$|\delta_G(A)|=3|A|-2e_A\geq 3(n-|J|)-2(n-|J|-1)=n-|J|+2.$$
On the other hand, $|\delta_G(A)|=|\delta_G(J)|=3|J|-2e_J$, so
\begin{equation}\label{eq:J}
3|J|\geq 3|J|-2e_J=|\delta_G(A)|\geq n-|J|+2,
\end{equation}
implying that $|J|\geq \lceil (n+2)/4\rceil$. Therefore $\phi(G)\geq \lceil(n+2)/4\rceil$.

We now prove that $\phi(G)=\lceil(n+2)/4\rceil$ if and only if $G$ is upper-embeddable. First assume that $\phi(G)=\lceil(n+2)/4\rceil$ and let $\{A,J\}$ be a decycling partition where the decycling set $J$ has $\lceil(n+2)/4\rceil$ vertices. If we insert $|J|=\lceil(n+2)/4\rceil$ into \eqref{eq:J},
we obtain $e_J=0$ if $n\equiv 2\pmod4$, and $e_J\leq 1$, if
$n\equiv 0\pmod4$. An easy counting argument now shows that
$G[A]$ has exactly one component either if  $n\equiv 2\pmod4$ or if
$e_J=1$ and $n\equiv 0\pmod4$. If $n\equiv 0\pmod4$ and $e_J=
0$, we derive that $G[A]$ has exactly two components. Hence, in
either case, the decycling partition $\{A,J\}$ is stable. By
Corollary~\ref{cor:stable}, $G$ is upper-embeddable.

Conversely, if $G$ is upper-embeddable, then, by
Corollary~\ref{cor:stable}, it admits a stable decycling
partition $\{A,J\}$. Direct counting yields that
$\phi(G)\le |J|=\lceil(n+2)/4\rceil$ for each of the three types of stable
partitions. As shown before, $\phi(G)\geq \lceil(n+2)/4\rceil$,  so 
$\phi(G)=\lceil(n+2)/4\rceil$, and the proof is complete.
\end{proof}


\section{Coherent decycling partitions and ample upper-embeddability}
\label{sec:ample}

\noindent{}Theorem~\ref{thm:1-face} tells us that a connected
cubic graph on $2\pmod4$ vertices has a coherent decycling
partition if and only if is one-face embeddable. By contrast,
two-face-embeddability is not sufficient for a cubic graph of
order $0\pmod4$ to have a coherent decycling partition, as
confirmed by the graphs in Figures~\ref{c1_counter}
and~\ref{fig:c2c_counter}. In this section we show that a cubic
graph of order $0\pmod4$ admits a coherent decycling partition
exactly when it possesses a stronger form of
upper-embeddability, one that allows removing a pair of
adjacent vertices without affecting upper-embeddability.

Let $G$ be an upper-embeddable cubic graph. A pair $\{x,y\}$ of
distinct vertices of $G$ will be called \emph{removable} if $x$
and $y$ are simply adjacent (that is, not connected by parallel
edges), the edge $xy$ is not a bridge, and $G-\{x,y\}$
remains upper-embeddable; otherwise a pair of adjacent vertices
is said to be \emph{non-removable}. It follows from the
definition that $G-\{x,y\}$ is connected. If $\{x,y\}$ is a
removable pair of vertices, then the graphs $G$ and $G-\{x,y\}$
have different Betti numbers. Therefore if a pair $\{x,y\}$
of vertices is removable, one of $G$ and $G-\{x,y\}$ is
one-face-embeddable and the other one is two-face-embeddable.

We say that a cubic graph $G$ is \emph{amply upper-embeddadble}
if it is upper-embeddable and contains a removable pair of
adjacent vertices. An upper-embeddable cubic graph is
\emph{tightly upper-embeddable} if it is not amply
upper-embeddable. In other words, in a tightly upper-embeddable
cubic graph the removal of every pair of simply adjacent
vertices produces either a disconnected graph or a graph with
deficiency at least $2$.

In this section we focus on amply upper-embeddable cubic graphs
with an odd Betti number, that is, with order $0\pmod 4$. For
obvious reasons we refer to them as \emph{amply two-face-embeddable} cubic graphs. A two-face-embeddable cubic graph
that is not amply two-face-embeddable is \emph{tightly
two-face-embeddable}.

Our main goal is to prove that a connected cubic graph of order
$0\pmod4$ admits a coherent decycling partition if and only if
it is amply two-face-embeddable, and to characterise such
graphs by providing a Jungerman-Xuong-type theorem for them.

We begin our study of amply upper-embeddable graphs with a
lemma showing that such graphs are loopless.

\begin{lemma}\label{lm:loop-tightly}
If an upper-embeddable cubic graph contains a loop, then it is
tightly two-face-embeddable.
\end{lemma}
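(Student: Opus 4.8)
The plan is to unpack the phrase ``tightly two-face-embeddable'' into two claims and prove them in turn: first that $G$ is two-face-embeddable (i.e. $\xi(G)=1$), and then that $G$ possesses no removable pair of adjacent vertices.

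For the first claim, let $v$ be a vertex carrying a loop and let $u$ be the unique neighbour of $v$ reached by the remaining edge $uv$. As recorded in the Preliminaries, $uv$ is a bridge, so it lies in every spanning tree of $G$. Hence, for any spanning tree $T$, the loop at $v$ is a cotree edge, and since $v$ is incident with no other cotree edge, this loop forms a one-edge, and therefore odd, component of $G-E(T)$. By the Jungerman--Xuong Theorem (Theorem~\ref{thm:Xuong}) every cotree then has at least one odd component, so $\xi(G)\ge 1$; since $G$ is upper-embeddable we also have $\xi(G)\le 1$, whence $\xi(G)=1$ and $G$ is two-face-embeddable. (One could equally invoke additivity of deficiency over the bridge $uv$, the loop side contributing deficiency $1$.)

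For the second claim, suppose towards a contradiction that $\{x,y\}$ is a removable pair. The vertex $v$ has $u$ as its only neighbour, and $uv$ is a bridge; since a removable pair must be joined by a non-bridge edge, $v\notin\{x,y\}$. Two cases remain. If $u\notin\{x,y\}$, then $u$, $v$, the edge $uv$, and the loop all survive in $G-\{x,y\}$, and $uv$ is still a bridge there (deleting it isolates $v$); the argument of the previous paragraph, which uses only Xuong's theorem and hence applies verbatim to the connected, though non-cubic, graph $G-\{x,y\}$, gives $\xi(G-\{x,y\})\ge 1$, so $G-\{x,y\}$ is not one-face-embeddable. But $G$ is two-face-embeddable, so the type-flipping property of removable pairs noted above the lemma would force $G-\{x,y\}$ to be one-face-embeddable, a contradiction. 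If instead $u\in\{x,y\}$, then deleting $u$ severs the only edge joining $v$ to the rest of $G$, leaving $v$ with its loop as an isolated component; thus $G-\{x,y\}$ is disconnected and therefore not upper-embeddable, so $\{x,y\}$ is not removable.

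Since neither case can occur, $G$ admits no removable pair, hence it is tightly upper-embeddable, and being two-face-embeddable it is tightly two-face-embeddable, as required. The only point requiring real care---and the closest thing to an obstacle---is the case split on whether $u$ is deleted, together with the observation that the odd-cotree-component argument for the loop remains valid for the non-cubic graph $G-\{x,y\}$; everything else is a direct application of Xuong's theorem and the definitions.
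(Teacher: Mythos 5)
Your proof is correct, and it reaches the paper's conclusion by a somewhat different mechanism. The paper's proof runs entirely through the additivity of the deficiency $\xi$ over bridges: it first deduces that the component $H$ of $G-uv$ on the non-loop side satisfies $\xi(H)=0$, excludes both ends of the bridge from any removable pair at once (using the convention that both endvertices of the bridge count as cutvertices), and then, for a putative removable pair $\{x,y\}\subseteq V(H)$, uses the Betti-parity of $H-\{x,y\}$ to get $\xi(H-\{x,y\})\ge 1$ and additivity again to conclude $\xi(G-\{x,y\})\ge 2$, directly contradicting upper-embeddability of $G-\{x,y\}$. You instead isolate the reusable observation that a loop whose companion edge is a bridge forms a one-edge odd component of \emph{every} cotree, so that by the Jungerman--Xuong theorem no such graph can be one-face embeddable; applying this to $G$ gives $\xi(G)=1$, and applying it to $G-\{x,y\}$ (when the neighbour $u$ survives) contradicts the Betti-parity ``type-flip'' fact stated just before the lemma, while the case $u\in\{x,y\}$ is killed by disconnection. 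Both arguments are sound; your route is slightly more elementary and self-contained (it needs only the Xuong-tree obstruction plus the parity flip, never computing deficiencies of the pieces), whereas the paper's route yields the stronger quantitative conclusion $\xi(G-\{x,y\})\ge 2$, not merely that $G-\{x,y\}$ fails to be one-face embeddable. One small point you handled correctly but should keep explicit: applying the cotree argument to the non-cubic graph $G-\{x,y\}$ is legitimate precisely because the Jungerman--Xuong characterisation holds for arbitrary connected graphs, and removability itself guarantees the connectivity you need.
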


\begin{proof}
Let $G$ be an upper-embeddable graph with a loop at a vertex
$u$ and let $v$ be the neighbour of $u$. Since the deficiency
is additive over bridges, the component $H$ of $G-uv$
containing $v$ is one-face embeddable, and hence the entire $G$
is two-face-embeddable. Suppose, to the contrary, that $G$
contains a removable pair of vertices, say $\{x,y\}$. Taking
into account that $uv$ is a bridge and $v$ is a cut-vertex of
$G$ we conclude that $\{x,y\}\cap\{u,v\}=\emptyset$. In
particular, both $x$ and $y$ are contained in $H$. We wish to
prove that $G-\{x,y\}$  has deficiency at least $2$. Since
$G-\{x,y\}$ is connected, so is $H-\{x,y\}$, given the position
of $x$ and $y$ within $G$. Recall that $\xi(H)=0$. It follows
that $H-\{x,y\}$ has an odd Betti number, and therefore
\mbox{$\xi(H-\{x,y\})\ge 1$}. Using the additivity of $\xi$ over
bridges again we eventually obtain that  \mbox{$\xi(G-\{x,y\})\ge 2$}, 
which contradicts the assumption that $\{x,y\}$ is removable.
Hence, $G$ is tightly upper-embeddable.
\end{proof}

The next lemma explores the relation of ample
upper-embeddability to the existence of heavy components in the
cotree of a Xuong tree.

\begin{lemma}\label{lm:amply}
Let $G$ be an upper-embeddable cubic graph. If $G$ contains a
Xuong tree whose cotree has a heavy component, then $G$ is
amply upper-embeddable.
\end{lemma}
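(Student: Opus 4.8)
The plan is to prove Lemma~\ref{lm:amply} by producing a removable pair of adjacent vertices directly from the heavy cotree component. By Lemma~\ref{lm:loop-tightly} an amply upper-embeddable cubic graph must be loopless, so as a first step I would verify that the hypothesis forces $G$ to be loopless: if $G$ had a loop it would be tightly two-face-embeddable, but a Xuong tree with a heavy component will turn out to witness ample upper-embeddability, so the two possibilities cannot coexist. More concretely, I expect $G$ has order at least $4$, which lets me invoke Lemma~\ref{lm:acyclic}: since $G$ has a Xuong tree with a heavy component, it also has a Xuong tree $T$ with \emph{acyclic} cotree containing a heavy component $B$ of the same parity. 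Working with an acyclic cotree is crucial because then every cotree component is a path, and a heavy path has length at least $3$.

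The heart of the argument is to locate the removable pair inside the heavy path $B$. Since $B$ is heavy, its parity matches $\beta(G)$, and $B$ has at least three edges. The idea is to pick two adjacent vertices $x,y$ lying on $B$ in such a way that deleting them from $G$ both preserves connectivity and reduces the deficiency to within the upper-embeddable range. First I would confirm that the edge $xy$ can be chosen to be non-bridge and simple: because $xy$ lies on the cotree path $B$, it is a cotree edge and hence lies on a cycle of $G$, so it is not a bridge; looplessness rules out $xy$ being a loop, and in a Xuong tree with a suitably chosen heavy component the two vertices are simply adjacent. The key quantitative point is to track how the cotree of $T$ restricts to $G-\{x,y\}$: removing $x$ and $y$ deletes the edges incident to them, and I would argue that the remaining cotree edges, together with the tree edges of $T$ that survive, can be reorganised into a spanning tree of $G-\{x,y\}$ whose cotree has at most one odd component. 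In other words, I would exhibit an explicit Xuong-type tree for $G-\{x,y\}$ certifying $\xi(G-\{x,y\})\le 1$ via Theorem~\ref{thm:Xuong}.

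The cleanest route is to choose $x$ and $y$ to be an appropriate pair of consecutive internal vertices of the heavy path $B=(b_0 b_1\cdots b_\ell)$ with $\ell\ge 3$. Deleting two adjacent vertices of $G$ removes six edge-endpoints; since each of $x,y$ has its two $B$-neighbours plus one tree edge, I would check that the local surgery splits the single heavy path into shorter cotree pieces whose parities I can control, while the tree edges incident with $x$ and $y$ get absorbed or rerouted so that the resulting spanning structure of $G-\{x,y\}$ stays connected. The parity bookkeeping is what makes this work: because $B$ was heavy (matching $\beta(G)$), removing two vertices changes the Betti number by the right amount so that the deficiency does not jump above $1$. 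Formally, I would count that $\beta(G-\{x,y\}) = \beta(G)-2$ (removing two adjacent degree-$3$ vertices on a common cycle deletes five edges and two vertices), so the parity of the Betti number flips twice and returns to matching the newly built cotree, keeping the number of odd cotree components at most one.

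The hard part will be the final verification that $G-\{x,y\}$ is connected and that the reorganised cotree has at most one odd component simultaneously; connectivity can fail if the two tree edges at $x$ and $y$ were the only links between parts of $T$, so the choice of which two adjacent vertices of $B$ to delete must be made carefully, possibly after first applying Lemma~\ref{lm:acyclic} to arrange the heavy component and its tree attachments in a convenient position. I expect the cleanest formulation is to delete the pendant end $b_\ell$ of $B$ together with its $B$-neighbour $b_{\ell-1}$, so that the tree edge at $b_\ell$ is a pendant tree edge whose deletion does no harm, and the residual portion $(b_0\cdots b_{\ell-2})$ remains a cotree path of the correct parity; verifying that this residual path, together with the untouched cotree components, yields deficiency at most $1$ is the decisive step, and it is where the heaviness hypothesis (length $\ge 3$ and matching parity) is used in full.
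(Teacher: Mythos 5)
Your overall strategy coincides with the paper's: invoke Lemma~\ref{lm:acyclic} to replace the heavy component by a path $B=(b_0b_1\cdots b_\ell)$, $\ell\ge3$, delete a suitable pair of adjacent vertices of $B$, and certify upper-embeddability of the remainder via Theorem~\ref{thm:Xuong}. But your concrete choice of the pair is the wrong one, and the decisive step fails for it. You propose deleting the pendant end $b_\ell$ together with $b_{\ell-1}$, asserting that ``the tree edge at $b_\ell$ is a pendant tree edge.'' This is false: $b_\ell$ has degree $1$ in the cotree path $B$, hence, $G$ being cubic, it is incident with \emph{two} tree edges; it is a degree-$2$ vertex of $T$, so $T-\{b_{\ell-1},b_\ell\}$ is a forest with two components, not a spanning tree. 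Connectivity of $G-\{b_{\ell-1},b_\ell\}$ is then not automatic and can genuinely fail -- the lemma must cover graphs of connectivity $1$, as in Figure~\ref{c1_counter} -- and you get no Xuong-type certificate without an uncontrolled rerouting step. The parity bookkeeping breaks as well: your pair removes only \emph{two} cotree edges of $B$, so the residual path $(b_0\cdots b_{\ell-2})$ has the \emph{same} parity as $B$, i.e.\ the parity of $\beta(G)$; but $\beta(G-\{x,y\})=\beta(G)-3$ (two vertices and five edges are deleted), not $\beta(G)-2$ as you compute, so the Betti parity flips once while the parity of your residual component does not -- a mismatch, not the claimed match.

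The repair is exactly the choice made in the paper: take $e=uv$ to be the edge of $B$ adjacent to a pendant edge of $B$, i.e.\ $u=b_1$, $v=b_2$ (possible since $\ell\ge 3$). Both are \emph{internal} vertices of the path, hence each carries two cotree edges and exactly one tree edge, so both are pendant in $T$; consequently $T-\{u,v\}$ is a spanning tree of $G'=G-\{u,v\}$ and connectivity comes for free. This deletion removes \emph{three} cotree edges $b_0b_1$, $b_1b_2$, $b_2b_3$, so the surviving piece $(b_3\cdots b_\ell)$ has $\ell-3$ edges, of parity opposite to $\ell$, matching the flip of $\beta$: if $\beta(G)$ is even ($\ell\ge4$ even, all cotree components even), the new cotree has exactly one odd component; if $\beta(G)$ is odd ($\ell\ge3$ odd, $B$ being the unique odd component of the Xuong cotree), all components of $G'-E(T')$ become even. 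In both cases Theorem~\ref{thm:Xuong} yields $\xi(G')\le1$, so $\{u,v\}$ is removable. One further remark: your opening paragraph on looplessness is circular as written, since you invoke the lemma's conclusion (that the heavy Xuong tree witnesses ample upper-embeddability) to rule out loops. A non-circular argument is available: in a loopy upper-embeddable cubic graph the loop forms a one-edge odd cotree component of every Xuong tree, hence is the \emph{unique} odd component and is light, so no Xuong tree can have a heavy component at all and the hypothesis already forces looplessness.
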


\begin{proof}
Let $T$ be a Xuong tree of $G$ with a heavy cotree component
$K$. By Lemma~\ref{lm:acyclic} we may assume that $K$ is a
path. Since the length of $K$ is at least~$3$, $K$ contains an
edge $e=uv$ adjacent to a pendant edge of $K$. Consider the
graph $G'=G-\{u,v\}$. The vertices $u$ and $v$ are pendant
vertices of $T$, so $T'=T-\{u,v\}$ is a spanning tree of $G'$;
in particular, $G'$ is connected. If $\beta(G)$ is even, then
$K-\{u,v\}$ has a single non-trivial odd component. Hence $G'$
is upper-embeddable and $G$ is amply upper-embeddable. If
$\beta(G)$ is odd, then $K$ has an odd number of edges,
and $K-\{u,v\}$ consists of even components. It follows that
all components of $G'-E(T')$ are even, so $G'$ is
upper-embeddable, and therefore $G$ is again amply
upper-embeddable.
\end{proof}

Now we are ready for a characterisation of amply
upper-embeddable graphs with an odd Betti number.

\begin{theorem}\label{thm:amply2-face}
The following statements are equivalent for every connected
cubic graph~$G$.
\begin{itemize}
\item[{\rm (i)}] $G$ is amply two-face embeddable.

\item[{\rm (ii)}] $G$ has a Xuong tree with a single odd
    cotree component, which is heavy.

\item[{\rm (iii)}] $G$ admits a coherent
    decycling bipartition.
\end{itemize}
\end{theorem}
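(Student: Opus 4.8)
The plan is to use (ii) as the central condition and to prove the four implications (ii)$\Rightarrow$(i), (ii)$\Rightarrow$(iii), (iii)$\Rightarrow$(ii) and (i)$\Rightarrow$(ii), which together give (i)$\Leftrightarrow$(ii)$\Leftrightarrow$(iii). I would first record that each of the three statements forces $\beta(G)$ to be odd, i.e.\ $n\equiv0\pmod4$: a Xuong tree with a single odd cotree component has deficiency $\xi(G)=1$, and by Theorem~\ref{thm:2-face} a coherent partition is a two-face partition. With this in hand the implication (ii)$\Rightarrow$(i) is immediate from Lemma~\ref{lm:amply}, since for a graph of odd Betti number ample upper-embeddability is the same as ample two-face embeddability.

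For (ii)$\Rightarrow$(iii) I would apply Lemma~\ref{lm:acyclic} to pass to a Xuong tree $T$ with acyclic cotree that still carries a heavy component $B$; as $\beta(G)$ is odd, $B$ is a path with an odd number of edges, at least three. This is exactly the configuration treated in the heavy-component branch of the proof of Theorem~\ref{thm:2-face}: choosing a pendant edge $g=w_1w_2$ of $B$ and pairing the remaining cotree edges produces a partition in which $G[A_1]$ is a tree and $G[J_1]$ consists of the single edge $w_1w_3$ joining $w_1$ to its neighbour $w_3$ on $B$, that is, a coherent partition. Conversely, (iii)$\Rightarrow$(ii) comes from the reverse construction (Case~1 of the same proof): attaching each vertex of the near-independent set $J$ to the tree $G[A]$ yields a spanning tree $S^+$ that is a Xuong tree whose odd cotree component, the one containing the unique edge $h$ of $G[J]$, is heavy.

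The implication that requires a fresh construction is (i)$\Rightarrow$(ii). Let $\{x,y\}$ be a removable pair and set $G^-=G-\{x,y\}$. Then $G^-$ is connected, and since $\beta(G^-)=\beta(G)-3$ is even, $G^-$ is one-face embeddable; by Theorem~\ref{thm:Xuong} it has a spanning tree $T^-$ whose cotree is entirely even. Writing the neighbours of $x$ as $y,a,b$ and those of $y$ as $x,c,d$, I would extend $T^-$ to a spanning tree $T$ of $G$ by adjoining the two tree edges $xa$ and $yc$, so that the three edges $xy$, $xb$, $yd$ become the new cotree edges. Linked through $x$ and $y$ along the path $b\,x\,y\,d$, these three edges merge the at most two even cotree components meeting $b$ and $d$ into a single component whose edge count grows by $3$; that component therefore becomes odd while retaining at least three edges, and every other component stays even. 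Hence $T$ is a Xuong tree of $G$ with a unique, heavy odd cotree component, which is (ii), closing the cycle.

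The delicate point throughout is the presence of loops, and this is where I expect the genuine work to lie. Lemma~\ref{lm:loop-tightly} shows that a looped upper-embeddable cubic graph is tightly two-face embeddable, so (i) fails for it; correspondingly, in the passage (iii)$\Rightarrow$(ii) the unique edge $h$ of $G[J]$ may be a loop, and then the construction delivers only a one-edge odd component, which is not heavy. I would therefore carry out all three constructions under the assumption that $G$ is loopless, treating a loop separately via Lemma~\ref{lm:loop-tightly} and the bridge-deletion device from the opening of the proof of Theorem~\ref{thm:2-face}. Reconciling the loop case uniformly across the three statements, so that $B$, the merged component, and the edge $h$ are all genuine edges spanning a subgraph with at least three edges, is the principal obstacle; for the loopless graphs relevant to the applications the four implications above settle the equivalence.
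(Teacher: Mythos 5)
Your proposal is correct and follows essentially the same route as the paper: (ii)$\Rightarrow$(i) via Lemma~\ref{lm:amply}, (ii)$\Rightarrow$(iii) via Lemma~\ref{lm:acyclic} followed by the pairing construction on the acyclic cotree with its heavy odd path, (iii)$\Rightarrow$(ii) via Case~1 of the proof of Theorem~\ref{thm:2-face}, and your (i)$\Rightarrow$(ii) construction --- delete the removable pair, take a Xuong tree of the one-face embeddable remainder with all cotree components even (Theorem~\ref{thm:Xuong}), and reattach so that the three new cotree edges $xb$, $xy$, $yd$ merge the even components through the path $b\,x\,y\,d$ into a single heavy odd component --- is, up to notation, exactly the paper's argument. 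The loop caveat you flag at the end is genuine and, if anything, is handled more carefully by you than by the paper: the paper disposes of loops only in the direction (i)$\Rightarrow$(ii) (via Lemma~\ref{lm:loop-tightly}), while its citation of Case~1 of Theorem~\ref{thm:2-face} for (iii)$\Rightarrow$(ii) produces a heavy odd component only when the unique edge $h$ of $G[J]$ is not a loop, so looplessness (or the restriction to the non-loop reading of near-independence) is indeed implicitly assumed there, just as you suspected.
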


\begin{proof}
(i) $\Rightarrow$ (ii): Let $G$ be an amply two-face-embeddable
graph, and let $u$ and $v$ be a removable pair of vertices of
$G$. By Lemma~\ref{lm:loop-tightly}, none of $u$ and $v$ is
incident with a loop. Since $u$ and $v$ are not doubly
adjacent, $u$ has a neighbour different from $v$, and vice
versa. Let $u_1$ and $u_2$ be the other two neighbours of $u$
(possibly $u_1=u_2$) and let $v_1$ and $v_2$ be the other two
neighbours of $v$ (possibly $v_1=v_2$). Since $G$ has an odd
Betti number, the Betti number of $G'=G-\{u,v\}$ must be even,
and therefore $G'$ is one-face embeddable. By Xuong's Theorem,
$G'$ has a Xuong tree $T'$ with even cotree components. Extend
$T'$ by adding the edges $u_1u$ and $v_1v$ to $T'$. The
resulting subgraph $T$ is clearly a spanning tree of $G$. Note
that both $u_2$ and $v_2$ were contained in even components of
$G'-E(T')$. In $G-E(T)$ the vertices $u_2$ and $v_2$ lie in the
same component which includes the path $u_2uvv_2$. This
component is odd and heavy, while the remaining cotree
components remain even. Thus $T$ is the required spanning tree
of~$G$.

\medskip

(ii) $\Rightarrow$ (iii): Assume that $G$ has a Xuong tree $T$
with a single odd cotree component, which is heavy. Clearly,
$G$ must be loopless and of order at least~$4$. By
Lemma~\ref{lm:acyclic} we can assume that $E(G)-E(T)$ is
acyclic, so the only odd component $B$ of $G-E(T)$ is a path of
length at least three. We now construct a partition
$\mathcal{P}$ of $E(G)-E(T)$ into pairs of adjacent edges and
one singleton in such a way that the singleton is a pendant
edge of~$B$. For each pair $\{e,f\}$ from $\mathcal{P}$ pick a
vertex shared by both edges $e$ and $f$, while for the
singleton $\{g\}$ pick the end-vertex which is not a pendant
vertex of $B$. Let $J$ be the set of vertices obtained from
$\mathcal{P}$ in this way, and set $A'=V(G)-J$. Observe that
each vertex in $J$ is a pendant vertex of $T$, so the subgraph
induced by the set $A'$ is the tree $T-J$. Furthermore, the
subgraph induced by $J$ is near-independent and its single edge
is the edge of $B$ adjacent to $g$. This shows that $\{A',J\}$
is the the required partition of $G$.

\medskip

The implication (iii) $\Rightarrow$ (ii) was proved in Case~1
of the proof of Theorem~\ref{thm:2-face}, so it remains to
prove only the implication (ii) $\Rightarrow$ (i). Assume that
$G$ has a Xuong tree with a single odd cotree component, which
is heavy. Then $G$ is upper-embeddable, and by
Lemma~\ref{lm:amply} it is amply upper-embeddable.
\end{proof}

\begin{corollary}
Let $G$ be a connected cubic graph on $n$ vertices where
$n\equiv 0$ $\pmod 4$. If $G$ admits a Xuong tree with a unique
odd cotree component which has at least three edges, then $G$
admits a coherent stable decycling partition.
\end{corollary}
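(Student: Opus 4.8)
The plan is to read the hypothesis as a verbatim instance of condition~(ii) of Theorem~\ref{thm:amply2-face} and then quote the implication (ii)~$\Rightarrow$~(iii) of that theorem. The only genuine verification needed is that the prescribed odd cotree component qualifies as \emph{heavy}.

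First I would note that, because $n\equiv0\pmod4$, the Betti number $\beta(G)$ is odd, as recorded in Section~\ref{sec:cyc_connectivity}. By assumption $G$ has a Xuong tree $T$ whose cotree $G-E(T)$ contains exactly one odd component $B$, and $B$ has at least three edges. Recall that a cotree component is called heavy when it has at least three edges and its number of edges has the same parity as $\beta(G)$. Since $B$ has an odd number of edges and $\beta(G)$ is likewise odd, the parities coincide; combined with the bound $|E(B)|\ge 3$ this shows that $B$ is heavy. Consequently $G$ possesses a Xuong tree with a single odd cotree component that is heavy, which is exactly statement~(ii) of Theorem~\ref{thm:amply2-face}.

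Finally I would apply the implication (ii)~$\Rightarrow$~(iii) of Theorem~\ref{thm:amply2-face} to obtain a coherent decycling bipartition of $G$, that is, a partition $\{A,J\}$ in which $A$ induces a tree and $J$ is independent or near-independent. Such a partition is precisely a coherent stable decycling partition in the sense of the introduction, which completes the proof.

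As the argument makes clear, there is no real obstacle beyond correctly matching the parity of $B$ with that of $\beta(G)$; all the substantive work has already been carried out in Theorem~\ref{thm:amply2-face}, so the statement is a direct corollary of that result.
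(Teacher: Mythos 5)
Your proposal is correct and matches the paper's intent exactly: the corollary is stated without proof precisely because it is the immediate specialisation of Theorem~\ref{thm:amply2-face}, and the only content is the parity check you carry out, namely that when $n\equiv0\pmod4$ the Betti number is odd, so an odd cotree component with at least three edges is heavy, placing $G$ under hypothesis~(ii) and yielding the coherent partition via (ii)~$\Rightarrow$~(iii). Nothing further is needed.
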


\section{Coherent partitions of cyclically $4$-edge-connected graphs}
\label{sec:c4c}

\noindent{}In this section we prove that every cyclically
$4$-edge-connected cubic graph admits a coherent decycling
partition. This result strengthens Theorem~\ref{thm:PS} by
eliminating the second possibility in its item (ii). The proof
uses two important results, proved by Payan and Sakarovitch
\cite{PS} and by Wormald \cite{wormald}, respectively. The
former guarantees the existence of a stable decycling partition
$\{A,J\}$ where the decycling set $J$ contains an arbitrary
preassigned vertex. The latter result enables us to create
cyclically $4$-edge-connected cubic graphs from smaller ones by
adding an edge and increasing the order by $2$. The
corresponding operation is called an \emph{edge extension} and
is executed as follows. In a given cubic graph $G$ we take two
nonadjacent edges, subdivide each of them by a new vertex and
join the two resulting 2-valent vertices by a new edge. The
reverse operation is an \emph{edge reduction}.

\begin{theorem}{\rm (Payan and Sakarovitch \cite{PS})}\label{thm:Psvertex}
Let $G$ be a cyclically $4$-edge-connected cubic graph and let
$v$ be an arbitrary vertex of $G$. Then $G$ admits a stable
decycling partition $\{A,J\}$ such that $v$ belongs to the
decycling set $J$.
\end{theorem}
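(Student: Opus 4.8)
The plan is to translate the requirement $v\in J$ into a condition on a Xuong tree and then to construct such a tree. Since $G$ is cyclically $4$-edge-connected, it is upper-embeddable (Corollary~\ref{cor:4cc}), and by Lemma~\ref{lm:acyclic} it has a Xuong tree $T$ whose cotree $G-E(T)$ is a disjoint union of paths. Recall how the stable partitions of Theorems~\ref{thm:1-face} and~\ref{thm:2-face} are built: the cotree is decomposed into pairs of adjacent edges (together with one singleton taken from the odd component when $\beta(G)$ is odd), and $J$ is assembled by choosing one shared vertex from each pair and a suitable end of the singleton. I would first isolate the following sufficient condition. Suppose $v$ is a pendant vertex of $T$, so that two of its three incident edges, say $e$ and $f$, lie in the cotree and make $v$ an interior vertex of some cotree path $P$; suppose moreover that $v$ is \emph{adjacent to an end of $P$}, i.e.\ one of $e,f$ is a pendant edge of $P$. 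If $P$ is an even component, then $\{e,f\}$ can be taken as one pair of the decomposition with $v$ as its representative; if $v$ lies on the single odd component (necessarily heavy, since it then has at least three edges), the pendant one of $e,f$ can serve as the distinguished singleton with $v$ as its non-pendant end. In either case $v$ is selected into $J$, and the resulting partition is stable, exactly as in the proofs of Theorems~\ref{thm:1-face}, \ref{thm:2-face}, and~\ref{thm:amply2-face}.

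The heart of the argument is then the claim that, for every vertex $v$, the graph $G$ has a Xuong tree with acyclic cotree in which $v$ is pendant and adjacent to an end of its cotree path. I would prove this by starting from an arbitrary acyclic-cotree Xuong tree and performing edge exchanges that first lower $\deg_T(v)$ to one and then slide $v$ into the end-adjacent position. To lower the tree-degree, pick a tree edge $f$ incident with $v$; removing $f$ splits $T$ into two parts, and since $G$ is bridgeless this cut is crossed by a cotree edge $g$, whose fundamental cycle then contains $f$, so the exchange $T\mapsto T-f+g$ is again a spanning tree with $\deg(v)$ reduced by one. Once $v$ is pendant, a further exchange along its cotree path removes the extra cotree edge at a cotree-neighbour of $v$, turning that neighbour into a path-end. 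Each exchange must be chosen so as not to raise the number of odd cotree components above $\xi(G)$, which can be arranged by absorbing the displaced edge into a path of the prescribed parity, in the same spirit as the path-lengthening argument of Lemma~\ref{lm:acyclic}.

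The main obstacle I anticipate is controlling, simultaneously, the parity of $v$'s position in its path and the number of odd cotree components through these exchanges. Pairing the two cotree edges at an \emph{interior} vertex of an even path is blocked by a parity constraint, which is precisely why forcing $v$ to the end-adjacent position (where one side of the split path is empty) is indispensable. This is where cyclic $4$-edge-connectivity is genuinely used. Bridgelessness guarantees that the fundamental cycles needed for the degree-reducing exchanges exist at all, while the absence of small cycle-separating cuts guarantees that the relevant cut is large enough to contain a crossing cotree edge \emph{not} incident with $v$ and that the exchanges can be performed without being pinned by a trivial cut or increasing the deficiency. These are exactly the obstructions that arise in graphs of low connectivity and make the prescribed-vertex statement fail there. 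Once the claimed Xuong tree is in hand, the construction of the first paragraph yields a stable decycling partition $\{A,J\}$ with $v\in J$, completing the proof.
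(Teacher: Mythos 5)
The paper never proves Theorem~\ref{thm:Psvertex}: it is imported verbatim from Payan and Sakarovitch \cite{PS} and used as a black box in Section~\ref{sec:c4c}, so your proposal has to stand entirely on its own. Its first paragraph does stand: translating ``$v\in J$'' into a position of $v$ on its cotree path, relative to a Xuong tree with acyclic cotree, is correct and consistent with the pairing constructions in the proofs of Theorems~\ref{thm:1-face} and~\ref{thm:2-face} (in fact any position of $v$ at odd distance from an end of its path would serve, not only the end-adjacent one). The genuine gap is the second paragraph. The claim that every vertex admits a Xuong tree with acyclic cotree in which $v$ is pendant and end-adjacent carries essentially the full strength of the theorem, and the exchange argument offered for it does not establish it: the swap $T\mapsto T-f+g$ reshapes the cotree globally (deleting $g$ splits its path, adding $f$ can merge components or close a cycle), and the number of odd cotree components can rise above $\xi(G)\le 1$. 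Your one-sentence fix --- ``absorbing the displaced edge into a path of the prescribed parity'' --- is an assertion, not an argument; it is not analogous to Lemma~\ref{lm:acyclic}, where the new attachment vertex $v_1$ is \emph{forced} into a component of known parity precisely because $T$ is already a Xuong tree, whereas here the endpoints of the displaced edge land in components of uncontrolled parity.

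Two further concrete failures. First, the ``sliding'' step does not do what you claim: if $v=x_i$ is interior to its cotree path and you add the cotree edge $x_{i-2}x_{i-1}$ to $T$, the unique tree edge at $x_{i-1}$ lies on the fundamental cycle, but deleting \emph{it} turns it into a new cotree edge at $x_{i-1}$, which therefore remains $2$-valent in the cotree --- no path-end is created, and the component may even lengthen through the far endpoint of the deleted edge; deleting a tree edge elsewhere on the fundamental cycle instead dumps a cotree edge at an uncontrolled location with uncontrolled parity effects. Second, cyclic $4$-edge-connectivity is never genuinely used: the steps one can actually verify (a crossing cotree edge avoiding $v$ in the fundamental cut) need only $3$-edge-connectivity, since every such cut in a $3$-connected cubic graph has at least three edges and at most one of them besides $f$ can be incident with $v$. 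But the theorem is \emph{false} for cubic graphs of cyclic connectivity $3$, as the paper records in the remark following Theorem~\ref{wormald}; hence the unproved parity-control step is exactly where the hypothesis must do real work, and it is precisely the step that is missing. As it stands, the proposal proves the easy reduction but leaves the heart of Payan and Sakarovitch's theorem unproved.
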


\begin{theorem}{\rm (Wormald \cite{wormald})}\label{wormald}
Every cyclically $4$-edge-connected cubic graph other than
$K_4$ and $Q_3$ can be obtained from a cyclically
$4$-edge-connected cubic graph with fewer vertices by an
edge-extension.
\end{theorem}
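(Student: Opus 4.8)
The plan is to prove the equivalent \emph{reduction} statement: every cyclically $4$-edge-connected cubic graph $G\neq K_4,Q_3$ contains an edge whose edge-reduction produces a smaller cyclically $4$-edge-connected cubic graph. Iterating such reductions terminates at the two base graphs and yields the desired sequence of edge-extensions. First I would settle simplicity. A triangle in a cubic graph on more than four vertices is cut off by a nontrivial $3$-edge-cut, so every such $G\neq K_4$ is triangle-free. Writing $N(x)=\{y,a,b\}$ and $N(y)=\{x,c,d\}$ for the ends of an edge $e=xy$, triangle-freeness forces $a,b,c,d$ to be four distinct vertices with $ab,cd\notin E(G)$; hence the reduced graph $G_e=G-\{x,y\}+ab+cd$ is automatically a simple cubic graph of order $n-2$. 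Reducibility of $e$ is therefore \emph{solely} the question whether $G_e$ stays cyclically $4$-edge-connected, i.e.\ (for cubic graphs) whether $G_e$ has no nontrivial $3$-edge-cut.

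The technical core is a local-to-global lemma pinning down when reduction fails. Given a nontrivial $3$-edge-cut $S$ of $G_e$, I would pull it back to $G$: replace each artificial edge $ab$ or $cd$ that lies in $S$ by the two-edge path through $x$ or $y$, and reinsert $x$ and $y$ on the sides prescribed by the positions of $a,b$ and of $c,d$. A short case analysis over which of $ab,cd$ belong to $S$ shows that in every configuration except one the pulled-back edge set is a nontrivial $3$-edge-cut of $G$, contradicting cyclic $4$-edge-connectivity. The single surviving configuration has $\{a,b\}$ and $\{c,d\}$ on opposite sides of $S$ and $S\subseteq E(G)$; there $S\cup\{xy\}$ is a cycle-separating $4$-edge-cut of $G$ in which $x$ and $y$ lie on opposite sides and $xy$ is the only cut edge meeting either of them. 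Thus $e$ is non-reducible precisely when it lies in such a \emph{tight} cycle-separating $4$-edge-cut.

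Now suppose, for contradiction, that every edge of $G$ is non-reducible, so tight cycle-separating $4$-edge-cuts abound. I would pick one, $\delta(X)$, whose smaller side $X$ has the fewest vertices. Both sides of a cycle-separating cut carry a cycle, so $|X|\geq 4$ and $G[X]$ contains an edge $uv$. Its tight $4$-cut $\delta(U)$ (say $u\in U$, $v\in\overline U$) cannot satisfy $U\subseteq X$ or $\overline U\subseteq X$, since either would give a cycle-separating $4$-cut with a side smaller than $X$; hence $\delta(U)$ crosses $\delta(X)$. Applying submodularity of the cut function to the pairs $(X,U)$ and $(X,\overline U)$ gives $|\delta(X\cap U)|+|\delta(X\cup U)|\leq 8$ and $|\delta(X\cap\overline U)|+|\delta(X\cup\overline U)|\leq 8$. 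The sets $X\cap U\ni u$ and $X\cap\overline U\ni v$ are proper subsets of $X$, so by minimality neither can bound a cycle-separating cut; as their complements are large and cyclic, both $G[X\cap U]$ and $G[X\cap\overline U]$ must be acyclic. In other words, deleting the at most four edges of $\delta(U)$ lying inside $X$ splits the cyclic graph $G[X]$ into two forests.

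Imposing this last property simultaneously for every internal edge of $X$ is extremely rigid. It should force $G[X]$ to be a single cycle whose tight $4$-cuts align into the three parallel edge-classes of the cube, and repeating the analysis on the side $\overline X$ leaves $G\cong Q_3$ as the only graph consistent with all the constraints. Converting this bundle of non-crossing and acyclicity conditions into the exact identification $G\cong Q_3$ --- while carefully tracking the reinserted vertices $x,y$ and verifying that the cuts discarded during uncrossing are genuinely non-cycle-separating --- is where I expect the real difficulty to lie. The reduction to a tight $4$-cut and the submodular uncrossing are, by comparison, routine.
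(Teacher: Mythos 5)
The first thing to note is that the paper does not prove this statement at all: Theorem~\ref{wormald} is imported verbatim from Wormald \cite{wormald}, so the benchmark here is Wormald's original generation theorem, not anything in the text. Measured against that, your opening two stages are sound and are essentially the standard route: the contrapositive framing via edge reductions, the simplicity of $G_e$ from triangle-freeness (correct, since a triangle in a cubic graph on more than four vertices yields a cycle-separating $3$-edge-cut, and triangle-freeness gives $a,b,c,d$ distinct with $ab,cd\notin E(G)$), and the local lemma that $e=xy$ is non-reducible exactly when it lies in a cycle-separating $4$-edge-cut $S\cup\{xy\}$ with $\{a,b\}$ and $\{c,d\}$ on opposite sides. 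Your case analysis for the pull-back is right, including the one configuration where the edge $xy$ itself joins the two sides and inflates the cut to size~$4$.

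The gap is in the third stage, and it is genuine on two counts. First, the step ``by minimality neither can bound a cycle-separating cut, so $G[X\cap U]$ and $G[X\cap\overline{U}]$ are acyclic'' does not follow as stated: minimality of $|X|$ only excludes cycle-separating $4$-cuts with a smaller side, so $\delta(X\cap U)$ could a priori be a cycle-separating $5$-cut; submodularity then forces $|\delta(X\cup U)|=3$, and since $G[X]$ contains a cycle this $3$-cut cannot be cycle-separating, which in a cubic graph forces the corner $\overline{X}\cap\overline{U}$ to be a \emph{single vertex} --- a configuration your argument never excludes and which must be dispatched separately before the acyclicity conclusion is available. Second, and more seriously, the entire endgame is conceded rather than proved: the claims that the acyclicity constraints ``should force'' $G[X]$ to be a single ($4$-)cycle, that the tight cuts ``align into the three parallel edge-classes of the cube,'' and that repeating the analysis on $\overline{X}$ pins down $G\cong Q_3$, are exactly the substance of Wormald's theorem --- this is where $Q_3$ (as opposed to, say, $K_{3,3}$, which your scheme correctly reduces to $K_4$) emerges as the unique obstruction, and you explicitly flag it as the part you have not done. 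As it stands you have a correct reduction to, and a plausible plan for, the crux, but the crux itself --- the structural identification of the minimal side and of the exceptional graph --- is missing, so the proposal is an incomplete proof rather than a complete alternative to Wormald's argument.
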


\begin{remark}
{\rm In \cite{PS}, Payan and Sakarovitch observed  that
Theorem~\ref{thm:Psvertex} does not hold for cubic graphs with
cyclic connectivity $3$.}
\end{remark}

We are now ready for the main result of this section.

\begin{theorem}\label{thm:betterPS}
Every cyclically $4$-edge-connected cubic graph $G$ has a
coherent stable decycling partition. More precisely, if $G$ has
$n$ vertices, then
\begin{enumerate}
\item[{\rm(i)}] for $n\equiv2\pmod4$ the vertex set of $G$
    has a partition $\{A,J\}$ where $A$ induces a tree and
    $J$ is independent;
\item[{\rm(ii)}] for $n\equiv0\pmod4$ the vertex set of $G$
    has a partition $\{A,J\}$ where $A$ induces a tree and
    $J$ is near-independent.
\end{enumerate}
\end{theorem}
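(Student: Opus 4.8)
The plan is to treat the two congruence classes separately. Part (i) is essentially immediate: a cyclically $4$-edge-connected cubic graph is upper-embeddable (cf.\ Corollary~\ref{cor:4cc}), and since $n\equiv 2\pmod 4$ forces the Betti number to be even, $G$ is one-face embeddable. Theorem~\ref{thm:1-face} then delivers a partition $\{A,J\}$ with $A$ inducing a tree and $J$ independent, which is precisely (i).

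For part (ii) I would reduce a graph $G$ of order $n\equiv 0\pmod 4$ to a smaller cyclically $4$-edge-connected graph of order $\equiv 2\pmod 4$ via Wormald's Theorem~\ref{wormald}. First I would dispose of the two exceptional graphs $K_4$ and $Q_3$ by exhibiting coherent partitions directly (for $K_4$ any two disjoint edges give $A$ and $J$ each inducing a single edge). For every other $G$, Theorem~\ref{wormald} presents $G$ as an edge-extension of some cyclically $4$-edge-connected cubic graph $H$: there are new adjacent vertices $x_1,x_2$ with $x_1$ subdividing an edge $e_1=a_1b_1$ of $H$, $x_2$ subdividing a nonadjacent edge $e_2=a_2b_2$, together with the extra edge $x_1x_2$; here $a_1,b_1,a_2,b_2$ are four distinct vertices and $H$ has order $n-2\equiv 2\pmod 4$.

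The key step is to choose a partition of $H$ that is compatible with one of the subdivided edges. Using the Payan--Sakarovitch vertex theorem (Theorem~\ref{thm:Psvertex}) I would take a stable decycling partition $\{A_H,J_H\}$ of $H$ with $a_1\in J_H$. As $H$ has order $\equiv 2\pmod4$, this partition is of type (i): $A_H$ induces a tree $T_H$ and $J_H$ is independent; in particular $b_1\in A_H$, so $e_1$ joins $J_H$ to $A_H$. I would then set $J=J_H\cup\{x_1\}$ and $A=A_H\cup\{x_2\}$. The point of forcing $a_1\in J_H$ is that the only edge of $G[J]$ becomes $x_1a_1$: the remaining neighbours of $x_1$ lie in $A$, and $a_1$, being in the independent set $J_H$, has all its other neighbours in $A_H$; hence $J$ is near-independent. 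For $A$ one checks that $G[A]$ is $T_H$ with $e_2$ either subdivided by $x_2$ (when both ends of $e_2$ lie in $A_H$) or carrying $x_2$ as a pendant vertex (when one end lies in $J_H$). Since at least one end of $e_2$ lies in $A_H$ (the other cannot, as $J_H$ is independent), and since deleting the cross edge $e_1$ destroys no edge of $T_H$, the set $A$ induces a tree.

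The main obstacle is exactly the placement that makes $G[J]$ contain one edge while keeping $G[A]$ connected and acyclic: a naive extension of a partition of $H$ tends either to disconnect the tree (when a tree edge of $T_H$ is subdivided into the decycling set) or to leave $J$ fully independent, which is impossible for $n\equiv0\pmod4$. The device resolving this is Theorem~\ref{thm:Psvertex}, which lets me prescribe an endpoint of a subdivided edge to lie in $J_H$, turning that edge into a cross edge; this simultaneously supplies the unique edge of the near-independent set and prevents the loss of a tree edge. Once $\{A,J\}$ is produced it is coherent of the required form, and one could alternatively phrase the argument through Theorem~\ref{thm:amply2-face}, noting that the constructed tree and its cotree realise a Xuong tree with a single heavy odd component.
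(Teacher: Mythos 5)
Your proposal is correct and follows essentially the same route as the paper's proof: part (i) via upper-embeddability of cyclically $4$-edge-connected graphs together with Theorem~\ref{thm:1-face}, and part (ii) by handling $K_4$ and $Q_3$ directly, applying Wormald's edge-extension theorem (Theorem~\ref{wormald}), and using Theorem~\ref{thm:Psvertex} to force an endpoint of one subdivided edge into the decycling set so that the new edge $x_1a_1$ becomes the unique edge of $J$ while the tree survives via subdivision or a pendant attachment. Your case analysis for the second subdivided edge matches the paper's verbatim in substance, so there is nothing to add.
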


\begin{proof}
From Section~\ref{sec:partitions} we already know that in a
cubic graph $G$ of order $n\equiv2\pmod4$ every stable
decycling partition is coherent. Thus we can assume that $G$ is
a cyclically $4$-edge-connected cubic graph of order
$n\equiv0\pmod4$. If $G$ is either $K_4$ and $Q_3$, then a
coherent decycling partition is easily found: one for $Q_3$ is
shown in Figure~\ref{examples}, and for $K_4$ any partition of
vertices into two $2$-element subsets is good. Now, let $G$ be
different from $K_4$ and $Q_3$. By Theorem~\ref{wormald},  $G$
arises from a cyclically $4$-edge-connected cubic graph $G'$ of
order $n-2$ by an edge extension. Two independent edges $xy$
and $wz$ of $G'$ are thus subdivided by vertices $u$ and $v$,
respectively, and $G$ is then created by adding the edge
$e=uv$. Since $G'$ is cyclically $4$-edge-connected and its
order is $2\pmod4$, it has a coherent decycling partition
$\{A',J'\}$ such that $A'$ induces a tree and $J'$ is
independent. Furthermore, according to
Theorem~\ref{thm:Psvertex}, we can assume that $x\in J'$. Let
$T'$ denote the tree induced by $A'$ in $G'$.

We show that $\{A'\cup\{v\}, J'\cup\{u\}\}$ is a coherent
decycling partition of $G$. Let us look at the distribution of
the vertices $y$, $w$, and $z$ with respect to the partition
$\{A',J'\}$. Since $x$ is in $J'$, $xy$ is an edge of $G'$, and
$J'$ is an independent set, we conclude that $y$ belongs to
$A'$. As regards the edge $wz$, either both its end-vertices
are contained in $A'$, or they belong to different sets of the
partition, say, $w\in J'$ and $z\in A'$. Observe that in either
case the induced subgraph $G[J'\cup\{u\}]$ has a single edge,
namely $xu$, so $J'\cup\{u\}$ is a near-independent set.
Furthermore, the subgraph $T=G[A'\cup\{v\}]$ is a tree: in the
former case it arises from $T'$ by subdividing its edge $wz$
with $v$, and in the latter case $T=T'+vz$.  In both cases
$\{A'\cup\{v\}, J'\cup\{u\}\}$ is a coherent decycling
partition of~$G$, as claimed.
\end{proof}


\section{Coherent partitions of 3-connected cubic graphs}\label{sec:decomp}
\noindent{}In this section we strengthen
Theorem~\ref{thm:betterPS} by showing that coherent decycling
partitions can be guaranteed in rich classes of cubic graphs
whose cyclic connectivity equals $3$. The main idea behind is
the existence of a canonical decomposition of $3$-connected
cubic graphs into cyclically $4$-edge-connected factors.
Although such a decomposition is likely to belong to the area
of mathematical folklore, we need to work out the necessary
details before they can be applied to our main result.

Let $G_1$ and $G_2$ be bridgeless cubic graphs. Pick a vertex $v_1$ in
$G_1$ and a vertex $v_2$ in $G_2$, remove the two vertices,
retain the dangling edges (more precisely, free edge-ends), 
and identify each dangling edge of
$G_1-v_1$ with a dangling edge of $G_2-v_2$ to obtain a cubic
graph $G_1*G_2$. We say that $G_1*G_2$ is a \emph{$3$-sum} of
$G_1$ and $G_2$ with respect to $v_1$ and $v_2$, the two
vertices being the \emph{root vertices} for the $3$-sum.  The
three edges that connect the neigbours of $v_1$ to the
neighbours of $v_2$ form a $3$-edge-cut which we call the
\emph{principal edge cut} of $G_1*G_2$. Observe that if $G_1$
and $G_2$ are $3$-connected, so is $G_1*G_2$.

Conversely, let $G$ be a $3$-connected cubic graph containing a
$3$-edge-cut $S$ whose removal leaves two nontrivial components
$H_1$ and $H_2$; note that $S$ is necessarily cycle-separating,
and vice versa. We can turn each $H_i$ to a cubic graph $\bar
H_i$ by taking a new vertex $x_i$ and attaching the three
dangling edges of $H_i$ to it. Clearly, both $\bar H_1$ and
$\bar H_2$ are $3$-connected. Moreover, $G=\bar H_1 * \bar
H_2$. We  have thus decomposed $G$ into a $3$-sum of two
smaller $3$-connected graphs $\bar H_1$ and $\bar H_2$. If any
of $\bar H_1$ and $\bar H_2$ contains a cycle-separating
$3$-edge-cut, we can repeat the process and obtain a set of
three cubic graphs such that $G$ can be reconstructed from them
by using $3$-sum twice. After a finite number of steps we
produce a collection $\{G_1, G_2, \ldots, G_r\}$ of
$3$-connected cubic graphs such that $G$ can be reconstructed
from them by repeated application of $3$-sum, but none of the
graphs $G_i$ can be expressed as a $3$-sum of two smaller cubic
graphs. In other words, each $G_i$ is cyclically
$4$-edge-connected. The collection $\{G_1, G_2, \ldots, G_r\}$
is called a \emph{decomposition} of $G$ into a $3$-sum of
cyclically $4$-edge-connected graphs, and the graphs $G_i$ are
called the \emph{factors} of the decomposition. As we confirm
in following theorem, the collection $\{G_1, G_2,\ldots, G_r\}$
is determined uniquely up to isomorphism and permutation of
factors. In this sense, it is appropriate to call it a
\emph{canonical decomposition} of $G$.

\begin{theorem}\label{3-cut_decomp}
Every $3$-connected cubic graph $G$ has a decomposition
$\{G_1,\penalty0 G_2,\ldots,G_r\}$ into cyclically
$4$-edge-connected graphs such that $G$ can be reconstructed
from them by repeated $3$-sums. The decomposition is unique up
to isomorphism and order of factors.
\end{theorem}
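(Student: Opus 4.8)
The plan is to dispose of existence quickly and to concentrate on uniqueness, which ultimately rests on a single structural fact: in a $3$-connected cubic graph no two cycle-separating $3$-edge-cuts cross. Existence is essentially contained in the discussion preceding the statement. Repeatedly splitting along cycle-separating $3$-edge-cuts strictly decreases the order of each piece, so the process terminates, and by construction every resulting factor has no cycle-separating $3$-edge-cut, i.e. is cyclically $4$-edge-connected. Here I use the stated equivalence between nontrivial and cycle-separating $3$-edge-cuts, together with the already-noted facts that capping a side with a new vertex keeps the graph $3$-connected and cubic, and that this capping neither creates nor destroys cycle-separating $3$-edge-cuts strictly inside that side. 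The whole weight of the theorem thus lies in showing that the unordered collection of factors is independent of the choices made.

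\emph{Non-crossing lemma.} I would first prove that if $\delta_G(A)$ and $\delta_G(B)$ are cycle-separating $3$-edge-cuts of a $3$-connected cubic graph $G$, then one of the four ``corners'' $A\cap B$, $A\setminus B$, $B\setminus A$, $V\setminus(A\cup B)$ is empty. Since $G$ is $3$-connected cubic it is $3$-edge-connected, so $|\delta_G(X)|\ge 3$ for every nonempty proper $X$. Suppose all four corners are nonempty. The cut function $d(X)=|\delta_G(X)|$ is submodular and satisfies $d(X)=d(V\setminus X)$; applying submodularity to $(A,B)$ and to $(A,V\setminus B)$ yields $d(A)+d(B)\ge d(A\cap B)+d(A\cup B)$ and $d(A)+d(B)\ge d(A\setminus B)+d(B\setminus A)$. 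The left-hand sides equal $6$ while each of the four terms on the right is at least $3$, so both are equalities, every corner-cut has size exactly $3$, and the two submodular slacks — counting the edges between $A\setminus B$ and $B\setminus A$, respectively between $A\cap B$ and $V\setminus(A\cup B)$ — vanish. Feeding these vanishings into the corner equalities $d(\text{corner})=3$ collapses the system to an equation of the form $2q=3$ with $q$ a nonnegative integer, a contradiction. Hence the cycle-separating $3$-edge-cuts of $G$ form a cross-free family.

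\emph{Uniqueness.} With the lemma in hand I would induct on $|V(G)|$. If $G$ is cyclically $4$-edge-connected the decomposition is $\{G\}$ and there is nothing to prove. Otherwise let $\mathcal{C}$ be the family of cycle-separating $3$-edge-cuts, orient each cut towards its smaller side, and fix a cut $\delta_G(A)$ with $A$ inclusion-minimal. Minimality together with cross-freeness forces no cut of $\mathcal{C}$ to have a side properly inside $A$, so capping $G[A]$ produces a cyclically $4$-edge-connected graph $\bar G_A$. I then claim $\bar G_A$ is a factor of \emph{every} complete decomposition $\mathcal{D}$ of $G$. The principal cuts used by $\mathcal{D}$ form a cross-free subfamily of $\mathcal{C}$ whose capped regions (atoms) are exactly its factors; since $\delta_G(A)$ is laminar with all of them, $A$ is either contained in a single atom $R$ or is a union of atoms. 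In the first case, if $A$ were strictly smaller than $R$ then $\delta_G(A)$ would persist as a cycle-separating $3$-edge-cut of the factor $\bar G_R$, contradicting its cyclic $4$-edge-connectivity; in the second case some principal cut would have a side properly inside $A$, contradicting minimality. Hence $A$ is itself an atom and $\bar G_A\in\mathcal{D}$.

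Finally, peeling $A$ amounts to removing a leaf of the atom-tree. Both decompositions contain the common factor $\bar G_A$, and their remaining factors are each a complete decomposition of the strictly smaller $3$-connected cubic graph $G^{*}$ obtained by capping $G[V\setminus A]$. The induction hypothesis applied to $G^{*}$ gives that the two remainders agree up to isomorphism and order, and restoring $\bar G_A$ completes the proof. I expect the main obstacle to be the non-crossing lemma: everything downstream is bookkeeping about laminar families and leaf-peeling, but the entire scheme collapses without the parity contradiction that rules out crossing $3$-cuts, and one must also be careful to verify that capping a side restricts $\mathcal{C}$ correctly to the factors so that the induction on $G^{*}$ is legitimate.
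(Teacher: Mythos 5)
Your proposal is correct, but it reaches uniqueness by a genuinely different route than the paper. The paper's proof is a two-sentence local-confluence argument: it observes that two distinct cycle-separating $3$-edge-cuts $S_1,S_2$ are either disjoint or share exactly one edge (sharing two is impossible, since $S_1\triangle S_2$ would be a nonzero element of the cut space of size $2$ in a $3$-edge-connected graph), and that splitting along them in either order produces the same graphs; combined with termination this yields a unique ``normal form'' in the style of Newman's lemma, with the diamond-checking left to the reader. You instead prove a structural non-crossing lemma and run an induction that peels an inclusion-minimal side $A$, showing $\bar G_A$ appears in every complete decomposition. Your key lemma checks out: with corners $P=A\cap B$, $Q=A\setminus B$, $R=B\setminus A$, $S=V\setminus(A\cup B)$ all nonempty, tightness of both submodular inequalities forces $e(Q,R)=e(P,S)=0$; then $d(P)=3=d(Q)$ gives $e(P,R)=e(Q,S)=q$, so $d(A)=e(P,R)+e(Q,S)=2q=3$, exactly the parity contradiction you announce (and note your argument never uses that the cuts are cycle-separating, so it holds for all $3$-edge-cuts). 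As for what each approach buys: the paper's commutation argument is shorter and stays entirely at the level of the splitting operation, but it leaves both the confluence bookkeeping and the correspondence between cuts of $G$ and cuts of the intermediate capped graphs implicit; your laminar-family argument makes the tree structure of the decomposition explicit, which is what really certifies that the factor set is well defined, at the price of the capping verification you flag yourself (a $3$-cut of a capped factor with the cap vertex on one side corresponds to a $3$-cut of $G$ with a side inside that region, and a nontrivial $3$-cut in a $3$-connected cubic graph is automatically cycle-separating because each side $C$ has odd order $\geq 3$, induces a connected subgraph, and so satisfies $\beta(G[C])\geq(|C|-1)/2\geq 1$). The details you leave unproven are no heavier than those the paper itself omits, so your proof stands as a valid, more structural alternative.
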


\begin{proof}
The fact that every $3$-connected cubic graph can be decomposed
into cyclically $4$-edge-connected factors has been explained
prior to the formulation of the theorem. As regards the
uniqueness, it is not difficult to see that if $S_1$ and $S_2$
are two distinct cycle-separating $3$-edge-cuts, then the
result of the decomposition into three $3$-connected cubic
graphs by using the two cuts does not depend on the order in
which they are taken. Indeed, if $S_1\cap S_2=\emptyset$, the
conclusion is obvious. If $S_1\cap S_2\ne \emptyset$, then the
intersection consists of a single edge, and again both ways in
which the decomposition is performed produce the same set of
$3$-connected cubic graphs up to isomorphism.
\end{proof}

We are now ready for the main result of this section.

\begin{theorem}\label{thm:decomp1odd}
Every $3$-connected cubic graph whose canonical decomposition
into cyclically $4$-edge-connected cubic graphs contains at
most one cyclically odd factor admits a coherent decycling
partition.
\end{theorem}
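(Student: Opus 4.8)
The plan is to induct on the number $r$ of factors in the canonical decomposition $\{G_1,\dots,G_r\}$ provided by Theorem~\ref{3-cut_decomp}. When $r=1$ the graph $G$ is itself cyclically $4$-edge-connected and the statement is precisely Theorem~\ref{thm:betterPS}. For $r\ge 2$ I would split $G$ along a principal $3$-edge-cut as $G=\bar H_1 * \bar H_2$, where $\bar H_1$ is a single factor and $\bar H_2$ is the $3$-sum of the remaining $r-1$ factors, and build a coherent decycling partition of $G$ out of coherent partitions of the two pieces. Since the Betti number is additive modulo $2$ under $3$-sums, the number of cyclically odd factors has the same parity as $\beta(G)$; the hypothesis that at most one factor is odd thus means that $G$ is cyclically even when there are no odd factors and cyclically odd when there is exactly one. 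Crucially, because a decomposition tree on at least two nodes has at least two leaves and at most one factor is odd, I can always choose $\bar H_1$ to be an \emph{even} leaf factor. Then $\bar H_2$ inherits at most one odd factor, so the induction hypothesis applies to it, and exactly one of $\bar H_1,\bar H_2$ is cyclically odd in the one-odd-factor case.

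The core of the argument is the gluing across the principal cut $S=\{e_1,e_2,e_3\}$, where $e_i=a_ib_i$ with $a_i$ a neighbour of the root $x_1$ of $\bar H_1$ and $b_i$ a neighbour of the root $x_2$ of $\bar H_2$. I would aim for the configuration in which $\bar H_1$ carries a coherent partition $\{A_1,J_1\}$ with $x_1\in J_1$ (so, $\bar H_1$ being even, $J_1$ is independent and all of $a_1,a_2,a_3$ lie in $A_1$), while $\bar H_2$ carries a coherent partition $\{A_2,J_2\}$ with $x_2\in A_2$. Setting $A=A_1\cup(A_2\setminus\{x_2\})$ and $J=(J_1\setminus\{x_1\})\cup J_2$, write $T_i=\bar H_i[A_i]$ and $d=\deg_{T_2}(x_2)$. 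Deleting $x_2$ splits $T_2$ into exactly $d$ subtrees, and the $d$ cut edges $e_i$ whose end $b_i$ is a tree-neighbour of $x_2$ rejoin these subtrees to $T_1$; since $|A|$ minus the number of induced edges equals $(|A_1|+|A_2|-1)-(|A_1|+|A_2|-2)=1$ and $G[A]$ is connected, $G[A]$ is a tree. As every $a_i$ lies in $A$, no cut edge falls inside $J$, so $G[J]$ has exactly $e_{J_1}+e_{J_2}$ edges, where $e_{J_i}$ is the number of edges of $J_i$; the one-odd-factor hypothesis makes this total equal to $1$ exactly when $G$ is cyclically odd and $0$ otherwise. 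Hence $\{A,J\}$ is independent or near-independent with the parity matching $n\bmod 4$, that is, coherent (and Theorem~\ref{thm:amply2-face} re-confirms coherence in the odd case).

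What makes the gluing work is the control over where the two roots sit. The root $x_1$ is handled by Theorem~\ref{thm:Psvertex}, which lets me force $x_1\in J_1$; having chosen $\bar H_1$ even guarantees automatically that $x_1$ avoids the (non-existent) edge of the independent set $J_1$. The genuinely delicate point, and the step I expect to be the main obstacle, is placing the root $x_2$ of $\bar H_2$ into $A_2$. This forces me to carry through the induction a strengthened statement, namely that a graph with at most one odd factor admits, for a prescribed vertex, a coherent partition placing that vertex in $A$. In the even case this is cheap: by Theorem~\ref{thm:Psvertex} I put a neighbour of the prescribed vertex into $J$, and independence of $J$ expels the vertex itself into $A$. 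The odd base case is the hard part: I would derive it from Wormald's reduction (Theorem~\ref{wormald}) by expressing an odd cyclically $4$-edge-connected graph as an edge-extension of a smaller even graph and transporting the controlled partition across the extension, disposing of the two irreducible graphs $K_4$ and $Q_3$ by hand. Here $K_4$ is exactly the degenerate case in which every minimum decycling vertex lies on the unique edge of $J$, so the ``vertex in $A$'' formulation is the correct one to propagate (a ``vertex in $J$ off the edge'' formulation would fail there). A final subtlety is that, when the prescribed vertex lies inside $\bar H_2$, the induction must simultaneously steer that vertex and the root $x_2$ into $A_2$; resolving this—either by a two-vertex reinforcement of Theorem~\ref{thm:Psvertex} or by choosing the peeled leaf so as to avoid the prescribed vertex—is the technical heart of the proof.
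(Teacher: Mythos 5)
Your gluing computation across the principal cut is sound and, in fact, coincides with the paper's: your configuration ``$x_1\in J_1$, $x_2\in A_2$'' is exactly Case~1 of the paper's Claim, with the same tree-plus-edge-count argument and the same bookkeeping of edges inside $J$, and your parity analysis of the odd factor is correct. The genuine gap is the step you yourself flag as the ``technical heart'': your induction requires a coherent partition of the composite graph $\bar H_2$ with the root $x_2$ prescribed to lie in $A_2$ (and, when a prescribed vertex sits inside $\bar H_2$, a simultaneous two-vertex version). Neither tool is available. Theorem~\ref{thm:Psvertex} applies only to cyclically $4$-edge-connected graphs, and the paper explicitly remarks, right after Theorem~\ref{wormald}, that Payan and Sakarovitch observed it fails for cubic graphs of cyclic connectivity $3$ --- which is precisely what $\bar H_2$ is whenever $r\ge 3$. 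So your ``cheap'' even case is cheap only at the base of the induction, not along it. Your fallback of peeling a leaf that avoids the prescribed vertex also fails concretely: if the decomposition tree is a path whose two leaves are the odd factor and the factor containing the prescribed vertex, there is no even leaf avoiding that vertex; and peeling the leaf that contains it would require steering one vertex into $J_1$ (the root) and another into $A_1$ simultaneously inside a cyclically $4$-edge-connected graph, i.e.\ the two-prescription strengthening of Theorem~\ref{thm:Psvertex} that you correctly suspect is needed but never prove, and which is not known.

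The paper escapes this entirely by reversing the roles of the two sides of the $3$-sum. It seeds the reconstruction at the unique possibly odd factor $G_1$ and glues the even cyclically $4$-edge-connected factors $K$ on one at a time, taking an \emph{arbitrary} coherent partition $\{A_H,J_H\}$ of the already-built composite part $H$ and performing a case analysis on where the root $x$ of $H$ happens to fall: in $J_H$ incident with the surplus edge, in $J_H$ otherwise, or in $A_H$. All vertex steering is then done inside the fresh factor $K$, where a single application of Theorem~\ref{thm:Psvertex} suffices --- either prescribing a suitable neighbour $y_i$ of the root $y$ into $J_K$, whereupon independence of $J_K$ (valid since $\beta(K)$ is even) forces $y\in A_K$, or prescribing $y\in J_K$ directly. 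Thus no strengthened prescribed-vertex statement is ever carried through the induction on the composite side. Adopting this role reversal closes your gap, and the rest of your argument then goes through essentially verbatim.
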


\begin{proof}
Let $G$ be a $3$-connected cubic graph whose canonical
decomposition \linebreak $\{G_1,\dots,G_r\}$ into cyclically
$4$-edge-connected graphs contains at most one cyclically odd
factor, say $G_1$. We may assume that $r\ge 2$ for otherwise
the result directly follows from Theorem~\ref{thm:betterPS}.
Now, $G$ can be reconstructed from $\{G_1,\dots,G_r\}$ by a
repeated use of a $3$-sum. Furthermore, for cubic graphs $H$
and $K$ one has $\beta(H*K)=\beta(H)+\beta(K)-2$, so $G$ is
cyclically odd if and only if $G_1$ is cyclically odd.
Therefore, if we start the reconstruction of $G$ from $G_1$,
the statement of the theorem will follow immediately from
following the claim.

\medskip\noindent
Claim. \emph{Let $G=H*K$ be a $3$-sum of cubic graphs where $H$
admits a coherent decycling partition and $K$ is cyclically
$4$-edge-connected with $\beta(K)$ even. Then $G$ also admits a
coherent decycling partition.}

\medskip\noindent
Proof of Claim. Let $x$ be the root vertex of $H$, let $x_1$,
$x_2$, and $x_3$ be its neighbours, and let $y$ be the root
vertex of $K$ with neighbours $y_1$, $y_2$, and $y_3$. We
assume that the principal 3-edge-cut of $H*K$ consists of the
edges $x_1y_1$, $x_2y_2$, and $x_3y_3$.

Theorem~\ref{thm:1-face} and the fact that every cyclically
$4$-edge-connected cubic graph is upper-embeddable imply that
$K$ has a coherent decycling partition $\{A_K,J_K\}$ where
$J_K$ is independent. Let $\{A_H, J_H\}$ be an arbitrary
coherent decycling partition for $H$; recall that the induced
subgraph $H[J_H]$ has at most one edge. If such an edge exists,
we call it a \emph{surplus edge}. Let $T_H\subseteq H$ and
$T_K\subseteq K$ denote the trees induced by the sets $A_H$ and
$A_K$, respectively. Note that both trees are non-trivial,
because each of $H$ and $K$ has at least four vertices and
their decycling number is given by the formula $\lceil
n+2/4\rceil$, where $n$ is the number of vertices.

We distinguish two main cases depending on whether
$x\in J_H$ or $x\in A_H$.

\medskip\noindent
Case 1. \textit{The root vertex of $H$ belongs to $J_H$.}

\noindent We
choose the partition $\{A_K,J_K\}$ in such a way that $y\in
A_K$. This is indeed possible, because
Theorem~\ref{thm:Psvertex} tells us that we can pick a
neighbour $y'$ of $y$ and require that $y'\in J_K$; as a
consequence, $y$ lies $A_K$.  Now we can define a vertex
partition $\{A,J\}$ of $G$ by setting $A=A_H\cup (A_K-\{y\})$
and $J=(J_H-\{x\})\cup J_K$. To show that $\{A,J\}$ is a
coherent decycling partition we consider two subcases.

\medskip\noindent
Subcase 1.1. \textit{The root vertex of $H$ is incident with
the surplus edge.}

\noindent Without loss of generality we may assume that the
surplus edge of $H$ is $xx_3$. We now chose the partition
$\{A_K,J_K\}$ in such a way that $y_3\in J_K$. Since $T_K$ is
non-trivial, at least one of the edges incident with $y$
belongs to $T_K$, say $yy_1$, but possibly also $yy_2$. It
follows that $T_K-y$ is either a tree or it consists of two
trees, each containing a neighbour of $y$ different from~$y_3$.
It is easy to see that the induced subgraph $T=G[A]$ is a tree:
depending on the structure of $T_K-y$ either
$T=T_H\cup\{x_1y_1\}\cup (T_K-y)$ or
$T=T_H\cup\{x_1y_1,x_2y_2\}\cup(T_K-y)$. In the former case,
$J=(J_H-x)\cup J(K)$, while in the latter case $J=(J_H-x)\cup
(J(K)\cup\{y_2\})$. By the assumptions, if $e$ is an edge
joining two vertices of $J$, then $e\in\{x_1y_1, x_2y_2,
x_3y_3\}$. Clearly, $x_3y_3$ is an edge of $G[J]$. In the
former case, $x_1y_1\in E(T)$ and $x_2y_2$ joins a vertex of
$T$ to a vertex of $J$. In the latter case both $x_1y_1$ and
$x_2y_2$ belong to $T$. In either case,  $G[J]$ has precisely
one edge, namely $x_3y_3$. Hence, $\{A,J\}$ is a coherent
decycling partition of $G$.

\medskip\noindent
Subcase 1.2. \textit{The root vertex of $H$ is not incident
with the surplus edge.}

\noindent As before, we choose a coherent
decycling partition $\{A_K,J_K\}$ where $y\in A_K$, but now we
are not restricted to choose a suitable neighbour $y'$ of $y$
which should be in $J_K$. For simplicity we again choose
$y_3\in J_K$. We may also assume that either $xx_1$ only, or
both $xx_1$ and $xx_2$, belong to $T_K$. As in the previous
subcase, the induced subgraph $G[A]$ is a tree (with identical
description as above), and $G[J]$ has at most one edge. If such
an edge exists, then it coincides with the surplus edge of $H$.
We have thus proved that $\{A,J\}$ is a coherent decycling
partition of $G$.

\medskip\noindent
Case 2. \textit{The root vertex of $H$ belongs to $A_H$.}

\noindent In
this case we choose the coherent partition $\{A_K,J_K\}$ in
such a way that $y\in J_K$; the existence of such a partition
is guaranteed by Theorem~\ref{thm:betterPS}. Let $Y$ be the set
of all edges of $G$ having the form $x_iy_i$ where $x_ix$
belongs to $T_H$. Since $T_H$ is non-trivial, $1\le |Y|\le 3$.
Let us define a vertex partition $\{A,J\}$ of $G$ by setting
$A=(A_H-\{x\})\cup A_K$ and $J=J_H\cup(J_K-\{y\})$. Observe
that $T_H-x$ is a forest whose number of components equals
$|Y|$. It follows that $T=G[A]=(T_H-x)\cup Y\cup T_K$ is a
tree. Moreover, $G[J]$ has at most one edge, which, if it
exists, coincides with the surplus edge of $H$. Hence,
$\{A,J\}$ is a coherent decycling partition of $G$.

The proof is complete.
\end{proof}

In the remainder of this section we show that
Theorem~\ref{thm:decomp1odd} applies to the family of cubic
graphs defined by a  weaker form of cyclic $4$-connectivity
known as odd cyclic $4$-connectivity. A graph $G$ is said to be
\emph{odd-cyclically $k$-connected}, for an integer $k\ge 2$,
if every induced subgraph $H\subseteq G$ such that
$\delta_G(H)$ is cycle-separating and $\beta(H)$ is odd has
$|\delta_G(H)|\geq k$.  The concept of odd cyclic
$4$-connectivity was independently introduced by Khomenko and
Glukhov \cite{KG} and Nebesk\'y \cite{nebesky83} in 1980 and
1983, respectively, for the study of the maximum genus of a
graph.

Every cyclically $k$-edge-connected cubic graph is clearly
odd-cyclically $k$-connected, but the converse is false.
Figure~\ref{oddc4c_notc4c} depicts an example of a cubic graph
which is odd-cyclically $4$-connected while its standard cyclic
connectivity equals only $3$.

 \begin{figure}
    \centering
    \resizebox{0.3\textwidth}{!}{
    \begin{tikzpicture}
\tikzmath{\p=1; \q=0;\x=1; \w=1;\z=1.5;}
\coordinate (a1) at (\p,\q);
\coordinate (a2) at (\p,\q+\x);
\coordinate (b1) at (\p+\w,\q-0.4);
\coordinate (b2) at (\p+\w,\q-0.4+\x);
\coordinate (b3) at (\p+\w,\q-0.4+\x+\x);

\coordinate (c1) at (\p+\w+\z,\q-0.4);
\coordinate (c2) at (\p+\w+\z,\q-0.4+\x);
\coordinate (c3) at (\p+\w+\z,\q-0.4+\x+\x);
\coordinate (d1) at (\p+\w+\w+\z,\q);
\coordinate (d2) at (\p+\w+\w+\z,\q+\x);

\draw[-, line width=0.3mm] (a1)--(b1);
\draw[-, line width=0.3mm] (a1)--(b2);
\draw[-, line width=0.3mm] (a1)--(b3);
\draw[-, line width=0.3mm] (a2)--(b1);
\draw[-, line width=0.3mm] (a2)--(b2);
\draw[-, line width=0.3mm] (a2)--(b3);

\draw[-, line width=0.3mm] (c1)--(d1);
\draw[-, line width=0.3mm] (c1)--(d2);
\draw[-, line width=0.3mm] (c2)--(d1);
\draw[-, line width=0.3mm] (c2)--(d2);
\draw[-, line width=0.3mm] (c3)--(d1);
\draw[-, line width=0.3mm] (c3)--(d2);

\draw[-, line width=0.3mm] (b1)--(c1);
\draw[-, line width=0.3mm] (b2)--(c2);
\draw[-, line width=0.3mm] (b3)--(c3);

\filldraw  (a1) circle (2pt);
\filldraw  (a2) circle (2pt);
\filldraw  (b1) circle (2pt);
\filldraw  (b2) circle (2pt);
\filldraw  (b3) circle (2pt);

\filldraw  (c1) circle (2pt);
\filldraw  (c2) circle (2pt);
\filldraw  (c3) circle (2pt);
\filldraw  (d1) circle (2pt);
\filldraw  (d2) circle (2pt);

\end{tikzpicture}}
    \caption{An odd-cyclically $4$-connected cubic graph which
    is not cyclically $4$-edge-connected.}
    \label{oddc4c_notc4c}
    \end{figure}
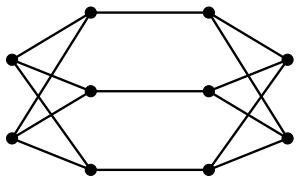

\begin{theorem}\label{thm:PS_odd_c4c}
Let $\{G_1, G_2,\ldots, G_r\}$ be the canonical decomposition
of an odd-cyclically $4$-connected cubic graph with cyclic
connectivity $3$. Then each $G_i$ has an even Betti number, and
so has $G$. In particular, $G$ has a coherent decycling
partition $\{A,J\}$ where $A$ induces a tree and $J$ is an
independent set.
\end{theorem}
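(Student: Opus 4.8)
The plan is to prove the three assertions in the order stated, with the bulk of the work going into the first one (every factor has even Betti number); the remaining two then follow quickly. Throughout I will use the parity identity $\beta(H*K)=\beta(H)+\beta(K)-2$ recorded in the proof of Theorem~\ref{thm:decomp1odd}, together with the elementary count showing that if $H=G[A]$ is one side of a $3$-edge-cut $\delta_G(A)$ in a cubic graph and $\bar H$ is the cubic graph obtained by capping the three dangling edges with a new vertex, then $\beta(\bar H)=\beta(H)+2$. In particular $\beta(\bar H)$ and $\beta(H)$ always share their parity.

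First I would record the basic parity consequence of odd-cyclic $4$-connectivity: if $\delta_G(A)$ is a cycle-separating $3$-edge-cut, then since $|\delta_G(A)|=3<4$, the defining property forces both $\beta(G[A])$ and $\beta(G[V-A])$ to be even. Since $G$ has cyclic connectivity $3$, it is not cyclically $4$-edge-connected and so carries such a cut $S$; writing $G=\bar H_1*\bar H_2$, the preceding remark shows that each $\beta(G[A_i])$ is even, whence each $\beta(\bar H_i)=\beta(G[A_i])+2$ is even as well.

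The hard part will be to show that each $\bar H_i$ inherits odd-cyclic $4$-connectivity from $G$, so that an induction can proceed. Here I would take an arbitrary cycle-separating $3$-edge-cut $S'$ of $\bar H_1$ which, after possibly swapping the two sides, separates off a part $P$ not containing the root vertex $x_1$, and argue that $S'$ corresponds to a genuine cycle-separating $3$-edge-cut $\delta_G(P)$ of $G$. The subtlety is that $S'$ may use some of the edges incident with $x_1$: each such edge $x_1a_i$ forces the attachment vertex $a_i$ into $P$ and is replaced, on passing to $G$, by the corresponding principal-cut edge $a_ib_i$, while the genuine edges of $S'$ inside $A_1$ are unchanged; a short bookkeeping shows that exactly three edges leave $P$ in $G$. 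Since $P$ carries a cycle and the complementary side contains all of $H_2$ (which has a cycle because $\beta(H_2)=\beta(\bar H_2)-2\ge 2$), the cut $\delta_G(P)$ is cycle-separating, so $\beta_G(P)$ is even by odd-cyclic $4$-connectivity of $G$; as the induced subgraph on $P$ is the same in $\bar H_1$ as in $G$, the $P$-side of $S'$ is even, and the identity $\beta(\bar H_1)=\beta(P)+\beta(\bar H_1-P)+2$ then forces the other side to be even too. Thus no cycle-separating $3$-edge-cut of $\bar H_1$ has an odd side, and since $\bar H_1$ is $3$-connected it has no smaller cycle-separating cut; hence $\bar H_1$, and symmetrically $\bar H_2$, is odd-cyclically $4$-connected.

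With inheritance in hand I would finish by strong induction on $|V(G)|$ applied to the statement ``every odd-cyclically $4$-connected cubic graph that is not cyclically $4$-edge-connected has all its canonical factors of even Betti number.'' For each $\bar H_i$ (which has fewer vertices than $G$): if it is cyclically $4$-edge-connected it is itself a factor and is already known to be even, while otherwise the induction hypothesis applies. By the uniqueness in Theorem~\ref{3-cut_decomp} the factors of $G$ are exactly those of $\bar H_1$ together with those of $\bar H_2$, so all factors of $G$ are even; note that no exceptional base case such as $K_4$ can intrude, since a factor of odd Betti number has just been excluded. The parity identity then gives that $\beta(G)=\beta(\bar H_1)+\beta(\bar H_2)-2$ is even. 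Finally, as every factor is cyclically even, $G$ has at most one cyclically odd factor and Theorem~\ref{thm:decomp1odd} supplies a coherent decycling partition $\{A,J\}$; because $\beta(G)$ is even, $J$ must be independent, for otherwise $J$ would be near-independent and Theorem~\ref{thm:2-face} would make $G$ two-face embeddable and hence of odd Betti number, a contradiction. This delivers the desired partition with $A$ inducing a tree and $J$ independent.
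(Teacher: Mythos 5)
Your proposal is correct and takes essentially the same route as the paper: you show that both factors of any $3$-sum inherit odd-cyclic $4$-connectivity and an even Betti number by transferring cycle-separating $3$-edge-cuts across the root vertex (the paper phrases this contrapositively, pushing a hypothetical odd-sided cut $Q$ of a factor back into $G$), and then conclude via the canonical decomposition and Theorem~\ref{thm:decomp1odd}, with the independence of $J$ forced by parity exactly as the paper intends. One cosmetic slip: $\beta(H_2)=\beta(\bar H_2)-2\ge 1$ rather than $\ge 2$ (take $\bar H_2=K_4$), but $\beta(H_2)\ge 1$ already gives the cycle your argument needs.
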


\begin{proof}
It is sufficient to prove that if $H$ is a $3$-connected
odd-cyclically $4$-connected cubic graph which can be expressed
as a $3$-sum $K*L$ of cubic graphs $K$ and $L$, then both $K$
and $L$ are odd-cyclically $4$-connected and have an even Betti
number. By symmetry, it suffices to prove it for, say,~$K$.

Since $H=K*L$, there exists a vertex $u$ in $K$ such that $K-u$
is an induced subgraph of $H$ with $|\delta_H(K-u)|=3$. By the
definition of odd-cyclic $4$-connectivity, $\beta(K-u)$ is
even. It follows that $\beta(K)$ is even, similarly $\beta(L)$
is even, and consequently $\beta(H)=\beta(K)+\beta(L)-2$ is
also even.

Next we prove that $K$ is odd-cyclically $4$-connected. Suppose
not. Then $K$ contains an induced subgraph $Q$ such that
$\beta(Q)$ is odd and $|\delta_K(Q)|\le 3$. Since $K$ is
$3$-connected, we infer that $|\delta_K(Q)|=3$. Let $Q'$ be the
other component of the graph $K-\delta_K(Q)$, that is,
$Q'=K-V(Q)$. Further, let $u$ be the root vertex of $K$ used
for the $3$-sum producing~$H$. We can clearly assume that $u$
belongs to $Q'$. Note that $\beta(Q')$ is odd, too. Now, at
most one edge of $\delta_K(Q)$ is incident with $u$, because
$\delta_K(Q)$ is a minimum cycle-separating edge cut. However,
irrespectively of whether $\delta_K(Q)$ does or does not
contain such an edge, it is easy to see that $Q$ is an induced
subgraph of $H$ with $\beta(Q)$ odd and $|\delta_H(Q)|=3$,
contradicting the assumption that $H$ is odd-cyclically
$4$-connected. This completes the proof.
\end{proof}


\section{Cubic graphs with no coherent decycling partitions}\label{sec:c2c}

\noindent{}The aim of this section is to present an infinite
family of 2-connected cubic graphs which have a stable
decycling partition but no coherent one. By
Theorem~\ref{thm:amply2-face}, this amounts to displaying an
infinite family $\mathcal{F}$ of $2$-connected tightly
upper-embeddable graphs. For convenience, the topological
language will be in the foreground throughout.

The family $\mathcal{F}$ will be built from two graphs
displayed in Figure~\ref{fig:c2c_counter}. An important fact
about them is that they are \emph{claw-free}, which means that
they do not contain an induced subgraph isomorphic to the
complete bipartite graph $K_{1,3}$. In fact, the entire family
$\mathcal{F}$ consists of claw-free graphs.

Simple $2$-connected claw-free cubic graphs were characterised
by Palmer et al. \cite{Palmer} (see \cite[Proposition~1]{Oum} in 2002). Their
characterisation requires two operations. The first of them is
the well-known \emph{inflation} of a vertex to a triangle. It
replaces a vertex $v$ with a triangle $W_v$ and attaches the
edges formerly incident with $v$ to distinct vertices of $W_v$.
The second operation is a \emph{diamond insertion}, where a
\emph{diamond} means a graph isomorphic to $K_4$ minus an edge.
This operation is performed by replacing an edge $e=uv$ with a
path $P_e=uu'v'v$ of length $3$ and substituting the inner edge
$u'v'$ of $P_e$  with a diamond $D_{e}$ in such a way that its
$2$-valent vertices are identified with $u'$ and $v'$,
respectively. More specifically, this is an \emph{insertion of
a diamond} into $e$, see Figure~\ref{fig:insertion}

\begin{figure}[h]
\centering
\resizebox{0.7\textwidth}{!}{
\begin{tikzpicture}
[every node/.style={inner sep=0,outer sep=0}, line cap=rect, scale=0.7]  

  \node(a0) {};
  \path (a0)++ (0,0) node (a1){};
  \path (a0)++ (1,0) node (a2) {};
  \path (a0)++ (-1,0) node (a5) {};
  \path (a0)++ (-2,0) node (a6) {};  
  \path (a0)++ (90: 0.7) node(a3){};
  \path (a0)++ (270: 0.7) node(a4){};
  
  \path (a6)++ (90+45:1) node (b6){};
  \path (a6)++ (180+45:1) node (b7){};  

  \path (a1)++ (90-45:1) node (e6){};
  \path (a1)++ (-45:1) node (e7){};

  \filldraw (a6) circle (3pt)node[above, yshift=5] {$u$};
  \filldraw (a1) circle (3pt)node[above, yshift=5] {$v$};

  \draw [-, line width=0.4mm](a1) -- (a6)node[midway,above,yshift=5]{$e$}; 
  \draw [-, line width=0.4mm](a1) -- (e6);   
  \draw [-, line width=0.4mm](a1) -- (e7);   
  \draw [-, line width=0.4mm](a6) -- (b6);   
  \draw [-, line width=0.4mm](a6) -- (b7);      

  \path(a0) ++(3.8,0) node (b0) {};
  \path(b0)++ (-1.74,0) node (a8){};
  \path(b0)++ (0.75,0) node (a9){};  
    \draw[-{Latex}] (a8) -- (a9)node[midway, above, yshift=2]{};


  \path(a0) ++(8.6,0) node (c0) {};
  \path (c0)++ (2.5,0) node (a1){};
  \path (c0)++ (1,0) node (a2) {};
  \path (c0)++ (-1,0) node (a5) {};
  \path (c0)++ (-2.5,0) node (a6) {};  
  \path (c0)++ (90: 0.7) node(a3){};
  \path (c0)++ (270: 0.7) node(a4){};
  
  \path (a6)++ (90+45:1) node (b6){};
  \path (a6)++ (180+45:1) node (b7){};  

  \path (a1)++ (90-45:1) node (e6){};
  \path (a1)++ (-45:1) node (e7){};

  \filldraw (a2) circle (3pt)node[above, yshift=5] {$v'$};
  \filldraw (a3) circle (3pt)node[above, yshift=5] {$s$};
  \filldraw (a4) circle (3pt)node[below, yshift=-5] {$t$};
  \filldraw (a5) circle (3pt)node[above, yshift=5] {$u'$};
  \filldraw (a6) circle (3pt)node[above, yshift=5] {$u$};
  \filldraw (a1) circle (3pt)node[above, yshift=5] {$v$};

  \draw [-, line width=0.4mm](a1) -- (a2);  
  \draw [-, line width=0.4mm](a2) -- (a3);  
  \draw [-, line width=0.4mm](a2) -- (a4);   
  \draw [-, line width=0.4mm](a3) -- (a4);  
  \draw [-, line width=0.4mm](a4) -- (a5);  
  \draw [-, line width=0.4mm](a5) -- (a3); 
  \draw [-, line width=0.4mm](a5) -- (a6);   
  \draw [-, line width=0.4mm](a1) -- (e6);   
  \draw [-, line width=0.4mm](a1) -- (e7);   
  \draw [-, line width=0.4mm](a6) -- (b6);   
  \draw [-, line width=0.4mm](a6) -- (b7);      
  
\end{tikzpicture}}
\caption{Insertion of a diamond into an edge.}
    \label{fig:insertion}
\end{figure}
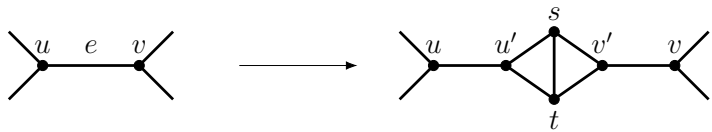

We can repeat the operation of inserting a diamond into the
edge $e=uv$ by inserting a diamond into the edge $v'v$, and so
on, which results in replacing $e$ with a \emph{string of
diamonds}. To be more explicit, we say that $e$ is replaced
with a \emph{string of $k$ diamonds}, where $k\ge 0$, if $e$ is
replaced with an alternating sequence $uu_1, D_1, v_1u_2, D_2,
\ldots, v_{k-1}u_k, D_k, v_kv$ of edges and diamonds such that
the $2$-valent vertices of the diamond $D_i$ are $u_i$ and
$v_i$ for each $i\in\{1,2,\ldots,k\}$. The number $k$ is the
\emph{length} of the string. The string collapses to the edge
$e=uv$ if $k=0$.

To finish the series of definitions needed to characterise
simple $2$-connected claw-free cubic graphs we define a
\emph{ring of diamonds} to be a graph obtained from an even
cycle by substituting every second edge with a diamond.

The characterisation of simple $2$-connected claw-free cubic graphs
reads as follows.

\begin{proposition}\label{prop:claw-free}
{\rm (Palmer \cite{Palmer})} Let $G$ be a simple $2$-connected
cubic graph. Then $G$ is claw-free cubic graph if and only if
one of the following holds:
\begin{itemize}
\item[{\rm (i)}] $G$ is isomorphic to $K_4$,

\item[{\rm (ii)}] $G$ is a ring of diamonds,

\item[{\rm (iii)}] $G$ is obtained from a
    $2$-edge-connected cubic graph $H$ by inflating each
    vertex of $H$ to a triangle and by replacing certain
    edges of $H$ with strings of diamonds.
\end{itemize}
\end{proposition}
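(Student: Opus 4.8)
The plan is to establish both implications through the single unifying observation that, in a cubic graph, being claw-free is equivalent to every vertex lying in a triangle. Indeed, a vertex $v$ of a cubic graph has exactly three neighbours, so the only possible induced claw centred at $v$ uses $v$ together with all of $N(v)$, and this is induced precisely when the three neighbours are pairwise non-adjacent; hence $G$ is claw-free if and only if for every vertex the three neighbours span at least one edge, that is, $v$ lies in a triangle. The sufficiency direction (that (i), (ii), (iii) yield claw-free graphs) then reduces to checking that in each construction every vertex lies in a triangle: in $K_4$ this is immediate; in a ring of diamonds every vertex is either a diagonal vertex of a diamond (lying in two triangles) or an attachment vertex (lying in the single triangle formed with the two diagonal vertices); and in the inflation construction every vertex belongs either to an inflation triangle $W_v$ or to one of the inserted diamonds, and in both cases lies in a triangle. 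This part is routine.

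For the necessity direction I would first dispose of $K_4$: if $G$ contains $K_4$ as a subgraph, then all four of its vertices already have degree $3$, so connectivity forces $G=K_4$, giving (i). Henceforth assume $G\ne K_4$, so $G$ is $K_4$-free. Counting the triangles through a vertex $v$ as the edges spanned by $N(v)$, claw-freeness together with $K_4$-freeness shows that each vertex lies in exactly one or two triangles. I would then show that a vertex lying in two triangles is a diagonal vertex of a unique diamond: if $N(v)=\{x,y,z\}$ spans the two edges $xy,yz$, then $\{v,x,y,z\}$ induces $K_4$ minus the edge $xz$, a diamond with diagonal $vy$; the other diagonal $y$ also lies in two triangles, while the two attachment vertices $x,z$ each lie in exactly one triangle and send their third edge outside the diamond along a non-triangle edge (here $K_4$-freeness is used to rule out $x$ and $z$ being adjacent). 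Consequently every triangle has either no two-triangle vertex (a pure \emph{inflation triangle}, whose three vertices each send one non-triangle edge outward) or exactly two (a \emph{diamond triangle}), and every vertex lies in a unique \emph{block}, namely a pure triangle or a diamond.

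The decisive step is to assemble the blocks globally. I would form the \emph{block graph} whose nodes are the blocks and whose links are the non-triangle edges of $G$; each pure triangle contributes three stubs and each diamond contributes exactly two, so the block graph is a connected multigraph all of whose degrees lie in $\{2,3\}$, and, using $K_4$-freeness and the simplicity of $G$, it has no loops. If every block is a diamond, all nodes have degree $2$, the block graph is a single cycle, and $G$ is exactly a ring of diamonds, giving (ii). Otherwise at least one block is a pure triangle; suppressing the degree-$2$ nodes (the diamonds) turns each maximal chain of consecutive diamonds into a string of diamonds and yields a cubic graph $H$ whose vertices are the inflation triangles, so that $G$ is recovered from $H$ by inflating each vertex to a triangle and replacing the appropriate edges with strings of diamonds, giving (iii). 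Finally I would verify that $H$ is $2$-edge-connected: a bridge of $H$ would correspond either to a single non-triangle edge or to a connecting edge inside a string of diamonds, and in either case removing it would disconnect $G$, contradicting that the $2$-connected graph $G$ is bridgeless.

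I expect the main obstacle to be the careful bookkeeping of this final assembly step rather than any single hard idea. One must check that the blocks genuinely partition $V(G)$ (each vertex lies in exactly one block), that distinct blocks meet only along non-triangle edges and never share a vertex, and that the block graph has no loops or spurious low-degree nodes; all of this rests on the local analysis of the diamond, where $K_4$-freeness repeatedly prevents attachment vertices from coalescing or from lying on a triangle edge. A secondary subtlety is to confirm that the ring of diamonds (case (ii)) cannot be absorbed into case (iii): it is precisely the configuration with no inflation triangle at all, whereas the construction in (iii) always produces at least one.
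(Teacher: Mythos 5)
Your proposal is correct, but there is nothing in the paper to compare it against: the paper does not prove this proposition at all, quoting it from Palmer et al.\ \cite{Palmer} (see also \cite[Proposition~1]{Oum}). Judged on its own, your argument is sound and is essentially the classical proof of Palmer's characterisation. The opening reduction --- in a simple cubic graph, claw-freeness is equivalent to every vertex lying on a triangle --- is exactly right, and the local analysis works as you describe: once $K_4$ is disposed of, each vertex lies in one or two triangles; a two-triangle vertex together with its neighbourhood induces a uniquely determined diamond whose $2$-valent vertices lie in exactly one triangle and emit non-triangle edges; and every triangle carries zero or exactly two two-triangle vertices, so the pure triangles and diamonds partition $V(G)$ (two triangles sharing a vertex necessarily share an edge, which is what makes the blocks well defined). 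Two of the bookkeeping points you flag deserve to be made explicit in a write-up. First, in the $2$-regular case the block multigraph can be a cycle of length two (two diamonds joined by a pair of independent edges --- the ring of diamonds on a $4$-cycle), while a single diamond cannot close on itself, since an edge between its two $2$-valent vertices would complete a $K_4$; so ``single cycle'' must be read in the multigraph sense. Second, when suppressing diamond chains to form $H$, a chain returning to the same inflation triangle would create a loop of $H$; your bridge argument does dispose of this, because the third stub of that triangle would then either be a bridge of $G$ or have nowhere to attach, contradicting $2$-connectedness, but the loop case is worth naming separately since looplessness is needed before ``$2$-edge-connected cubic graph'' makes sense. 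Finally, note that $H$ must be allowed to have parallel edges (the paper's conventions permit this): the triangular prism arises from the dipole $D_2$, and the necklace $L_4$ used in the paper's own construction of the family $\mathcal{F}$ is a multigraph, so the suppression step should be stated as producing a cubic multigraph.
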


We continue by defining the family $\mathcal{F}$.

\medskip\noindent
\textbf{Construction.} Take a 2-connected cubic graph $K$ on
four vertices; there are two such graphs -- the complete graph
$K_4$ on four vertices and the \textit{necklace} $L_4$ obtained
from a $4$-cycle by replacing every second edge with a pair of
parallel edges. In $K$, inflate each vertex to a triangle;
these triangles are called \emph{vertex-triangles}. In the
resulting graph $K'$ replace each edge not lying on a
vertex-triangle with a  string of diamonds with positive
length; different edges of $K'$ may accommodate different
numbers of diamonds. Include each graph obtained in this way in
$\mathcal{F}$. The smallest graphs in $\mathcal{F}$ are the
graphs $F_1$ and $F_2$ from Figure~\ref{fig:c2c_counter}, which
are obtained by vertex inflation and diamond insertion from
$K_4$ or to $L_4$, respectively.

\medskip

Our aim is to prove that all graphs in $\mathcal{F}$ are
tightly two-face embeddable. In the course of the proof it will
be convenient to say that a Xuong tree of a two-face-embeddable
cubic graph is \emph{light} if the unique odd component of the
corresponding cotree is light, that is, it is formed by a
single edge. It follows from Theorem~\ref{thm:amply2-face} that
every Xuong tree of a tightly two-face-embeddable cubic graph
is light. A Xuong tree which is not light will be called
\emph{heavy}.

We need the following lemma.

\begin{lemma}\label{lm:light}
If a cubic graph $G$ contains a light Xuong tree and $G'$
arises from $G$ by a diamond insertion, then $G'$ also contains
a light Xuong tree.
\end{lemma}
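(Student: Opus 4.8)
The plan is to start from a light Xuong tree $T$ of $G$ and to surger it across the inserted diamond, producing a spanning tree $T'$ of $G'$ whose cotree again has exactly one odd component, that component being a single edge. Write $e=uv$ for the edge receiving the diamond and let $u',s,t,v'$ be the four new vertices, with $u',v'$ the $2$-valent vertices of the diamond (adjacent to $u$ and $v$, respectively) and $s,t$ its $3$-valent vertices; thus $e$ is replaced by the seven edges $uu',u's,u't,st,sv',tv',v'v$. Since a diamond insertion raises the Betti number by $2$, parity is preserved, so it suffices to keep exactly one odd cotree component and to keep it of size one. Recall that each component of $G-E(T)$ is a path or a cycle, and that the unique odd component $B$ is a single edge. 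The construction of $T'$ splits according to whether $e$ is a tree edge or a cotree edge of $T$.

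If $e\in E(T)$, I would take the edges $uu',u's,sv',v'v$ (a path from $u$ to $v$ through the gadget) together with $st$ into $T'$. This reconnects the two components of $T-e$ without creating a cycle and spans the four new vertices, while the two remaining gadget edges $u't,tv'$ share the vertex $t$ and form a single even cotree component. Every old cotree component, in particular $B$, survives untouched, so $T'$ is again a light Xuong tree.

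The substance lies in the case $e\notin E(T)$, where $e$ sits inside some cotree component and deleting it disturbs that component. If $e$ is itself the odd edge $B$, I would put $uu',u's,sv',tv'$ into $T'$; the three leftover gadget edges then fall into the even path formed by $u't,st$ (sharing $t$) together with the isolated edge $v'v$, so the removed odd component $B$ is simply replaced by the new single edge $v'v$, and all other components stay even. If instead $e$ lies in an even component $C$, then deleting $e$ yields a unique cotree piece of \emph{odd} length: the whole of $C-e$ when $C$ is a cycle or when $e$ is a pendant edge, and the odd one of the two subpaths when $e$ is interior to a path. I would extend exactly this odd piece by a length-$3$ detour through the gadget, routing its three new cotree edges as the path $uu',u's,st$ and attaching the four new vertices to $T$ from the opposite terminal via the tree edges $u't,sv',tv',v'v$; this turns the odd piece into an even path while leaving every other component, including $B$, even. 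The mirror routing anchored at $v$ settles the symmetric subcases.

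The step I expect to be the main obstacle is precisely this parity bookkeeping for a cotree edge $e$. One is tempted to reconnect the two halves of $C-e$ by a cotree path through the diamond, but any path from $u$ to $v$ inside the gadget uses at least four edges, whereas only three new cotree edges are available; the resolution is to lengthen a single odd piece by a pendant detour of (odd) length $3$ rather than to bridge the two halves. Alongside this, I must check in each case that the chosen tree edges are acyclic and span $u',s,t,v'$; the one genuine trap is the triangle $u's,u't,st$ of the diamond, and every routing above is arranged so that at least one of these three edges is left in the cotree, which forbids the triangle and makes the remaining verification routine.
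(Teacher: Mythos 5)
Your proof is correct and follows essentially the same route as the paper's: the same three-way case split on the position of $e$ relative to the light Xuong tree $T$ (tree edge, edge of an even cotree component, or the odd single edge itself), with the same tree surgeries in each case, up to the $u$--$v$ mirror symmetry. The only cosmetic difference is in the even-component case, where the paper invokes Kotzig's pairing (Lemma~\ref{lm:kotzig}) to locate the end of $e$ at which to attach the length-$3$ cotree detour, whereas you identify the unique odd piece of $C-e$ by a direct path/cycle parity analysis; both arguments attach the detour to the odd side and produce the same light Xuong tree of $G'$.
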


\begin{proof}
Assume that $G'$ arises from $G$ by inserting a diamond $D$
into an edge $e=uv$ of $G$. Let $u'$ be the $2$-valent vertex
of $D$ adjacent to $u$ in $G'$ and similarly let $v'$ be the
$2$-valent vertex of $D$ adjacent to $v$. Let $s$ and $t$
denote the two $3$-valent vertices of $D$.

Take an an arbitrary light Xuong tree $T$ of $G$. We show that
$T$ can be modified to a light Xuong tree of $G'$. We
distinguish three cases depending on the position of $e$ with
respect to $T$.

\medskip\noindent
Case 1. \textit{The edge $e$ lies in $T$.} In this case we
extend $T-e$ with the edges  $uu'$, $u's$, $st$, $sv'$, and
$v'v$ to obtain a spanning tree $T'$ of $G'$. It is easy to see
that $T'$ is light.

\medskip\noindent
Case 2. \textit{The edge $e$ lies in an even cotree component
with respect to $T$.} Recall that every even cotree component
with respect to $T$ can be partitioned into pairs of adjacent
edges. Let $f$ be the edge forming a pair with $e$, and let $Q$
be the component of $G-E(T)$ containing $e$ and $f$. Without loss
of generality we may assume that $v$ is the common endvertex of
both $e$ and $f$.  Now we extend $T$ with the edges $uu'$,
$u's$, $st$, and $sv'$ to obtain a spanning tree $T'$ of
$G'$. Observe that $Q$ is now transformed to an even component
of $G''$ containing the path $u'tv'v$ and the edge $f$, while
the odd component of $G-E(T)$ remains in $G'-E(T')$ intact.
Hence, $T'$ is a light Xuong tree of~$G'$.

\medskip\noindent
Case 3. \textit{The edge $e$ coincides with the one that forms
the odd component of $G-E(T)$.}  Now we form $T'$ by adding to
$T$ the path $uu'sv't$ of $G'$. As a consequence, the path
$u'ts$ forms an even component of $G'-E(T')$ and the edge $v'v$
constitutes the only odd component of $G'-E(T')$. Again, $T'$
is a light Xuong tree of $G'$.
\end{proof}

We proceed to the main result of this section. It characterises
all tightly two-face embeddable graphs within the class of
simple $2$-connected claw-free cubic graphs.

\begin{theorem}\label{thm:tight-family}
The following statements are equivalent for every simple
$2$-connected claw-free cubic graph $G$.
\begin{itemize}
\item[{\rm (i)}] $G$ is tightly two-face-embeddable.

\item[{\rm (ii)}] $G\in\mathcal{F}$.
\end{itemize}
\end{theorem}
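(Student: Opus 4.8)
The plan is to prove both implications, treating $\mathcal{F}$ as generated from its two smallest members $F_1$ and $F_2$ by repeated diamond insertions, and exploiting Lemma~\ref{lm:light} together with the characterisation of tightness furnished by Theorem~\ref{thm:amply2-face}. Recall that, by Theorem~\ref{thm:amply2-face}, a two-face-embeddable cubic graph is tightly two-face-embeddable precisely when it has no Xuong tree whose odd cotree component is heavy; equivalently, when it possesses no coherent decycling partition. Thus the whole theorem becomes a statement about the possible cotree structures of spanning trees of $G$.

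For the implication (ii)$\Rightarrow$(i) I would first settle the easy invariants. A vertex inflation changes the Betti number by $+1$, while a diamond insertion adds four vertices and changes the Betti number by $+2$; hence every graph obtained from $K_4$ or $L_4$ by the Construction has order $\equiv 0\pmod 4$ and odd Betti number. Two-face-embeddability then follows by induction on the number of diamonds: the base graphs $F_1,F_2$ carry a light Xuong tree (a direct check), and Lemma~\ref{lm:light} shows that a light Xuong tree survives every diamond insertion. The real content is \emph{tightness}. Here I would prove a converse to Lemma~\ref{lm:light}: \emph{if $G'$ is obtained from $G$ by a diamond insertion and $G'$ has a heavy Xuong tree, then so does $G$}. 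Granting this, tightness of all of $\mathcal{F}$ reduces by induction to the tightness of $F_1$ and $F_2$, which is verified directly. To prove the converse lemma one analyses, exactly as in the three cases of Lemma~\ref{lm:light}, how the five internal edges of the inserted diamond and its two attaching edges are distributed between a heavy Xuong tree $T'$ of $G'$ and its cotree; in each case one contracts the diamond back to the edge $e=uv$, pairs off the diamond's contribution into even cotree components, and checks that the heavy odd component of $G'-E(T')$ descends to a heavy odd component of the resulting cotree of $G$.

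For the implication (i)$\Rightarrow$(ii) I would invoke Palmer's classification (Proposition~\ref{prop:claw-free}). A tightly two-face-embeddable graph is in particular two-face-embeddable, hence of order $\equiv0\pmod4$. The genuine work is to eliminate the two exceptional shapes and to pin down the base of the generic one. The graph $K_4$ is ruled out because the cotree of a Hamiltonian-path spanning tree of $K_4$ is a single path of length three, i.e.\ a heavy Xuong tree, so $K_4$ is amply two-face-embeddable; a ring of diamonds is ruled out by the same idea, exhibiting a spanning tree whose cotree splits into one path of length three and one of length two, which one constructs by using, in two consecutive diamonds, the path-type cotree of Case~2 of Lemma~\ref{lm:light} on one of them and a shifted choice on the other. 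Thus $G$ must be of type~(iii): it arises from a $2$-edge-connected cubic graph $H$ by inflating every vertex to a triangle and replacing certain edges by strings of diamonds. It then remains to show that tightness forces $H$ to have exactly four vertices (hence $H\in\{K_4,L_4\}$) and forces every edge of $H$ to carry a string of positive length; each failure is converted into an explicit heavy Xuong tree, whereupon $G\in\mathcal{F}$.

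I expect the decisive obstacle to be the tightness half of the argument, i.e.\ the assertion that these very rigid graphs admit \emph{no} heavy Xuong tree. Unlike two-face-embeddability, which only requires producing one good spanning tree, tightness is a statement about \emph{all} spanning trees and is therefore global. The converse reduction lemma for diamond insertions is the technical heart on the (ii)$\Rightarrow$(i) side, and the matching difficulty on the (i)$\Rightarrow$(ii) side is to build, whenever the base $H$ is too large or an edge of $H$ is left without a diamond, a single spanning tree whose cotree has exactly one odd component which is heavy, without inadvertently creating a second odd component. The local bookkeeping around vertex-triangles and diamonds---each vertex-triangle contributing one cotree edge and each diamond absorbing a pair of cotree edges into an even component---is what I expect to make both directions go through.
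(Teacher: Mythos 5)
Your overall architecture (Palmer's classification for (i)$\Rightarrow$(ii), reduction to $F_1,F_2$ for (ii)$\Rightarrow$(i)) matches the paper's, but the technical heart of your (ii)$\Rightarrow$(i) argument --- the ``converse'' to Lemma~\ref{lm:light}, proved by contracting the inserted diamond and checking that the heavy odd component descends --- has a genuine gap, and it is exactly the gap that forces the paper onto a different route. The problem is that the unique heavy odd cotree component may have exactly three edges and lie essentially inside the inserted diamond, in which case contraction destroys heaviness. Concretely, let $D$ have $2$-valent vertices $u',v'$ and $3$-valent vertices $s,t$, with attaching edges $uu'\in T'$ and $v'v$ in the cotree, and $T'\cap D=\{u's,u't,tv'\}$: then the cotree path $v\,v'\,s\,t$ (edges $v'v$, $v's$, $st$) can be the unique odd component, heavy in $G'$; contracting $D$ back to $e=uv$ collapses it to the single edge $e$, so you obtain only a \emph{light} Xuong tree of $G$ and no contradiction with the tightness of $G$. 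The same collapse occurs for a diamond of the paper's Type~3 (cotree path of length $3$ entirely inside $D$, both attaching edges in the tree), where parity bookkeeping after contraction again produces a single light odd component rather than a heavy one. Local elementary switches do not rescue the descent: they reshape the heavy component but keep it glued to the diamond with only three edges. This is precisely why the paper does \emph{not} induct on diamond insertions for tightness; instead it normalises every diamond to Type~1, 2, or~3 and every vertex-triangle to Type~1 or~2 by switches (its Claim), and then derives a \emph{global} contradiction by counting: for the invariant $\lambda(K)=n_1(K)-n_3(K)$ one has $n_1-n_3=2$ for any spanning tree, while summing $\lambda$ over all diamonds and the four vertex-triangles, together with the observation that a Type~1 vertex-triangle pairs with a Type~1 diamond into a five-edge odd cotree component (so $w_1\le 1$, $w_2\ge 3$), yields $2\ge w_2\ge 3$. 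Note also that your converse lemma, restricted to the family $\mathcal{F}$, is essentially equivalent to the theorem being proved, so assuming it without an independent proof would be close to circular; as a general statement about arbitrary diamond insertions its truth is not at all clear.

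There is a second, more repairable, misstep in your (i)$\Rightarrow$(ii) plan: you propose to force $|V(H)|=4$ by converting ``each failure into an explicit heavy Xuong tree''. That cannot work for the size failure. If the base graph $H$ has more than four vertices, then $G$ is not merely amply embeddable --- it fails to be upper-embeddable at all, so it admits \emph{no} Xuong tree with a single odd cotree component, heavy or light, and there is nothing to exhibit. The paper handles this with Nebesk\'y's criterion (Theorem~\ref{thm:Nebesky}): taking $X$ to be the set of edges of $G$ not lying on a triangle, each vertex-triangle is a cyclically odd component and each diamond a cyclically even one, giving $\text{ec}(G-X)+2\,\text{oc}(G-X)-2=2n-3n/2-2$ and hence $n\le 4$. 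Your heavy-tree strategy is correct for the other failure mode (an edge of $H$ carrying no diamond), where the paper indeed exhibits heavy Xuong trees in the three small cases and propagates them by contracting diamonds; but the $|V(H)|\le 4$ step needs a Nebesk\'y-type counting argument, not a construction.
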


\begin{proof}
(i) $\Rightarrow$ (ii): Assume that $G$ is a simple
$2$-connected claw-free cubic graph which is tightly
two-face-embeddable. Then one of the cases (i), (ii) or (iii)
of  Proposition~\ref{prop:claw-free} occurs. In cases (i) and
(ii) it is easy to find a heavy Xuong tree, so $G$ belongs to
the family of graphs characterised by (iii) of
Proposition~\ref{prop:claw-free}. It follows that $G$ arises
from a $2$-connected cubic graph $H$ by inflating every vertex
of $H$ to a triangle and by replacing certain edges of $H$ with
strings of diamonds.

At first we show that $H$ has four vertices. Let $n$ denote the
order of $H$. We apply Nebesk\'y's characterisation of
upper-embeddable graphs (Theorem~\ref{thm:Nebesky}) to $G$. Let
$X$ be the set of all edges of $G$ not lying on a triangle.
With this choice, each cyclically odd component of $G-X$ is a
vertex-triangle and each cyclically even component is a
diamond. Clearly, $\text{oc}(G-X)=n$. To calculate the number
of cyclically even components we count the contribution of each
edge of $H$ to both $\text{ec}(G-X)$ and $|X|$. Assume that an
edge $e$ of $H$ has been replaced with a string of $k$
diamonds. Since $k+1$ edges of the string belong to $X$, the
edge $e$ contributes $-1$ to the difference
$\text{ec}(G-X)-|X|$. There are $3n/2$ edges in $H$, so
$\text{ec}(G-X)-|X|=-3n/2$. Equation \eqref{eq:Nebesky} implies
that
$$2\ge 2\text{oc}(G-X)+\text{ec}(G-X)-|X|=2n-3n/2,$$
hence $n\le 4$. Recall that $G$ is two-face-embeddable, which
means that $|V(G)|\equiv 0\pmod4$. It follows that
$n=|V(H)|\equiv 0\pmod4$ as well, and therefore $n=4$. Thus $H=K_4$ or
$H=L_4$.

\begin{figure}[h]
\centering
\resizebox{0.95\textwidth}{!}{
\input{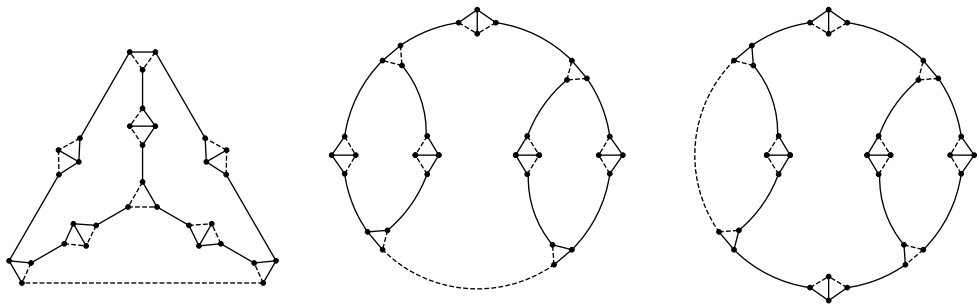}}
\caption{Heavy Xuong trees in three small claw-free graphs.}
    \label{fig:3-heavy}
\end{figure}

To finish the proof of the implication (i) $\Rightarrow$ (ii)
we need show that to obtain $G$ each edge of $H$ must be
replaced with a string of diamonds which has positive length.
Suppose this is not the case, and there are certain edges in
$H$ which are inherited to $G$ without any diamond insertion.
First, let us examine the case where there is only one such
edge $e_0$ in $H$. Up to isomorphism, there are two
possibilities to choose $e_0$ in $L_4$ and one in $K_4$. In all
three cases $G$ admits a heavy Xuong tree $T$ with the odd cotree
component containing $e_0$; see Figure~\ref{fig:3-heavy}. Hence,
by Theorem~\ref{thm:amply2-face},
$G$ is amply upper-embeddable, contrary to the
assumption. To proceed, observe that each diamond $D\subseteq
G$ supports an even component $Q$ of $G-E(T)$ such that
$Q\subseteq D$ and both edges of $\delta_G(D)$ are contained
in~$T$. If we contract any number of diamonds in $G$ and
suppress the resulting $2$-valent vertices, $T$ will transform
to a heavy Xuong tree of the resulting cubic graph, with the
original heavy cotree component containing $e_0$ being
preserved. It follows that $G$ is amply upper-embeddable unless
each edge of $H$ has been replaced with a string of diamonds
of positive length. Summing up, we have proved that if $G$ is a
simple $2$-connected claw-free cubic graph which is tightly
two-face-embeddable, then $G\in\mathcal{F}$.

\medskip\noindent
(ii) $\Rightarrow$ (i): Assume that $G\in\mathcal{F}$. We first
show that $G$ admits a light Xuong tree. As
Figure~\ref{fig:c2c_counter} indicates, this is the case for
both $F_1$ and $F_2$. Since $G$ arises from a graph
$F\in\{F_1,F_2\}$ by iterated diamond insertion, a light
spanning tree in $G$ is guaranteed by Lemma~\ref{lm:light}.

To prove that $G$ is tightly two-face embeddable we need to
show that every Xuong tree of $G$ is light. We proceed by
contradiction and suppose that $G$ contains a heavy Xuong tree
$T$. By Lemma~\ref{lm:amply}, we can assume that the cotree of
$T$ is acyclic. Let $C$ denote the corresponding cotree. We now
examine all possible ways of how $T$ and $C$ can intersect a
diamond or a vertex-triangle.

Firstly, we analyse the diamonds. It is clear that for each
diamond $D$ the intersection of $T$ with $\delta_G(D)$ consists
of either one or two edges. Accordingly, we distinguish several types of
diamonds three of which are of particular interest.

We say that a diamond $D$ is
\begin{itemize}
\item \emph{Type $1$}, if exactly one edge of $\delta_G(D)$
    belongs to $T$ and $C\cap D$ forms a path of length $2$
    connecting the two $2$-valent vertices of $D$;

\item \emph{Type $2$}, if both edges of $\delta_G(D)$
    belong to $T$ and $C\cap D$ forms a path connecting the
    two $2$-valent vertices of $D$; and

\item \emph{Type $3$}, if both edges of $\delta_G(D)$
    belong to $T$ and $C\cap D$ forms a path of length $3$
    connecting a $3$-valent vertex of $D$ to a $2$-valent
    vertex of $D$.
\end{itemize}

Next we discuss the vertex-triangles. It may be useful to
realise that $T$ contains at least one edge of each
vertex-triangle $W$ and at least one edge of each
$\delta_G(W)$. Two types of vertex-triangles are particularly
important.

We say that a vertex-triangle $W$ is
\begin{itemize}
\item \emph{Type $1$}, if $C\cap W$ is a path of length $2$
    and exactly one edge of $\delta_G(W)$ belongs to~$C$; and

\item \emph{Type $2$}, if $C\cap W$ is a path of length $2$
    and all three edges of $\delta_G(W)$ belong to $T$.
\end{itemize}
Note that in Type 1 the cotree edge of $\delta_G(W)$ must be
incident with the initial or terminal vertex of the path $C\cap
W$, otherwise $T$ fails to be a spanning subgraph.

The following claim explains the importance of the five types
of diamonds and vertex-triangles specified above.

\medskip\noindent
Claim. \emph{In $G$, there exists a heavy Xuong tree such that
each diamond is of Type~$1$, $2$, or~$3$, and each
vertex-triangle is of Type~$1$ or~$2$.}

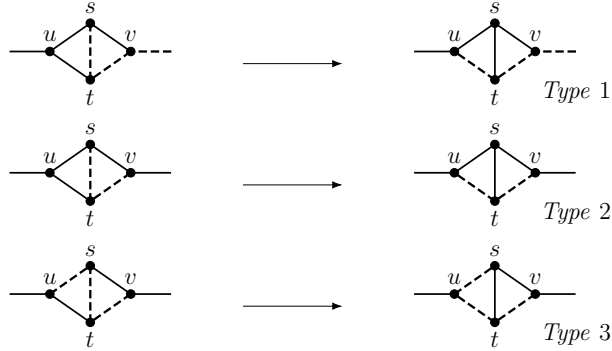
\begin{figure}[htbp]
\centering
\resizebox{0.6\textwidth}{!}{
\begin{tikzpicture}
[every node/.style={inner sep=0,outer sep=0}, line cap=rect, scale=0.7]  

  \node(a0) {};
  \path (a0)++ (2,0) node (a1){};
  \path (a0)++ (1,0) node (a2) {};
  \path (a0)++ (-1,0) node (a5) {};
  \path (a0)++ (-2,0) node (a6) {};  
  \path (a0)++ (90: 0.7) node(a3){};
  \path (a0)++ (270: 0.7) node(a4){};

  \filldraw (a2) circle (3pt)node[above, yshift=5] {$v$};
  \filldraw (a3) circle (3pt)node[above, yshift=5] {$s$};
  \filldraw (a4) circle (3pt)node[below, yshift=-5] {$t$};
  \filldraw (a5) circle (3pt)node[above, yshift=5] {$u$};

  \draw [dashed, line width=0.4mm](a1) -- (a2);  
  \draw [-, line width=0.3mm](a2) -- (a3);  
  \draw [dashed, line width=0.4mm](a2) -- (a4);   
  \draw [dashed, line width=0.4mm](a3) -- (a4);  
  \draw [-, line width=0.3mm](a4) -- (a5);  
  \draw [-, line width=0.3mm](a5) -- (a3); 
  \draw [-, line width=0.3mm](a5) -- (a6);  

  \path(a0) ++(5.5,0) node (b0) {};
  \path(b0)++ (-1.74,-0.3) node (a8){};
  \path(b0)++ (0.75,-0.3) node (a9){};  
    \draw[-{Latex}] (a8) -- (a9)node[midway, above, yshift=2]{};
    
  \path(a0) ++(5.5,-3) node (b0) {};
  \path(b0)++ (-1.74,-0.3) node (a8){};
  \path(b0)++ (0.75,-0.3) node (a9){};  
    \draw[-{Latex}] (a8) -- (a9)node[midway, above, yshift=2]{};  
    
  \path(a0) ++(5.5,-6) node (b0) {};
  \path(b0)++ (-1.74,-0.3) node (a8){};
  \path(b0)++ (0.75,-0.3) node (a9){};  
    \draw[-{Latex}] (a8) -- (a9)node[midway, above, yshift=2]{};      
 

  \path(a0) ++(10,0) node (c0) {};
  \path (c0)++ (2,0) node (a1){};
  \path (c0)++ (1,0) node (a2) {};
  \path (c0)++ (-1,0) node (a5) {};
  \path (c0)++ (-2,0) node (a6) {};  
  \path (c0)++ (90: 0.7) node(a3){};
  \path (c0)++ (270: 0.7) node(a4){};

  \path(c0)++ (2,-1) node (a8){\emph{Type} $1$};

  \filldraw (a2) circle (3pt)node[above, yshift=5] {$v$};
  \filldraw (a3) circle (3pt)node[above, yshift=5] {$s$};
  \filldraw (a4) circle (3pt)node[below, yshift=-5] {$t$};
  \filldraw (a5) circle (3pt)node[above, yshift=5] {$u$};

  \draw [dashed, line width=0.4mm](a1) -- (a2);  
  \draw [-, line width=0.3mm](a2) -- (a3);  
  \draw [dashed, line width=0.4mm](a2) -- (a4);   
  \draw [-, line width=0.3mm](a3) -- (a4);  
  \draw [dashed, line width=0.4mm](a4) -- (a5);  
  \draw [-, line width=0.3mm](a5) -- (a3); 
  \draw [-, line width=0.3mm](a5) -- (a6);


  \path(a0) ++(0,-3) node (d0) {};
  \path (d0)++ (2,0) node (a1){};
  \path (d0)++ (1,0) node (a2) {};
  \path (d0)++ (-1,0) node (a5) {};
  \path (d0)++ (-2,0) node (a6) {};  
  \path (d0)++ (90: 0.7) node(a3){};
  \path (d0)++ (270: 0.7) node(a4){};

  \filldraw (a2) circle (3pt)node[above, yshift=5] {$v$};
  \filldraw (a3) circle (3pt)node[above, yshift=5] {$s$};
  \filldraw (a4) circle (3pt)node[below, yshift=-5] {$t$};
  \filldraw (a5) circle (3pt)node[above, yshift=5] {$u$};

  \draw [-, line width=0.3mm](a1) -- (a2);  
  \draw [-, line width=0.3mm](a2) -- (a3);  
  \draw [dashed, line width=0.4mm](a2) -- (a4);   
  \draw [dashed, line width=0.4mm](a3) -- (a4);  
  \draw [-, line width=0.3mm](a4) -- (a5);  
  \draw [-, line width=0.3mm](a5) -- (a3); 
  \draw [-, line width=0.3mm](a5) -- (a6);  



  \path(c0) ++(0,-3) node (c0) {};
  \path (c0)++ (2,0) node (a1){};
  \path (c0)++ (1,0) node (a2) {};
  \path (c0)++ (-1,0) node (a5) {};
  \path (c0)++ (-2,0) node (a6) {};  
  \path (c0)++ (90: 0.7) node(a3){};
  \path (c0)++ (270: 0.7) node(a4){};

  \path(c0)++ (2,-1) node (a8){\emph{Type} $2$};

  \filldraw (a2) circle (3pt)node[above, yshift=5] {$v$};
  \filldraw (a3) circle (3pt)node[above, yshift=5] {$s$};
  \filldraw (a4) circle (3pt)node[below, yshift=-5] {$t$};
  \filldraw (a5) circle (3pt)node[above, yshift=5] {$u$};

  \draw [-, line width=0.3mm](a1) -- (a2);  
  \draw [-, line width=0.3mm](a2) -- (a3);  
  \draw [dashed, line width=0.4mm](a2) -- (a4);   
  \draw [-, line width=0.3mm](a3) -- (a4);  
  \draw [dashed, line width=0.4mm](a4) -- (a5);  
  \draw [-, line width=0.3mm](a5) -- (a3); 
  \draw [-, line width=0.3mm](a5) -- (a6);  


  \path(d0) ++(0,-3) node (e0) {};
  \path (e0)++ (2,0) node (a1){};
  \path (e0)++ (1,0) node (a2) {};
  \path (e0)++ (-1,0) node (a5) {};
  \path (e0)++ (-2,0) node (a6) {};  
  \path (e0)++ (90: 0.7) node(a3){};
  \path (e0)++ (270: 0.7) node(a4){};

  \filldraw (a2) circle (3pt)node[above, yshift=5] {$v$};
  \filldraw (a3) circle (3pt)node[above, yshift=5] {$s$};
  \filldraw (a4) circle (3pt)node[below, yshift=-5] {$t$};
  \filldraw (a5) circle (3pt)node[above, yshift=5] {$u$};

  \draw [-, line width=0.3mm](a1) -- (a2);  
  \draw [-, line width=0.3mm](a2) -- (a3);  
  \draw [dashed, line width=0.4mm](a2) -- (a4);   
  \draw [dashed, line width=0.4mm](a3) -- (a4);  
  \draw [-, line width=0.3mm](a4) -- (a5);  
  \draw [dashed, line width=0.4mm](a5) -- (a3); 
  \draw [-, line width=0.3mm](a5) -- (a6);


  \path(c0) ++(0,-3) node (f0) {};
  \path (f0)++ (2,0) node (a1){};
  \path (f0)++ (1,0) node (a2) {};
  \path (f0)++ (-1,0) node (a5) {};
  \path (f0)++ (-2,0) node (a6) {};  
  \path (f0)++ (90: 0.7) node(a3){};
  \path (f0)++ (270: 0.7) node(a4){};
  
  \path(f0)++ (2,-1) node (a8){\emph{Type} $3$};

  \filldraw (a2) circle (3pt)node[above, yshift=5] {$v$};
  \filldraw (a3) circle (3pt)node[above, yshift=5] {$s$};
  \filldraw (a4) circle (3pt)node[below, yshift=-5] {$t$};
  \filldraw (a5) circle (3pt)node[above, yshift=5] {$u$};

  \draw [-, line width=0.3mm](a1) -- (a2);  
  \draw [-, line width=0.3mm](a2) -- (a3);  
  \draw [dashed, line width=0.4mm](a2) -- (a4);   
  \draw [-, line width=0.3mm](a3) -- (a4);  
  \draw [dashed, line width=0.4mm](a4) -- (a5);  
  \draw [dashed, line width=0.4mm](a5) -- (a3); 
  \draw [-, line width=0.3mm](a5) -- (a6);  

\end{tikzpicture}}
\caption{Transformation of diamonds to Type~1, 2, and 3.}
    \label{fig:diamonds}
\end{figure}

\medskip\noindent
Proof of Claim. We start with an arbitrary heavy Xuong tree
$T\subseteq G$ whose cotree $C$ is acyclic. Consider an
arbitrary diamond $D$ of~$G$; let $u$ and $v$ be the $2$-valent
vertices and let $s$ and $t$ be the $3$-valent vertices of $D$.
If the edge $st$ belongs to $T$, then $D$ it is easily seen to
be one of Type~1, 2, or~3. To see it, it is sufficient to
realise that $C$ is acyclic and its unique odd component is
heavy. If $st$ belongs to $C$ then, up to isomorphism, there
are three possibilities for the distribution of edges of $D$
into $T$ and $C$; they are illustrated in
Figure~\ref{fig:diamonds} on the left. In each of these three
cases one can find an edge $x$ of $D$ different from $st$ such
that the elementary switch $T'=T+st-x$ gives rise to a Xuong
tree whose cotree is again acyclic and has exactly one heavy
odd component. According to the notation introduced in
Figure~\ref{fig:diamonds}, it is sufficient to take $x=tv$ in
all three cases. Moreover, with respect to $T'$ the diamond $D$
turns to be one of Types~1, 2, or 3. By repeating this
procedure wherever necessary we eventually obtain a Xuong tree
with all diamonds of Type~1, 2, or 3.

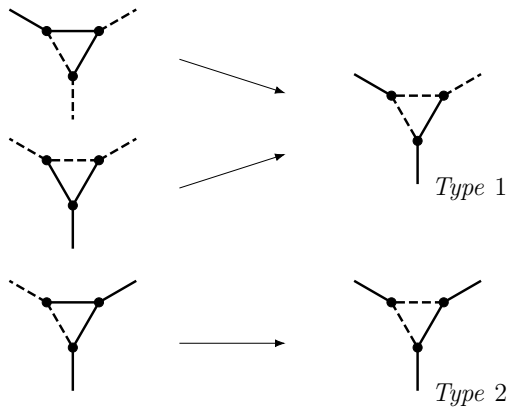
\begin{figure}[htbp]
\centering
\resizebox{0.5\textwidth}{!}{
 \begin{tikzpicture}
[every node/.style={inner sep=0,outer sep=0}, line cap=rect, scale=0.7]  

  \node(a0) {};

  \path(a0) ++(0,0.3) node (f0) {};
  \path (f0)++ (30:0.7) node (a1){};
  \path (f0)++ (30:1.7) node (a2) {};
  \path (f0)++ (150: 0.7) node(a3){};
  \path (f0)++ (150: 1.7) node(a4){};
  \path (f0)++ (270:0.7) node (a5) {};
  \path (f0)++ (270:1.7) node (a6) {};  

  \filldraw (a1) circle (3pt);  
  \filldraw (a3) circle (3pt);
  \filldraw (a5) circle (3pt);

  \draw [dashed, line width=0.4mm](a1) -- (a2);
  \draw [-, line width=0.4mm](a1) -- (a3);
  \draw [-, line width=0.4mm](a3) -- (a4);
  \draw [dashed, line width=0.4mm](a3) -- (a5);    
  \draw [dashed, line width=0.4mm](a5) -- (a6);
  \draw [-, line width=0.4mm](a5) -- (a1);

 
   \path(f0) ++(0,-3) node (q0) {};
  \path (q0)++ (30:0.7) node (a1){};
  \path (q0)++ (30:1.7) node (a2) {};
  \path (q0)++ (150: 0.7) node(a3){};
  \path (q0)++ (150: 1.7) node(a4){};
  \path (q0)++ (270:0.7) node (a5) {};
  \path (q0)++ (270:1.7) node (a6) {};  

  \path(q0)++ (0.94,-2) node (a8){};

  \filldraw (a1) circle (3pt);  
  \filldraw (a3) circle (3pt);
  \filldraw (a5) circle (3pt);

  \draw [dashed, line width=0.4mm](a1) -- (a2);
  \draw [dashed, line width=0.4mm](a1) -- (a3); 
  \draw [dashed, line width=0.4mm](a3) -- (a4);  
  \draw [-, line width=0.4mm](a3) -- (a5);  
  \draw [-, line width=0.4mm](a5) -- (a6);
  \draw [-, line width=0.4mm](a5) -- (a1);  
 
 
   \path(f0) ++(8,-1.5) node (f0) {};
  \path (f0)++ (30:0.7) node (a1){};
  \path (f0)++ (30:1.7) node (a2) {};
  \path (f0)++ (150: 0.7) node(a3){};
  \path (f0)++ (150: 1.7) node(a4){};
  \path (f0)++ (270:0.7) node (a5) {};
  \path (f0)++ (270:1.7) node (a6) {};  

  \path(f0)++ (1.2,-1.8) node (a8){\emph{Type} $1$};

  \filldraw (a1) circle (3pt);  
  \filldraw (a3) circle (3pt);
  \filldraw (a5) circle (3pt);

  \draw [dashed, line width=0.4mm](a1) -- (a2);  
  \draw [dashed, line width=0.4mm](a1) -- (a3); 
  \draw [-, line width=0.4mm](a3) -- (a4);
  \draw [dashed, line width=0.4mm](a3) -- (a5);  
  \draw [-, line width=0.4mm](a5) -- (a6);  
  \draw [-, line width=0.4mm](a5) -- (a1);


  \path(a0) ++(0,-6) node (f0) {};
  \path (f0)++ (30:0.7) node (a1){};
  \path (f0)++ (30:1.7) node (a2) {};
  \path (f0)++ (150: 0.7) node(a3){};
  \path (f0)++ (150: 1.7) node(a4){};
  \path (f0)++ (270:0.7) node (a5) {};
  \path (f0)++ (270:1.7) node (a6) {};  

  \filldraw (a1) circle (3pt);  
  \filldraw (a3) circle (3pt);
  \filldraw (a5) circle (3pt);

  \draw [-, line width=0.4mm](a1) -- (a2);  
  \draw [-, line width=0.4mm](a1) -- (a3);
  \draw [dashed, line width=0.4mm](a3) -- (a4);  
  \draw [dashed, line width=0.4mm](a3) -- (a5);  
  \draw [-, line width=0.4mm](a5) -- (a6);  
  \draw [-, line width=0.4mm](a5) -- (a1);

 
   \path(f0) ++(8,0) node (f0) {};
  \path (f0)++ (30:0.7) node (a1){};
  \path (f0)++ (30:1.7) node (a2) {};
  \path (f0)++ (150: 0.7) node(a3){};
  \path (f0)++ (150: 1.7) node(a4){};
  \path (f0)++ (270:0.7) node (a5) {};
  \path (f0)++ (270:1.7) node (a6) {};

  \filldraw (a1) circle (3pt);  
  \filldraw (a3) circle (3pt);
  \filldraw (a5) circle (3pt);

   \path(f0)++ (1.2,-1.8) node (a8){\emph{Type} $2$};
  
  \draw [-, line width=0.4mm](a1) -- (a2);
  \draw [dashed, line width=0.4mm](a1) -- (a3); 
  \draw [-, line width=0.4mm](a3) -- (a4);
  \draw [dashed, line width=0.4mm](a3) -- (a5);  
  \draw [-, line width=0.4mm](a5) -- (a6);  
  \draw [-, line width=0.4mm](a5) -- (a1);  
  

  \path(a0) ++(4.2,0.3) node (b0) {};
  \path(b0)++ (-1.74,-0.3) node (a8){};
  \path(a8)++ (2.5,-0.8) node (a9){};  
    \draw[-{Latex}] (a8) -- (a9){};
    
  \path(a0) ++(4.2,-3.3) node (b0) {};
  \path(b0)++ (-1.74,0.3) node (a8){};
  \path(a8)++ (2.5,0.8) node (a9){};   
    \draw[-{Latex}] (a8) -- (a9){};
    
  \path(a0) ++(4.2,-6.3) node (b0) {};
  \path(b0)++ (-1.74,-0.3) node (a8){};
  \path(a8)++ (2.5,0) node (a9){};  
    \draw[-{Latex}] (a8) -- (a9){};

\end{tikzpicture}}
\caption{Transformation of vertex-triangles to Type~1 and 2.}
    \label{fig:triangles}
\end{figure}

We now take care of vertex-triangles. By the previous part of
the proof we may assume that $T$ already has the property that
each diamond is of Type~1, 2, or 3. Consider an arbitrary
vertex-triangle $W$ of $G$. At least one edge of $W$ belongs to
$T$ and at least one edge of $\delta_G(W)$ belongs to $T$. If
exactly one edge of $W$ belongs to~$T$, then, up to
isomorphism, there are two possibilities for the distribution
of edges of $\delta_G(W)$ to $T$ and~$C$, which depend on the
size of $\delta_G(W) \cap E(T)$: if $|\delta_G(W)\cap E(T)|=2$,
then $W$ is Type~1, and if $|\delta_G(W)\cap E(T)|=3$, then $W$
is Type~2. The situation that $|\delta_G(W)\cap E(T)|=1$ does
not occur, as $T$ would not be connected. Next assume that
exactly two edges of $W$ belong to $T$. This leads to three
possible distributions, two with $|\delta_G(W)\cap E(T)|=1$ and
one with $|\delta_G(W)\cap E(T)|=2$, see
Figure~\ref{fig:triangles} on the left. The situation that
$|\delta_G(W)\cap E(T)|=3$ does not occur, because it would
create a light odd cotree component of $C$.

Observe that if $W$ is neither Type~1 nor Type~2, then at least
one edge of $\delta_G(W)$ lies in~$C$. If $z$ is such an edge,
then $z$ joins $W$ to a diamond of Type~1, implying that $C$
lies in a cotree component with at least three edges.
With this in mind, it is easy to perform a suitable elementary
switch of the form $T'=T+x-y$ in such a way that the resulting
tree $T'$ is again a heavy Xuong tree and, moreover, $W$ turns
to a vertex-triangle of Type~1 or Type~2. The transformations
are indicated in Figure~\ref{fig:triangles}. After performing
these modifications as many times as necessary we produce a
Xuong tree, still denoted by $T$, where each diamond is Type~1,
2, or~3, and each vertex-triangle is Type~1 or~2. The unique
odd component of the corresponding cotree remains heavy by all
theese modifications. This proves the claim.

\medskip

We are ready to derive a contradiction. For $i\in\{1,2,3\}$,
let $n_i$ denote the number of $i$-valent vertices of~$T$.  A
straightforward inductive argument implies that $n_1=n_3+2$.
Since the diamonds and the vertex-triangles partition the
vertex set of $G$, we can express $n_1$ and $n_3$ as sums of
values ranging through the set $\mathcal{D}$ of all diamonds
and the set $\mathcal{W}$ of vertex-triangles of $G$. Let $d_i$
denote the number of diamonds of Type~$i$, and let $w_i$ denote
the number of vertex-triangles of Type~$i$. Clearly,
$w_1+w_2=4$.

Note that every vertex-triangle $W$ of Type~1 is matched to a
unique diamond $D_W$ of Type~1 via a cotree edge $e_W$, and
together they form the subgraph $W\cup\{e_W\}\cup D_W$. There
exist $w_1$ such subgraphs in $G$, and since each of them
encloses a cotree component comprising five edges, we conclude
that $w_1\le 1$ and $w_2\ge 3$.

Given an induced subgraph $K\subseteq G$, set
$\lambda(K)=n_1(K)-n_3(K)$, where $n_i(K)$ denotes the number
of $i$-valent vertices of $T$ contained in $K$. In particular,
for a diamond $D$ we have $\lambda(D)=2-1=1$ if $D$ is Type~1;
$\lambda(D)=1-1=0$ if $D$ is Type~2; and $\lambda(D)=2-0=2$ if
$D$ is Type~3. Similarly, for a vertex-triangle $W$ we have
$\lambda(W)=2-0=2$ if $W$ is Type~1, and $\lambda(W)=1-0=1$ if
$W$ is Type~2. Putting this information together we obtain:
\begin{align*}
2=n_1-n_3&=\lambda(G)=
\sum_{D\in\mathcal{D}}\lambda(D)+\sum_{W\in\mathcal{W}}\lambda(W)\\
		 &=1d_1+0d_2+2d_3+2w_1+1w_2\ge w_2\ge 3,
\end{align*}
which is a contradiction. Our initial assumption that
$G\in\mathcal{F}$ admits a heavy Xuong tree was therefore
false; it follows that $G$ is tightly upper-embeddable. The
proof is complete.
\end{proof}

The collection of known tightly two-face-embeddable graphs can
be significantly enlarged by applying a specific form of a
well-known operation, known as the $2$-sum of cubic graphs.
Before defining this operation it is convenient to recall that
a cubic graph $G$ is tightly two-face-embeddable if and only if
every Xuong tree of $G$ is light, that is, the unique odd
component of the corresponding cotree is formed by a single
edge (Theorem~\ref{thm:amply2-face}). An edge $e$ of a tightly
two-face-embeddable cubic graph will be called \emph{odd} if
there exists a Xuong tree $T$ such that $e$ consitutes the
unique odd cotree component of $G-E(T)$.

Let $G_1$ and $G_2$ be two tightly two-face-embeddable graphs,
and let $e_i$ be an odd edge of $G_i$ for $i\in\{1,2\}$. We
construct a new graph $G$ by adding to $(G_1-e_1)\cup
(G_2-e_2)$ two independent edges $f_1$ and $f_2$, each joining
a $2$-valent vertex of $G-e_1$ to a $2$-valent vertex of
$G-e_2$, in such a way that $G$ becomes cubic. We say that $G$
is an \emph{odd $2$-sum} of $G_1$ and $G_2$ with respect to
$e_1$ and $e_2$. It is easy to see that $G$ is again
$2$-connected and that the two newly added edges form a
$2$-edge-cut of $G$. This cut will be referred to as the
\emph{principal} $2$-edge-cut of the $2$-sum.

\begin{theorem}\label{thm:tight2-sum}
An odd $2$-sum of two $2$-connected tightly two-face embeddable
cubic graphs is again $2$-connected and tightly two-face
embeddable.
\end{theorem}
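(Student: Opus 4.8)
The plan is to verify the three required properties of $G$ in turn — $2$-connectivity, odd Betti number, and the absence of a heavy Xuong tree — and then to read off tight two-face-embeddability from Theorem~\ref{thm:amply2-face}. Since $G_1$ and $G_2$ are $2$-connected cubic graphs they are loopless, so no loop subtleties (cf. Lemma~\ref{lm:loop-tightly}) arise. That $G$ is $2$-connected is routine: each $G_i-e_i$ is connected (a $2$-connected graph minus an edge), and for any two vertices of $G$ one produces two internally disjoint paths, rerouting through the principal cut $\{f_1,f_2\}$ whenever a path in one factor would have used the deleted edge $e_i$. The Betti number is a one-line count: $|V(G)|=|V(G_1)|+|V(G_2)|$ while $|E(G)|=|E(G_1)|+|E(G_2)|$, so $\beta(G)=\beta(G_1)+\beta(G_2)-1$, which is odd because each $\beta(G_i)$ is odd.

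Next I would show $G$ is two-face-embeddable by exhibiting a Xuong tree with a single odd cotree component. Because $e_i$ is an odd edge, $G_i$ has a light Xuong tree $T_i$ whose unique odd cotree component is the single edge $e_i$; by the cycle-removing procedure in the proof of Lemma~\ref{lm:acyclic} I may assume the cotree of $T_i$ is acyclic (those swaps touch only even cycle components and leave the single edge $e_i$ intact). Since $e_i\notin T_i$, each $T_i$ is a spanning tree of $G_i-e_i$, hence $T:=T_1\cup T_2\cup\{f_1\}$ is a spanning tree of $G$. Its cotree consists of the even components of the two factor cotrees together with the single crossing edge $f_2$. As $f_2$ is the only cotree edge crossing the principal cut, it merges at most one even component on each side into one path whose length is a sum of two even numbers and $1$, hence odd, while all other components remain even. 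Thus the cotree is acyclic with exactly one odd component, giving $\xi(G)=1$.

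The heart of the argument is tightness, which I would prove by contradiction. Suppose $G$ has a heavy Xuong tree $T$, taken by Lemma~\ref{lm:acyclic} with acyclic cotree, so its unique odd component $H$ is a path of length at least $3$. Since $\{f_1,f_2\}$ is a $2$-edge-cut, $T$ meets it in either one or two edges, and I would split on these cases. If exactly one of $f_1,f_2$ lies in $T$, restricting $T$ to each side yields spanning trees $T_1'$ of $G_1$ and $T_2'$ of $G_2$ with $e_1,e_2$ in their cotrees; the cotree components of $T$ are then those of $T_1',T_2'$ with $e_1,e_2$ deleted, the two truncated ends being spliced through $f_2$. If both $f_1,f_2\in T$, then one side — say the $G_1$-side — carries two tree-components, and reinstating $e_1$ makes $T_1'+e_1$ a spanning tree of $G_1$ whose cotree is exactly the $G_1$-part of the cotree of $T$. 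In every case the cotree of $G$ decomposes, up to one splice through $f_2$, into the factor cotrees with $e_i$ removed.

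Now comes the main obstacle: a parity bookkeeping that locates $H$ and forces a contradiction. The key arithmetic fact is that every spanning tree of a cubic graph with odd Betti number has an odd, hence positive, number of odd cotree components. Combining this with the hypothesis that $G$ has exactly one odd cotree component, I would analyse each possible position of $H$ — inside an untouched factor component, inside one of the two truncated halves of the component $B_i$ containing $e_i$, or inside the spliced $f_2$-component — and check, by tracking the parities of the two subpaths into which $e_i$ cuts $B_i$ on either side, that the single available odd component must come entirely from one factor. In that factor the induced tree then turns out to be a genuine Xuong tree with its unique odd component heavy (length $\ge 3$); the one degenerate configuration, in which $H$ is the spliced component, forces both $B_i$ to be light and so gives the splice length $1$, which cannot be the heavy $H$. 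Either way some factor $G_i$ acquires a heavy Xuong tree, contradicting its tight two-face-embeddability. Hence $G$ admits no heavy Xuong tree, and since $G$ is two-face-embeddable, Theorem~\ref{thm:amply2-face} yields that $G$ is tightly two-face-embeddable. I expect this case-by-parity analysis — showing that the lone odd component of $G$ cannot be split between the factors and must already be heavy inside one of them — to be the delicate step.
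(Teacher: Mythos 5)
Your proposal is correct and follows essentially the same route as the paper's own proof: the same spliced spanning tree $T_1\cup T_2\cup\{f_1\}$, with $f_2$ as the unique (singleton) odd cotree component, establishes two-face-embeddability, and tightness is proved by the same contradiction — splitting on whether a putative heavy Xuong tree contains one or both edges of the principal $2$-edge-cut, reinstating $e_i$ to obtain spanning trees of the factors, and combining the parity fact that the number of odd cotree components is congruent to $\beta\pmod 2$ with the tightness of $G_1$ and $G_2$ via Theorem~\ref{thm:amply2-face}. The case analysis you defer closes exactly as you predict (the paper writes out the same subcases), the only inaccuracy being your summary that ``either way some factor acquires a heavy Xuong tree'': a few subcases (e.g.\ both truncated halves odd, or $H$ meeting $e_1$ in the two-tree case) are killed purely by the odd-component count parity rather than by transferring a heavy Xuong tree, a distinction the paper itself glosses over in places.
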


\begin{proof}
Let $G_1$ and $G_2$ be tightly two-face-embeddable graphs with
odd edges $e_1$ and $e_2$, respectively, and let $G$ be an odd
$2$-sum of $G_1$ and $G_2$ with respect to $e_1$ and $e_2$. Let
$f_1$ and $f_2$ be the edges of the principal $2$-edge-cut of
$G$.

Take Xuong trees $T_1\subseteq G_1$ and $T_2\subseteq G_2$ for
which $e_1$ and $e_2$, respectively, form the corresponding odd
components. Observe that $(T_1\cup T_2)+f_1$ is a spanning tree
of $G$ and that $f_2$ constitutes the unique odd component of
the corresponding cotree. Thus $G$ is two-face-embeddable. To
prove that $G$ is tightly $2$-face-embeddable it remains to
show that for every Xuong tree $T$ of $G$ is light.

Suppose to the contrary that $G$ admits a Xuong tree $T$ such
that $G-E(T)$ has a heavy odd component, which we denote by
$H$. For $i\in\{1,2\}$ set $G_i'=G_i-e_i$ and $T_i=T\cap G_i'$.
Clearly, $T$ contains at least one of the edges $f_1$ and
$f_2$. Accordingly, we have two cases to consider.

\medskip\noindent
Case 1. \textit{The spanning tree $T$ contains exactly one edge
of the principal edge cut.} Without loss of generality we may
assume that $f_1$ is contained in $T$. It follows that
$T=T_1\cup\{f_1\}\cup T_2$ and both $T_1$ and $T_2$ are
spanning trees of $G_1$ and $G_2$, respectively. Now, if $H$
does not contain $f_2$, then $H$ is a heavy odd cotree component
with respect to either $T_1$ or $T_2$. However, this is
impossible because both $G_1$ and $G_2$ are tightly
upper-embeddable. Therefore $H$ contains $f_2$. For
$i\in\{1,2\}$ set $H_i=H\cap G_i'$. Since $H$ is odd and
contains $f_2$, both $H_1$ and $H_2$ have the same parity and
at least one of them is nonempty, say $H_1$. If $H_1$ is even,
then $H_1\cup\{e_1\}$ is a unique odd component of
$G_1-E(T_1)$, and is heavy, which is a contradiction. It
follows that both $H_1$ and $H_2$ are odd and consequently both
$T_1$ and $T_2$ have only even cotree components in $G_1$ and
$G_2$, respectively. This contradiction excludes Case~1.

\medskip\noindent
Case 2. \textit{The spanning tree $T$ contains both edges of
the principal edge cut.} In this case exactly one of $T_1$ and
$T_2$ is connected, say $T_1$. It follows that $T_1$ is a
spanning tree of $G_1$ and $T_2+e_2$ is a spanning tree of
$G_2$. Since $T$ contains both edges of the principal
$2$-edge-cut, each cotree component with respect to $T$ must be
contained either in $G_1'$ or in $G_2'$. If $H\subseteq G_2'$,
then $T_2+e_2$ would be a heavy Xuong tree of $G_2$, which is
impossible because $G_2$ is tightly two-face-embeddable.
Therefore $H\subseteq G_1'$. Now, $H$ cannot have a common
vertex with $e_1$, because $T_1$ would be a Xuong tree of $G_1$
with all cotree components even, one of them being $H\cup
\{e_1\}$. But if $H$ has no common vertex with $e_i$, then it
constitutes a heavy odd component of $G_1-E(T_1)$, which is
impossible because $G_1$ is tightly two-face-embeddable. Thus
Case~2 cannot happen as well, and the statement is proved.
\end{proof}

In certain cases, odd edges are not difficult to specify. For
instance, both the necklace $L_4$ and the graph of order $8$
obtained from the dipole $D_2$ by inserting a digon into each
edge are easily seen to be tightly two-face embeddable graphs.
It is also easy to see that each edge lying on a digon in any
of these two graphs is odd. Consulting
Figure~\ref{fig:c2c_counter} again we can conclude that in the
two basic graphs $F_1$ and $F_2$ of the family $\mathcal{F}$
each edge lying on a vertex-triangle is odd. Since
vertex-triangles are preserved by a diamond insertion and, by
Lemma~\ref{lm:light}, a tight spanning tree of the smaller
graph extends to a tight spanning tree of the larger one, we
conclude that every edge lying on a vertex-triangle of any
graph $F\in\mathcal{F}$ is also odd.

\section{Concluding remarks}

\begin{remark}
{\rm In Theorem~\ref{thm:amply2-face} we have characterised amply
two-face embeddable graphs as those which admit a Xuong tree
whose single odd cotree component has at least three edges.
Finding a similar
characterisation for amply one-face embeddable graphs -- or
even finding the ``right'' definition of ample one-face
embeddability that would be compatible with that for ample
two-face embeddability -- remains an open problem.}
\end{remark}

\begin{remark}
{\rm If we wish to prove that a given cubic graph $G$ is amply
two-face embeddable it is sufficient to find a heavy Xuong tree
in $G$. By contrast, the proof of
Theorem~\ref{thm:tight-family} suggests that to argue that $G$
is \emph{not} amply two-face-embeddable is not such an easy
task. The reason is that we do not have a tool similar to
Nebesk\'y's theorem, which can be efficiently used to prove
that a graph is not upper-embeddable. By
Equation~\eqref{eq:Nebesky}, a connected graph $G$ is
upper-embeddable if and only if
\begin{equation*}\label{eq:ue}
\text{ec}(G-X)+2\text{oc}(G-X)-2\le |X|
\end{equation*}
for each subset $X\subseteq E(G)$. In other words, to prove
that a connected cubic graph is not upper-embeddable it is
sufficient to identify a subset $Y\subseteq E(G)$ such that
$\text{ec}(G-Y)+2\text{oc}(G-Y)-2>|Y|$. In this context it is a
natural question to ask whether there exists a  function
$\alpha\colon E(G)\to\mathbb{Z}$ ``similar'' to the Nebesk\'y function
$\nu(X)= \text{ec}(G-X)+2\text{oc}(G-X)-2$
such that a connected cubic graph
is amply upper-embeddable (or at least amply
two-face-embeddable) if and only $\alpha(X)\le |X|$ for each
subset $X\subseteq E(G)$.}
\end{remark}

\begin{remark}
{\rm Theorem~\ref{thm:amply2-face} offers a natural question of
how amply and tightly upper-embeddable graphs are distributed
within the class of cubic graphs. In general, tightly
upper-embeddable cubic graphs are not easy to find. Moreover,
it does not seem easy to prove that a given cubic graph is
tightly upper-embeddable. We have shown that there exist
infinite families of tightly upper-embeddable cubic graphs with
connectivity $1$ and $2$. However, no examples of $3$-connected
tightly upper-embeddable graphs are known to us. This is why we
conjecture that a 3-connected cubic graph admits a coherent
decycling partition if and only if it is upper-embeddable
(Conjecture~\ref{conj}).

As far as cubic graphs with connectivity $2$ are concerned,
tight upper embeddability appears to be a rare event. This
indicates that among the upper-embeddable graphs those that are
tightly upper-embeddable should constitute a negligible part.
Therefore, if we take into account that almost all cubic graphs
are upper-embeddable and return to the equivalent language of
decycling partitions, it seems likely that
Problem~\ref{prob:as-amply} has a positive answer. In other
words, we make the following conjecture.}

\begin{conjecture}
{\rm Almost all cubic graphs contain an induced tree whose
removal leaves a subgraph with at most one edge. }
\end{conjecture}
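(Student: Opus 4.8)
The plan is to prove the conjecture by reducing it, through the structural theorems of Sections~\ref{sec:partitions} and~\ref{sec:ample}, to a single probabilistic input that is already available in the literature: the fact (Bau et al., combined with Theorem~\ref{thm:longren}) that almost all cubic graphs are upper-embeddable. First I would split according to the parity of the Betti number. When $n\equiv 2\pmod 4$ the graph is cyclically even, and by Theorem~\ref{thm:1-face} an induced tree whose removal leaves an independent set (hence no surplus edge at all) exists precisely when $G$ is one-face embeddable; since upper-embeddability coincides with one-face embeddability for cyclically even graphs, the conclusion is immediate in this case. The entire difficulty is therefore concentrated in the case $n\equiv 0\pmod 4$, where Theorem~\ref{thm:amply2-face} tells us that the required induced tree exists if and only if $G$ is \emph{amply} two-face embeddable, and mere upper-embeddability does not suffice.

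For $n\equiv 0\pmod 4$ I would pass from ample embeddability to the existence of a \emph{removable pair}. By the definition of ample upper-embeddability it suffices to combine two almost-sure facts: that $G$ is upper-embeddable (known), and that $G$ admits a simply adjacent, non-bridge pair $\{x,y\}$ with $G-\{x,y\}$ upper-embeddable. The graph $G-\{x,y\}$ is the edge-reduction $G_e$ of $G$ at $e=xy$ in the sense of Theorem~\ref{wormald}: one deletes $x$ and $y$ and rejoins their four outer neighbours into two new edges, obtaining a cubic graph on $n-2$ vertices with $\beta(G_e)=\beta(G)-1$, which is even. Thus $G_e$ being upper-embeddable means $G_e$ is one-face embeddable, $\{x,y\}$ is a removable pair, and $G$ is amply two-face embeddable, whence the desired induced tree exists by Theorem~\ref{thm:amply2-face}. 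The strategy is to show that almost surely \emph{some} such pair exists, by the averaging observation: if $e$ is a uniformly random edge chosen after $G$ and $G_e$ is upper-embeddable asymptotically almost surely, then
$$\mathbb{E}_G\!\left[\frac{\#\{\text{good edges}\}}{|E(G)|}\right]\to 1,$$
so by Markov's inequality the fraction of good edges tends to $1$ in probability and at least one good edge exists almost surely. As a random cubic graph is almost surely $3$-connected, almost every edge is simple and non-bridge, so these nuisance conditions do not affect the count.

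It thus remains to analyse a single pair $(G,e)$ with $G$ uniform random cubic on $n\equiv 0\pmod 4$ vertices and $e$ a uniform random edge. The heart of the argument is a \emph{contiguity lemma}: in the pairing (configuration) model, the distribution of the reduced graph $G_e$ on $n-2$ vertices is contiguous to that of the uniform random cubic graph on $n-2$ vertices. Granting this, the known statement that almost all cubic graphs on $n-2$ vertices are upper-embeddable transfers directly to $G_e$, completing the reduction above; the parities match exactly, so no further bookkeeping about the heavy odd cotree component is needed beyond the definition of a removable pair.

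The main obstacle is the contiguity lemma. Deleting two adjacent vertices and rejoining the dangling ends is a local surgery on the pairing model, but one must control the lower-order corrections: the two newly created edges may coincide, form a multi-edge, or close a short cycle, and the number of short cycles through a uniformly random edge must be bounded. I would handle this with the standard switching and small-subgraph-conditioning technology for random regular graphs, computing the ratio of the probability that $G_e$ equals a fixed cubic graph $H$ on $n-2$ vertices to the uniform probability of $H$, and showing this ratio is bounded above and below by absolute constants outside a negligible family of~$H$. An alternative, avoiding contiguity altogether, would be a direct second-moment estimate on the number of good edges; but the strong correlations between the events ``$G-\{x,y\}$ is one-face embeddable'' for distinct pairs make the variance delicate, so the contiguity route is preferable because it isolates the single genuinely probabilistic ingredient, almost-sure upper-embeddability, which is already known.
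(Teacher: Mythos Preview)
The statement you are addressing is presented in the paper as an open \emph{conjecture}; the paper offers no proof, so there is nothing to compare against. Your proposal is an attempt to settle an open problem, and it contains a substantive gap beyond the unproved contiguity lemma you already flag.

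You identify $G-\{x,y\}$ with the edge-reduction $G_e$, but these are different graphs: $G-\{x,y\}$ has four $2$-valent vertices and Betti number $\beta(G)-3$, whereas $G_e$ carries the two additional rewired edges $x_1x_2$ and $y_1y_2$ and has Betti number $\beta(G)-1$. The definition of a removable pair in Section~\ref{sec:ample} requires $G-\{x,y\}$, not $G_e$, to be upper-embeddable, and one-face embeddability of $G_e$ does not in general imply one-face embeddability of $G_e-x_1x_2-y_1y_2=G-\{x,y\}$. The only place in the paper where information about $G_e$ is converted into a coherent partition of $G$ is the proof of Theorem~\ref{thm:betterPS}, and that argument does not use bare one-face embeddability of $G_e$: it invokes Theorem~\ref{thm:Psvertex} to force a \emph{prescribed} endpoint of one of the rewired edges into the decycling set, and Theorem~\ref{thm:Psvertex} needs cyclic $4$-edge-connectivity of $G_e$, which fails with probability bounded away from~$1$ (see the discussion preceding Problem~\ref{prob:as-amply}, and the Remark after Theorem~\ref{wormald}). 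So even granting your contiguity lemma in full, you would still need either a direct argument that $G-\{x,y\}$ is almost surely upper-embeddable for a random adjacent pair, or a prescribed-vertex strengthening of Theorem~\ref{thm:1-face} valid for random (not necessarily cyclically $4$-edge-connected) cubic graphs. Neither is supplied, and either would be the real content of a proof.
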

\end{remark}

\section*{Acknowledgements}
\noindent{}The authors express their gratitude to the anonymous
referees for their careful reading and constructive
suggestions, and to J. Fiala and J. Karabáš for useful
comments.

\bibliographystyle{amsplain}
\bibliography{biblio_existence}

\providecommand{\bysame}{\leavevmode\hbox to3em{\hrulefill}\thinspace}
\providecommand{\MR}{\relax\ifhmode\unskip\space\fi MR }
\providecommand{\MRhref}[2]{%
  \href{http://www.ams.org/mathscinet-getitem?mr=#1}{#2}
}
\providecommand{\href}[2]{#2}
\begin{thebibliography}{10}

\bibitem{Bau_Worm}
S.~Bau, N.~C. Wormald, and S.~Zhou, \emph{Decycling numbers of random regular graphs}, Random Structures Algorithms \textbf{21} (2002), 397--413.

\bibitem{Bei_Vandel}
L.~W. Beineke and R.~C. Vandell, \emph{Decycling graphs}, J. Graph Theory \textbf{25} (1997), 59--77.

\bibitem{topics}
J.~Chen and Y.~Huang, \emph{Maximum genus}, Topics in Topological Graph Theory (L.~Beineke, R.~Wilson (in consultation~with J.~L.~Gross, and T.~W. Tucker), eds.), Encyclopedia of Mathematics and its Applications, Cambridge University Press, Cambridge, 2009, pp.~34--44.

\bibitem{FGM}
M.~L. Furst, J.~L. Gross, and L.~A. McGeoch, \emph{Finding a maximum-genus graph imbedding}, J. {ACM} \textbf{35} (1988), 523--534.

\bibitem{Garey_Johnson}
M.~R. Garey and D.~S. Johnson, \emph{Computers and intractability: {A} guide to the theory of {NP}-completeness}, W. H. Freeman, 1979.

\bibitem{garey_johnson_stock}
M.~R. Garey, D.~S. Johnson, and L.~J. Stockmeyer, \emph{Some simplified {NP}-complete graph problems}, Theoret. Comput. Sci. \textbf{1} (1976), 237--267.

\bibitem{GKMM12}
H.~Glover, K.~Kutnar, A.~Malni\v{c}, and D.~Maru\v{s}i\v{c}, \emph{Hamilton cycles in $(2,\text{\rm odd},3)$-{C}ayley graph}, Proc. Lond. Math. Soc. \textbf{104} (2012), 1171--1197.

\bibitem{GKM09}
H.~Glover, K.~Kutnar, and D.~Maru\v{s}i\v{c}, \emph{Hamiltonian cycles in cubic {C}ayley graphs: the {$\langle 2,4k,3\rangle$} case}, J. Algebraic Combin. \textbf{30} (2009), 447--475.

\bibitem{Glover2007}
H.~Glover and D.~Marušič, \emph{Hamiltonicity of cubic {C}ayley graphs}, J. Eur. Math. Soc. (JEMS) \textbf{9} (2007), 775--787.

\bibitem{G}
A.~D. Glukhov, \emph{A contribution to the theory of maximum genus of a graph}, Structure and Topological Properties of Graphs, Inst. Mat. Akad. Nauk Ukrain. SSR, Kiev, 1981, pp.~15--29 (Russian).

\bibitem{HuangLiu}
Y.~Huang and Y.~Liu, \emph{Maximum genus and maximum nonseparating independent set of a $3$-regular graph}, Discrete Math. \textbf{176} (1997), 149--158.

\bibitem{Luczak}
S.~Janson, T.~Luczak, and A.~Rucinski, \emph{Random graphs}, John Wiley \& Sons, 2000.

\bibitem{jung}
M.~Jungerman, \emph{A characterization of upper-embeddable graphs}, Trans. Amer. Math. Soc. (1978), 401--406.

\bibitem{kardos_computer}
F.~Kardo\v{s}, \emph{A computer-assisted proof of the {B}arnette-{G}oodey conjecture: Not only fullerene graphs are {H}amiltonian}, {SIAM} J. Discrete Math. \textbf{34} (2020), 62--100.

\bibitem{Karp72}
R.~M. Karp, \emph{Reducibility among combinatorial problems}, Proceedings of a symposium on the Complexity of Computer Computations, held March 20-22, 1972, at the {IBM} Thomas J. Watson Research Center, Yorktown Heights, New York, {USA} (Raymond~E. Miller and James~W. Thatcher, eds.), The {IBM} Research Symposia Series, Plenum Press, New York, 1972, pp.~85--103.

\bibitem{KG}
N.~P. Khomenko and A.~D. Glukhov, \emph{Single-component $2$-cell embeddings and maximum genus of a graph}, Some topological and combinatorial properties of graphs, Inst. Mat. Akad. Nauk Ukrain. SSR, Kiev, 1980, pp.~5--23 (Russian).

\bibitem{KOK}
N.~P. Khomenko, N.~A. Ostroverkhy, and V.~A. Kuzmenko, \emph{{The maximum genus of a graph}}, {$\phi$}-Transformations of Graphs, Inst. Mat. Akad. Nauk Ukrain. SSR, Kiev, 1973, pp.~180--207 (Ukrainian, English summary).

\bibitem{kirchhoff}
G.~Kirchhoff, \emph{\"{U}ber die {A}ufl\"{o}sung der {G}leichungen, auf welche man bei der {U}ntersuchung der linearen {V}erteilung galvanischer {S}tr\"{o}me gef\"{u}hrt wird}, Ann. Phys. Chem. \textbf{72} (1847), 497--508.

\bibitem{kotzig:1957}
A.~Kotzig, \emph{{From the theory of finite regular graphs of degree three and four}}, \v{C}asopis p\v{e}stov. mat. \textbf{82} (1957), 76--92 (Slovak).

\bibitem{KUTNAR20095491}
K.~Kutnar and D.~Marušič, \emph{Hamilton cycles and paths in vertex-transitive graphs---{C}urrent directions}, Discrete Math. \textbf{309} (2009), 5491--5500.

\bibitem{liu_li}
D.~Li and Y.~Liu, \emph{A polynomial algorithm for finding the minimum feedback vertex set of a $3$-regular simple graph}, Acta Math. Sci. Ser. B \textbf{19} (1999), 375--381.

\bibitem{liang}
Y.~D. Liang, \emph{On the feedback vertex set problem in permutation graphs}, Inform. Process. Lett. \textbf{52} (1994), 123--129.

\bibitem{Long_Ren}
S.~Long and H.~Ren, \emph{The decycling number and maximum genus of cubic graphs}, J. Graph Theory \textbf{88} (2018), 375--384.

\bibitem{MCCUAIG199216}
W.~McCuaig, \emph{Edge reductions in cyclically k-connected cubic graphs}, J. Combin. Theory Ser. B \textbf{56} (1992), 16--44.

\bibitem{MoTh}
B.~Mohar and C.~Thomassen, \emph{Graphs on surfaces}, Johns Hopkins Series in the Mathematical Sciences, Johns Hopkins University Press, 2001.

\bibitem{nebesky83}
L.~Nebesk\'{y}, \emph{A note on upper embeddable graphs}, Czechoslovak Math. J. \textbf{33} (1983), 37–40.

\bibitem{nebesky2}
L.~Nebeský, \emph{A new characterization of the maximum genus of a graph}, Czechoslovak Math. J. \textbf{31} (1981), 604--613.

\bibitem{atom}
R.~Nedela and M.~\v{S}koviera, \emph{Atoms of cyclic connectivity in cubic graphs}, Mathematica Slovaca \textbf{45} (1995), 481--499.

\bibitem{nedela}
\bysame, \emph{Cyclic connectivity, edge-elimination, and the twisted {I}saacs graphs}, J. Combin. Theory Ser. B \textbf{155} (2022), 17--44.

\bibitem{Oum}
S.~Oum, \emph{Perfect matchings in claw-free cubic graphs}, Electron. J. Combin. \textbf{18} (2011), $\#$62.

\bibitem{Palmer}
E.~M. Palmer, R.~C. Read, and R.~W. Robinson, \emph{Counting claw-free cubic graphs}, {SIAM} J. Discrete Math. \textbf{16} (2002), 65--73.

\bibitem{PS}
C.~Payan and M.~Sakarovitch, \emph{{Ensembles cycliquement stables et graphes cubiques}}, Cahiers du Centre d'Etudes de Recherche Operationelle \textbf{17} (1975), 319--343 (French).

\bibitem{PX}
C.~Payan and N.~H. Xuong, \emph{Upper embeddability and connectivity of graphs}, Discrete Math. \textbf{27} (1979), 71--80.

\bibitem{R}
G.~Ringel, \emph{The combinatorial map color theorem}, J. Graph Theory \textbf{1} (1977), 141--155.

\bibitem{RW}
R.~W. Robinson and N.~C. Wormald, \emph{Almost all regular graphs are hamiltonian}, Random Structures Algorithms \textbf{5} (1994), 363--374.

\bibitem{Speck}
E.~Speckenmeyer, \emph{On feedback vertex sets and nonseparating independent sets in cubic graphs}, J. Graph Theory \textbf{12} (1988), 405--412.

\bibitem{S}
S.~Stahl, \emph{Generalized embedding schemes}, J. Graph Theory \textbf{2} (1978), 41--52.

\bibitem{U_K_Got}
S.~Ueno, Y.~Kajitani, and S.~Gotoh, \emph{On the nonseparating independent set problem and feedback vertex set problem for graphs with no vertex degree exceeding three}, Discrete Math. \textbf{72} (1988), 355--360.

\bibitem{White}
A.~T. White, \emph{Graphs of groups on surfaces}, Interactions and Models, North Holland, 2001.

\bibitem{wormald}
N.~C. Wormald, \emph{Classifying $k$-connected cubic graphs}, Lecture Notes in Math. \textbf{748} (1979), 199--206.

\bibitem{xuong1}
N.~H. Xuong, \emph{How to determine the maximum genus of a graph}, J. Combin. Theory Ser. {B} \textbf{26} (1979), 217--225.

\end{thebibliography}

\end{document}